\documentclass[12pt]{amsart}

\usepackage{amsthm}
\newtheorem{theorem}{Theorem}[section]
\newtheorem{lemma}[theorem]{Lemma}
\newtheorem{proposition}[theorem]{Proposition}
\theoremstyle{definition}
\newtheorem{definition}[theorem]{Definition}

\theoremstyle{remark}
\newtheorem{remark}[theorem]{Remark}

\counterwithin*{section}{part}

\usepackage{amssymb}
\usepackage{empheq}
\usepackage{stmaryrd}
\usepackage{enumerate}
\usepackage[shortlabels]{enumitem}
\usepackage{calc}
\usepackage{url}
\usepackage{comment}
\usepackage{hyperref}
\usepackage{tikz}
\usetikzlibrary{calc}

\usepackage{mathtools}

\def\PP{\mathbb{P}}
\def\RR{\mathbb{R}}

\def\NN{\mathbb{N}}

\newcommand{\WEHAomega}{\mathsf{WE}\mbox{-}\mathsf{HA}^\omega}
\newcommand{\WEPAomega}{\mathsf{WE}\mbox{-}\mathsf{PA}^\omega}
\newcommand{\QFAC}{\mathsf{QF}\mbox{-}\mathsf{AC}}
\newcommand{\QFER}{\mathsf{QF}\mbox{-}\mathsf{ER}}
\newcommand{\DC}{\mathsf{DC}}
\newcommand{\AC}{\mathsf{AC}}
\newcommand{\IPU}{\mathsf{IP}^\omega_\forall}
\newcommand{\IPEF}{\mathsf{IP}^\omega_{ef}}
\newcommand{\MARK}{\mathsf{M}^\omega}
\newcommand{\Algebra}{S}
\newcommand{\UBOmega}[1]{\exists\text{-}\mathsf{UB}^{\Omega,#1}}
\newcommand{\UBF}[1]{\exists\text{-}\mathsf{UB}^{\Algebra,#1}}
\newcommand{\FullUBOmega}[1]{\mathsf{UB}^{\Omega,#1}}
\newcommand{\FullUBF}[1]{\mathsf{UB}^{\Algebra,#1}}
\newcommand{\CCOmega}[1]{\mathsf{QF}\text{-}\mathsf{CC}^{\Omega,#1}}
\newcommand{\CCF}[1]{\mathsf{QF}\text{-}\mathsf{CC}^{\Algebra,#1}}

\newenvironment{claim}[1]{\par\noindent\underline{Claim:}\space#1}{}
\newenvironment{claimproof}[1]{\par\noindent\underline{Proof:}\space#1}{\leavevmode\unskip\penalty9999 \hbox{}\nobreak\hfill\quad\hbox{$\blacksquare$}}

\usepackage[left=2.0cm,%
                right=2.0cm,%
                top=2.5cm,%
                bottom=3.5cm,%
                headheight=12pt,%
                a4paper]{geometry}%

\begin{document}

\title[A systematic way of analysing proofs in probability theory]{A systematic way of analysing proofs in probability theory}

\author[M. Neri, P. Oliva, N. Pischke]{Morenikeji Neri${}^{\MakeLowercase a}$, Paulo Oliva${}^{\MakeLowercase b}$, Nicholas Pischke${}^{\MakeLowercase c}$}
\date{\today}
\maketitle
\vspace*{-5mm}
\begin{center}
{\scriptsize 
${}^a$ Department of Mathematics, Technische Universit\"at Darmstadt,\\
Schlossgartenstra\ss{}e 7, 64289 Darmstadt, Germany,\\ 
${}^b$ School of Electronic Engineering and Computer Science, Queen Mary University of London,\\
Mile End Road, London, E1 4NS, United Kingdom,\\
${}^c$ Department of Computer Science, University of Bath,\\
Claverton Down, Bath, BA2 7AY, United Kingdom,\\
E-mails: neri@mathematik.tu-darmstadt.de, p.oliva@qmul.ac.uk, nnp39@bath.ac.uk}
\end{center}

\begin{abstract}
Over extended systems of finite type arithmetic, we utilize a formal representation of the outer measure to define a translation which allows for the systematic formalization of probabilistic statements. As a main result, this translation gives rise to novel \emph{probabilistic logical metatheorems} in the style of proof mining, guaranteeing the extractability of computable bounds from (non-effective) proofs of probabilistic existence statements. We further show how the set-theoretically false principle of uniform boundedness due to Kohlenbach can be used to replicate logically strong continuity properties of probability measures in the context of these bound extraction theorems in a tame way, i.e.\ without affecting the computational complexity of the resulting bounds in question, all the while guaranteeing the validity of those bounds even over finitely additive probability spaces. This in particular provides a formal perspective on the elimination of the principle of $\sigma$-additivity during bound extraction, as previously only observed ad hoc in the practice of proof mining. In that context, we for the first time provide a proof-theoretic treatment of higher-type uniform boundedness principles and related contra-collection principles via Kohlenbach's monotone variant of G\"odel's functional interpretation, which is of independent interest. All together, these new metatheorems provide a systematic proof-theoretic approach towards extracting various types of quantitative information for probabilistic theorems considered in the literature, justifying a range of recent applications to probability theory and stochastic optimization. This paper represents a major logical contribution to a recent advance of bringing the methods of proof mining to bear on probability theory, significantly extending previous work by the first and third author [Forum Math.\ Sigma, 13, e187 (2025)] in that direction.
\end{abstract}
\noindent
{\bf Keywords:} Proof mining; Metatheorems; Probability theory; Outer measure; Finitely additive measures.\\ 
{\bf MSC2020 Classification:} 03F10, 03F35, 28A12, 60A10 

\maketitle 

\section{Introduction}

\subsection{Motivation and background}

The nature and description of the computational content of a mathematical theorem is one of the fundamental questions in proof theory, and the development of tools and frameworks to exhibit and categorize this content is a key motivation behind many of the modern developments in the field. In the present paper, we are concerned with the computational content of theorems that are of a probabilistic nature, i.e.\ which, instead of making absolute statements, make \emph{almost sure} statements.

Indeed, many theorems from probability theory and statistics, including their most striking results, often take the form of almost sure theorems, asserting that a given property holds with probability one. Examples range from convergence results for sampling-based algorithms in statistics and operations research to zero-one laws, which state that a property can only hold almost always or almost never, with nothing in between. Beyond their qualitative content, such theorems, each in their own right, can clearly be made to offer a related computational challenge. This challenge commonly takes the form of asking whether they can be strengthened quantitatively, showing that an approximate version of the property in question holds up to a chosen degree of probability. This transforms what might seem like a computationally empty statement, that is, the property in question holding with probability one, into a statement with a meaningful computational interpretation, such that the complexity of the related computational information yields a sensible measure of the internal complexity of those statements and, in particular, their proofs.

To illustrate the above with a more precise example, we turn to almost sure convergence results, perhaps the most central and prevalent of such examples due to their ubiquity in statistics and stochastic approximation. Concretely, if $(X_n)_{n \in \mathbb{N}}$ is a stochastic process on a probability space $(\Omega, \mathcal{F}, \PP)$, one can show that the probabilistic statement
\[
\mbox{``$(X_n)_{n \in \mathbb{N}}$ converges with probability one''}
\]
is equivalent to (see e.g.\ \cite{NeriPowell2025}) the property
\[
\forall \varepsilon, \lambda >0\ \exists N \in \NN \left(\PP(\forall i,j \ge N\left(|X_i-X_j|\leq \varepsilon\right))>1-\lambda\right).
\]
A function $\Phi(\varepsilon, \lambda)$ providing a bound (and hence a witness) for the $N$ in the above expression is exactly what is isolated in the probability-theoretic literature as a \emph{rate of almost sure convergence}. From a logical point of view, analogous to rates of convergence for deterministic sequences, such rates are generally only effectively derivable in special circumstances, such as (but not limited to) semi-constructive contexts (see e.g.\ the discussion in \cite{Kohlenbach2008}). Correspondingly, there are further equivalent phrasings of almost sure convergence which give rise to quantitatively weaker readings, such as \emph{metastable rates of almost sure convergence} introduced by Avigad, Dean, and Rute \cite{AvigadDeanRute2012} (extending the work of Tao \cite{Tao2008b}), that is, functions $\Phi(\varepsilon, \lambda, g)$ bounding the $N$ in the statement
\[
\forall \varepsilon, \lambda >0\ \forall g:\mathbb{N}\to\mathbb{N}\ \exists N \in \NN \left(\PP(\exists n\leq N\ \forall i,j \in [n;n+g(n)]\left(|X_i-X_j|\leq\varepsilon\right))>1-\lambda\right).
\]
These can be seen as probabilistic generalizations of ordinary rates of metastability in the sense of Tao \cite{Tao2008}, which over the years have become a central way to assign a computational meaning to deterministic convergence statements in the absence of computable rates of convergence. Indeed, contrary to the above rates of almost sure convergence, these metastable rates (and also more uniform variants as discussed in detail later) are widely available for large classes of classically proven almost sure convergence theorems (as will, in fact, be formally justified for the first time in this paper) and they have been, and continue to be, obtained for major theorems such as the dominated convergence theorem \cite{AvigadDeanRute2012}, the martingale convergence theorem \cite{NeriPowell2025} and the Robbins-Siegmund theorem \cite{NeriPowell2024}, as well as further applied results from stochastic approximation theory \cite{NeriPischkePowell2026}, among others. 

Such computational considerations are certainly not isolated to convergence theorems, with further examples being discussed later on. The fundamental question of the present paper is then how such probabilistic statements can be formally recognized in the context of logical systems that facilitate the extraction of computational content from proofs, and in particular how the extractability of computational information commonly associated with such statements, as illustrated above, can be guaranteed therein.

\subsection{The proof mining program and probability theory}

The approach of this paper is based on the formal systems and methods employed in the proof mining program. This research program originates with the work of Kohlenbach in the 1990s, systematizing and expanding the ideas of Kreisel's unwinding of proofs \cite{Kreisel1951,Kreisel1952}, and aims at providing the computational content of theorems from mathematics by analysing their (prima facie) non-effective proofs as they are found in the usual literature. While applied in nature, this enterprise is firmly logically supported and substantiated through a range of proof-theoretic results called \emph{logical metatheorems}, mainly based on central proof-theoretic machinery like  G\"odel's functional interpretation \cite{Goedel1958}, Kreisel's modified realizability interpretation \cite{Kreisel1959}, as well as Howard's majorizability \cite{Howard1973}, and their variants. Concretely, the logical metatheorems used in proof mining are theorems associated with a logical system at hand such that
\begin{enumerate}
\item the system is suitably designed so that it allows for the formalization of large classes of objects and proofs from the respective area of application,
\item the associated logical metatheorem then guarantees that from large classes of proofs carried out in this system, potentially involving a wide range of non-computational principles, one can extract effective and highly uniform computational information for the respective theorem.
\end{enumerate}
Substantiated and supported by these metatheorems, proof mining has now led to hundreds of new applications in core mathematics. We refer to the central monograph \cite{Kohlenbach2008} for a comprehensive overview of the field, as well as to the surveys \cite{Kohlenbach2019,KohlenbachOliva2003} for further references and discussions.

This logical machinery was recently extended to probability theory by the first and third author in \cite{NeriPischke2025}, where corresponding systems and logical metatheorems on bound extraction were presented. While discussed in more detail later on, we already want to highlight here that these systems follow an abstract approach to both the sample and event space, treating them in particular as bounded spaces (we refer to \cite{NeriPischke2025} for further discussions on that point, which is of fundamental importance to this enterprise), and that crucially, the main system places strong emphasis on so-called probability contents, that is finitely additive probability measures (see in particular \cite{RaoRao1983}), as the right underlying notion for extending proof mining methods to this area. Concretely, probability contents are functions $\PP:\Algebra\to [0,1]$ on an algebra of sets $\Algebra$ over a base set $\Omega$ (that is $\emptyset\in \Algebra$ and $\Algebra$ is closed under complements and finite unions) which satisfy $\PP(\Omega)=1$ and are additive, i.e.\ $\PP(A\cup B)=\PP(A)+\PP(B)$ for disjoint $A,B\in\Algebra$. An algebra $\Algebra$ becomes a $\sigma$-algebra if it is closed under countable unions and over such a $\sigma$-algebra, $\PP$ becomes a probability measure if it is additionally $\sigma$-additive, i.e.\ $\PP\left(\bigcup_{i\in\mathbb{N}}A_i\right)=\sum_{i\in\mathbb{N}}\PP(A_i)$ for a disjoint collection $(A_i)_{i\in\mathbb{N}}\subseteq \Algebra$. 

In parallel to and since the work \cite{NeriPischke2025}, various case studies of proof mining in probability theory have been carried out, which include, beyond those related to probabilistic convergence theorems mentioned before, also work on strong laws of large numbers \cite{Neri2024,Neri2023,Neri2025a} and ergodic theory \cite{Neri2025b}, on stochastic optimization \cite{NeriPischkePowell2025a,Pischke2025,Pischke2026,PischkePowell2024} and on zero-one laws \cite{PowellWan2025}. 

\subsection{The contributions of this paper}

The main system introduced in \cite{NeriPischke2025} ``only'' axiomatises probability contents. It is already argued therein that this restriction does not appear to be a real limitation on the applicability of the system, as it seemed sufficient for essentially all applications of that approach developed at the time of its release. These were however quite limited, and further developments (in particular the case studies mentioned above) quickly showed that the applicability of this system is a subtle issue, not least through the system's restricted vocabulary, which required substantial additional work tailored ad hoc to each such case study. The fundamental question thereby arose of how theorems and their proofs, coming from ordinary probability theory, can be systematically recognized in the language of that system to render these metatheorems applicable and to hence guarantee the extractability of related quantitative information. Or, in other words: How can the property (1) of logical metatheorems highlighted above be justified for the systems presented in \cite{NeriPischke2025}?

In this paper, we provide an answer to this question by presenting a systematic approach to the general task of assigning a computational meaning, both classically and constructively, to probabilistic statements which covers the previously discussed examples, as well as many more, in a uniform manner, and moreover facilitates a similarly systematic approach for the extraction of that content. In particular, the results of this paper substantiate the previously mentioned case studies released since as applications of this general logical approach initiated in \cite{NeriPischke2025}. Hence, while first proposed \cite{NeriPischke2025}, it is only through the present work that we can now really recognize the systematic applicability and suitability of this approach via the tame theory of probability contents.

More concretely, the present paper proceeds as follows: In the language of those systems from \cite{NeriPischke2025}, we utilize a formal representation of the outer measure to define a translation which allows for the systematic formalization of probabilistic statements. In its most special case, this translation associates to any formula $\varphi$ another formula $\PP[\varphi]=1$ which expresses that $\varphi$ holds almost surely. This translation can then be combined with the various proof-theoretic tools employed previously to establish the logical metatheorems for the systems for contents in \cite{NeriPischke2025} to associate with that derived statement $\PP[\varphi]=1$ a precise computational meaning.

In particular, this gives rise to dedicated new \emph{probabilistic} logical metatheorems for the extraction of computational information from proofs of probabilistic theorems, extending the \emph{basic} logical metatheorems previously derived in \cite{NeriPischke2025}. In similarity to the bound extraction theorems from \cite{NeriPischke2025}, also these new metatheorems presented here guarantee a high degree of uniformity of the extractable bounds, in the sense that they will be independent of all parameters relating to the underlying probability space, and in particular of the measure. Establishing some of these new metatheorems is highly non-trivial, and makes crucial use of essentially all known techniques and devices from the proof mining program, in particular of extensions of a set-theoretically false principle of uniform boundedness introduced by Kohlenbach \cite{Kohlenbach2006} as further discussed below. Throughout, we also simultaneously focus both on metatheorems tailored to classical as well as constructive reasoning, the latter of which are considered in the context of probability theory here for the first time and are derived by following the work by Gerhardy and Kohlenbach \cite{GerhardyKohlenbach2006}. We will then in particular discuss how these probabilistic metatheorems give rise to various well-known quantitative notions from probability theory, covering the examples and case studies mentioned before, and thereby illustrating that the new approach of this paper is in particular practically natural and useful. 

All of these considerations are, in particular, made over the systems for probability contents which guarantees that the resulting extracted bounds will be valid in that context. This has particular consequences in the light of our second set of contributions, where we provide an algebra for expressions of the form $\PP[\varphi]=1$ based on the logical structure, and in particular the quantifier structure, of $\varphi$. In that context, under the use of a principle of uniform boundedness due to Kohlenbach \cite{Kohlenbach1999,Kohlenbach2006}, we show that the quantifiers internal to $\varphi$ can be, in some cases, ``pulled out'' of the expression $\PP[\varphi]=1$. These prenexations of probabilistic statements are intimately related to continuity principles of the set-function, and in general only valid in the context of probability measures. However, as the aforementioned uniform boundedness principle is admissible in the context of the usual logical metatheorems employed in proof mining (such as those developed in this paper), even though it is generally set-theoretically false, these prenexation results may be used in proofs, while still guaranteeing that the resulting bounds are true for general probability contents. In that way, we illustrate how uniform boundedness principles can be employed to use facts about probability measures in proofs for probability contents, highlighting great potential importance of such principles for the future of proof mining and probability. For that reason, all the logical metatheorems discussed in the present paper accommodate such uniform boundedness principles explicitly. In that context, we in particular for the first time provide a proof-theoretic treatment of these uniform boundedness principles in higher types, which allow for such prenexation principles even for uncountable quantifiers, which in that way are false even for probability measures. To accomodate such results also in a semi-constructive context, we not only rely on the principle of uniform boundedness, but also on the dual so-called ``contra-collection'' principle, which prominently features in the context of the bounded functional interpretation of the second author and Ferreira \cite{FerreiraOliva2005}, and is treated here for the first time in the context of the monotone functional interpretation. These results, extending the reach of the monotone functional interpretation, are in particular of idenpendent interest even outside applications to probability theory. Moreover, as hinted on before, we want to emphasize that these principles feature crucially in our approach to some of the new probabilistic metatheorems. Lastly, we provide a type of conservativity result for the systems considered here regarding the principle of $\sigma$-additivity. Concretely, we show that in a classical context, $\sigma$-additivity can actually be derived from the uniform boundedness principles, which provides a formal perspective on the elimination of $\sigma$-additivity during bound extraction in proof mining, as was previously only observed ad hoc in practice. All these results in particular further affirm the suitability of probability contents, not measures, as the fundamental measure-theoretic notion in proof mining and probability theory, as mentioned before.

\subsection{Related and future work}

Beyond previously mentioned contributions, we of course want to highlight that present work will allow for the development of many further applications of proof mining to probability theory. Indeed, some of the previously mentioned case studies (see in particular \cite{Neri2025a,Neri2025b,NeriPischkePowell2025a,NeriPischkePowell2026,Pischke2025,Pischke2026}) were developed in tandem with the present work, where the logical perspectives developed here were crucial in guiding the formulation of quantitative variants of probabilistic statements and the extraction of the respective computational content. In the context of \cite{NeriPischkePowell2026} in particular, the present logical approach was crucial for developing the associated notion of a finitary martingale, as well as the associated quantitative theory. Further applications stemming from the present work, in particular to stochastic optimization, are already underway. Aside from those concrete applications, we want to mention two avenues for future work stemming from this paper.

At first, we want to highlight that the boundedness of the sample and event space as well as their associated uniform boundedness and contra-collection principles play a crucial role in this work. While not discussed here in any great detail, since we generally follow the formal setup of \cite{NeriPischke2025}, this highlights the very interesting question for future work of whether the present topic can benefit from the use of the \emph{bounded functional interpretation} due to Ferreira and the second author \cite{FerreiraOliva2005} (see also \cite{EngraciaFerreira2020,Ferreira2009}), a proof interpretation inspired by the paradigm of the monotone functional interpretation of Kohlenbach \cite{Kohlenbach1996b} (and conceptually already going back to \cite{Kohlenbach1992}) but where such aspects are dealt with more fundamentally, in particular the contra-collection principle.

Similarly, while also not discussed here in any further detail, we want to mention that while the present results are formulated as tailored to the proof-theoretic framework from \cite{NeriPischke2025}, various ideas of the present paper could perhaps also be used in the context of model-theoretic approaches to uniform bounds, in particular as, e.g., articulated in the context of positive bounded logic \cite{AvigadIovino2013,DuenezIovino2017} (see also \cite{NeriPischke2025} for a more detailed discussion on the differences and similarities of the present proof-theoretic methodology to such results from model theory). In particular, the general translation induced by the outer measure, mapping formulas to probabilistic formulas, could be used in such a context to broaden the applicability of the related model-theoretic results on the existence of uniform bounds, as it can be phrased in the language of positive bounded logic. In any case however, we want to highlight that these model-theoretic results can only guarantee existence of such uniform bounds and, in difference to the proof-theoretic approach, do not offer considerations on complexity or an approach to extracting them from proofs.

Further, it would be very interesting to compare the present use of the outer measure to similar such uses in model theory (see e.g.\ \cite{Keisler1987}), and especially to probability quantifiers (see in particular \cite{Keisler1985}, among many others) or probabilistic logic (see in particular \cite{ScottKrauss1966}). However, we want to emphasize that the goal of the present paper is quite distinct from these and related works, and is particularly rooted in the perspective of (applied) proof theory, where such a notion is, to the best of our knowledge, employed for the first time in the present paper.

Lastly, it would be interesting to consider the connection between uniform boundedness principles and continuity theorems for probability measures, as developed in this paper, also from a model-theoretic perspective, in particular in light of the observation that uniform boundedness represents a sort-of proof-theoretic version of saturation for ultraproducts in model theory, as e.g.\ centrally discussed in \cite{GuenzelKohlenbach2016} where many such applications of saturation are shown to be captured from a proof-theoretic perspective via uniform boundedness. In particular, it would be interesting to investigate the relationship between the present uses of uniform boundedness in probability theory and the construction of the Loeb measure in nonstandard analysis (see the seminal work \cite{Loeb1975} as well as \cite{DuenezIovino2017}), where saturation is similarly employed to extend finitely additive measures to $\sigma$-additive ones, and hence used to establish continuity properties. Investigations on this last aspect are already underway.

\subsection{The outline of the paper}

The paper is now organised as follows: Section \ref{sec:prelim} provides the necessary details on the systems from \cite{NeriPischke2025} as well as the central proof-theoretic devices used in this paper and the associated \emph{basic} logical metatheorems, in particular their extensions to incorporate higher-type uniform boundedness and contra-collection. Section \ref{sec:trans} motivates and introduces our general translation based on a formal representation of the outer measure in the language of the systems from \cite{NeriPischke2025}. Section \ref{sec:algebra} provides the various results on the algebra of our translation, including the prenexation results based on the principles of uniform boundedness and contra-collection. Moreover, in that section we highlight in particular how the uniform boundedness principles provide a formal perspective on the elimination of $\sigma$-additivity from proofs in the course of a proof-theoretic analysis. Section \ref{sec:meta} then provides the general discussion of the computational content of probabilistic statements as suggested by our translation, resulting in metatheorems specially tailored to these probabilistic circumstances. The final Section \ref{sec:examples} discusses how these respective probabilistic logical metatheorems proved earlier give rise to various natural quantitative notions from the literature, and beyond, relating them in particular to the previously mentioned case studies.

\section{The formal framework}\label{sec:prelim}

The present section serves effectively as preliminaries to the present paper, laying out the essential details of the formal approach towards proof mining and probability theory developed in \cite{NeriPischke2025}, but also develops crucial new material like further variations arising by considering semi-constructive versions and extensions via uniform boundedness and contra-collection principles, as required here. While we will only rely on rather little probability theory, which we will introduce as needed, we refer to \cite{Klenke2008} for a general reference on probability theory and to \cite{RaoRao1983} for the central reference for the theory of contents.

\subsection{Systems for probability contents}

We begin with the essential details of the formal approach towards a system geared for bound extractions from proofs in the theory of probability contents, as introduced in \cite{NeriPischke2025}. The main system defined for that purpose in \cite{NeriPischke2025} is the system $\mathcal{F}^\omega[\PP]$ which, as common for logical systems employed in the proof mining program, arises as an extension of systems of finite type arithmetic.

Concretely, as in \cite{Kohlenbach2008} and \cite{Troelstra1973}, let $\WEPAomega$ be a weakly extensional variant of Peano arithmetic over all \emph{finite types} $T$ defined by
\[
0\in T \quad \mbox{and} \quad \mbox{if $\rho,\tau\in T$ then $\rho(\tau)\in T$}.
\]
We denote the \emph{pure types} using natural numbers, by recursively setting $n+1:=0(n)$. The only primitive relation symbol of $\WEPAomega$ is $=_0$, representing equality at type $0$. In particular, equality at higher types is defined via the abbreviation
\[
x^{\tau(\xi)}=_{\tau(\xi)}y^{\tau(\xi)} \;\; := \;\; \forall z^{\xi}\left( xz=_\tau yz\right)
\] 
and for higher type inequalities, we similarly set $x^{\tau(\xi)}\geq_{\tau(\xi)}y^{\tau(\xi)}:=\forall z^{\xi}\left( xz\geq_\tau yz\right)$.\footnote{Note that $\geq_0$ is definable from the only primitive relation $=_0$ using cut-off subtraction.} Besides equality at type $0$, the system contains constants for successor and zero, constants for the combinators of Sch\"onfinkel \cite{Schoenfinkel1924} allowing for lambda abstraction, and lastly, constants allowing for simultaneous primitive recursion in the sense of G\"odel \cite{Goedel1958} and Hilbert \cite{Hilbert1926}, all with their corresponding natural defining axioms. We do not spell out these constants or axioms as they do not feature explicitly in this work. Instead, we refer the reader to \cite{Kohlenbach2008}.

Furthermore, as opposed to the full axiom of extensionality, the system crucially only contains the following \emph{quantifier-free extensionality rule}
\[
  \frac{\varphi_0\to s=_\rho t}{\varphi_0\to r[s/x^\rho]=_\tau r[t/x^\rho]}\tag{$\QFER$}
\]
where $\varphi_0$ is a quantifier-free formula, $s$ and $t$ are terms of type $\rho$ and $r$ is a term of type $\tau$. We refer the reader to \cite{Kohlenbach2008} for further discussions on the (necessary) restrictions on extensionally in systems amenable to program extraction.

Over $\WEPAomega$ and its extensions, we rely on a representation of real numbers as fast converging Cauchy sequences of rational numbers with a fixed Cauchy modulus $2^{-n}$ (see Chapter 4 in \cite{Kohlenbach2008} for details), that is as objects of type $1=0(0)$. We will not need any precise details of this representation and just recall that in that context, the usual arithmetical operations like $+_\mathbb{R}$, $\cdot_\mathbb{R}$, $\vert\cdot\vert_\mathbb{R}$, etc., are primitive recursively definable through closed terms and the relations $=_\mathbb{R}$ and $<_\mathbb{R}$, etc., are representable via formulas of the underlying language. Furthermore, these relations are not decidable but are given by $\Pi^0_1$- and $\Sigma^0_1$-formulas, respectively. We generally drop subscripts from arithmetical operations and often also reduce typing in the notation throughout for readability.

Instead of relying on representations of probability content spaces in finite type arithmetic, which would come with all sorts of additional questions and complications, the central approach used in \cite{NeriPischke2025} is to represent the sample and event space using new, additional abstract types, following the central modern paradigm of proof mining developed by Kohlenbach, starting with the seminal work \cite{Kohlenbach2005}. Concretely, consider now the extended set of finite types $T^{\Omega,\Algebra}$ by
\[
0,\Omega,\Algebra\in T^{\Omega,\Algebra} \quad \mbox{and} \quad \mbox{if $\rho,\tau\in T^{\Omega,\Algebra}$ then $\rho(\tau)\in T^{\Omega,\Algebra}$},
\]
with the types $\Omega$ and $\Algebra$ serving as abstract representations of the sample and event space, respectively. The language of the system $\mathcal{F}^\omega[\PP]$ for a probability content over an algebra of events now arises from $\WEPAomega$ (lifted to this extended set of types) by adding the constants $\mathrm{eq}$ with type $0(\Omega)(\Omega)$ and $\in$ with type $0(\Algebra)(\Omega)$ for the sample space, $\cup$ with type $\Algebra(\Algebra)(\Algebra)$, $(\cdot)^c$ with type $\Algebra(\Algebra)$ and $\emptyset$ with type $\Algebra$ for the event space, and lastly $\PP$ of type $1(\Algebra)$ for the probability content.\footnote{The system $\mathcal{F}^\omega[\PP]$ as defined in \cite{NeriPischke2025} also contains a constant $c_\Omega$ of type $\Omega$, witnessing the non-emptyness of $\Omega$, and which in \cite{NeriPischke2025} serves a rather technical purpose in the proof of the associated logical metatheorems that plays no further role in this paper. We hence omit any further discussion thereof.} The axioms of $\mathcal{F}^\omega[\PP]$ then specify that
\[
x^\Omega=_\Omega y^\Omega \; :\equiv \; \mathrm{eq}xy=_00
\]
is an equivalence relation representing equality on $\Omega$, that the abstract element relation $\in$ behaves as expected relative to the set-theoretic operations $\cup$ and $(\cdot)^c$ for union and complement as well as the constant $\emptyset$ for the empty set and lastly that $\PP$ is probability content on the algebra represented by the type $\Algebra$.\footnote{While most axioms of probability contents are easily defined via computationally empty universal axioms, recognizing that the law of disjoint unions can be specified in a way that is admissible in the context of bound extraction theorems is not immediately straightforward and requires considerations on the uniform boundedness of the new abstract types. Further, the system $\mathcal{F}^\omega[\PP]$ as defined in \cite{NeriPischke2025} actually replaces the law of disjoint unions with the inclusion exclusion principle together with the monotonicity of the content for reasons of practicality in concrete formalizations. In fact, we want to highlight a necessary correction to \cite{NeriPischke2025} which misses the crucial (universal, and hence unproblematic) axiom that $\PP(\Omega)=_\mathbb{R}1$. While the precise axiomatization of $\PP$ therefore carries various subtleties with it, these do not feature in the present paper any further and so we simply refer the interested reader to the discussion in \cite{NeriPischke2025}.} As in \cite{NeriPischke2025}, we introduce appropriate abbreviations to make formal expressions less rigid: We typically write $A^c$ instead of $(A)^c$, $x\in A$ instead of $\in x A =_0 0$ and $x\not\in A$ instead of $\in x A\neq_0 0$. Also, we define $\Omega:=\emptyset^c$ and $A\cap B:=(A^c\cup B^c)^c$. Arbitrary finite unions and intersections are then definable from these operators via recursion, and we refer to \cite{NeriPischke2025} for further discussions. Equality on $\Algebra$ as well as inclusion are also derived notions which we obtain by setting 
\[
A =_\Algebra B \; :\equiv \; \forall x^\Omega(x\in A \leftrightarrow x \in B) \quad \text{and} \quad A\subseteq_\Algebra B :\equiv \forall x^\Omega \left(x \in A \rightarrow x \in B\right).
\]
Then, $=_\Algebra$ is provably an equivalence relation and $\subseteq_\Algebra$ is provably a partial order w.r.t.\ that equality. For the sole purpose of extending the inequality relation to all types, we define $x^\Omega\geq_\Omega y^\Omega := \PP(\Omega)\geq_\mathbb{R}\PP(\Omega)$ and $A^\Algebra\geq_\Algebra B^\Algebra:=\PP(A)\geq_\mathbb{R}\PP(B)$, following \cite{NeriPischke2025}.

The last addition to $\mathcal{F}^\omega[\PP]$ are two choice principles, the principle of quantifier-free choice $\QFAC$ as well as the principle of dependent choice $\DC$ (see e.g.\ Chapter 11 in \cite{Kohlenbach2008}), the latter of those giving access to full classical analysis.

\begin{remark}
In similarity to \cite{NeriPischke2025}, we remark that while the main system $\mathcal{F}^\omega[\PP]$, through $\DC$, contains a rather large amount of comprehension, this is just to highlight the potential strength of systems which are amenable to the present methods. All the while, as with \cite{NeriPischke2025}, no logical considerations made in this paper in any way depend on the presence of that comprehension, nor do they entail principles of such strength, and hence can be just as well developed over a range of weak fragments, such as those introduced in \cite{Kohlenbach1996a} based on the Grzegorczyk hierarchy \cite{Grzegorczyk1953}.
\end{remark}

\begin{remark}
The main system $\mathcal{F}^\omega[\PP]$ allows for various mathematically natural and proof-theoretically tame extensions to deal with various objects and notions from mathematical practice, e.g.\ random variables and their Lebesgue integrals as discussed in \cite{NeriPischke2025}. These extensions then, in particular, potentially populate the term structure of the type $\Algebra$ with more non-trivial ground terms. While we do not discuss these extensions here, and just phrase everything for the simplest system $\mathcal{F}^\omega[\PP]$, it should be noted that essentially all considerations of this paper extend mutatis mutandis to these broader settings.
\end{remark}

As we also consider semi-constructive variants of our results throughout, we will also rely on an intuitionistic variant of $\mathcal{F}^\omega[\PP]$. This variant, denoted by $\mathcal{F}^\omega_i[\PP]$, will be based on $\WEHAomega$, that is $\WEPAomega$ with the law of excluded middle removed, and additionally includes the principles the full axiom of choice $\AC$, Markov's principle $\MARK$ and the independence of premise principle for universal formulas $\IPU$ (see e.g.\ Chapters 5 and 8 in \cite{Kohlenbach2008}) instead of $\QFAC$ and $\DC$. Aside from these, $\mathcal{F}^\omega_i[\PP]$ arises from $\WEHAomega$ in exactly the same way as $\mathcal{F}^\omega[\PP]$ arises from $\WEPAomega$, that is with the same additional types, constants and axioms.

\subsection{Uniform boundedness, contra-collection and metatheorems on program extraction}

We now discuss the main results for the systems introduced previously, that is the logical metatheorems on bound extraction. While the basic versions of the classical variants of these results were established in \cite{NeriPischke2025}, the semi-constructive variants are for the first time stated in this paper. These, however, arise through a rather immediate combination of the constructions from \cite{NeriPischke2025} with the considerations on semi-constructive systems given by Gerhardy and Kohlenbach in \cite{GerhardyKohlenbach2006}, so that we are content with proof sketches for the resulting metatheorems tailored to probability theory.

Slightly more extensive modifications are required to incorporate Kohlenbach's uniform boundedness principle, as considered in \cite{Kohlenbach2006} (see also already the earlier works \cite{Kohlenbach1996b,Kohlenbach1998,Kohlenbach1999} for variants of this principle not phrased in the context of abstract types). While our arguments here follow the previous works \cite{GuenzelKohlenbach2016,Kohlenbach1998,Kohlenbach2006}, we actually consider this principle here for the first time in types higher than the base type $0$ in the context of the monotone functional interpretation. This requires some care in adapting the previous arguments, and we therefore provide additional details.

The most extensive modifications arise through our treatment of the so-called \emph{principle of contra-collection}, an abstract principle related to weak K\H{o}nig's lemma (arising, classically, as the contraposition of the associated uniform boundedness principle, itself an abstract extension of the FAN principle from intuitionism), isolated in this extended form for the first time in the context of the bounded functional interpretation \cite{FerreiraOliva2005}. Also this principle is treated here for the first time in the context of the monotone functional interpretation (outside of classical contexts), and although we only consider its variant for the base type $0$, together with some extensions to type $1$, this principle requires various non-trivial arguments which we provide in full detail.

The first key logical tool used in the context of the logical metatheorems, and in the rest of the paper for that matter, is G\"odel's functional (Dialectica) interpretation.

\begin{definition}[\cite{Goedel1958}]
The Dialectica interpretation $\varphi^D=\exists\underline{x}\forall\underline{y} \varphi_D(\underline{x},\underline{y})$ of a formula $\varphi$ in the language of $\mathcal{F}^\omega[\PP]$ (or its variants) is defined via the following recursion on the structure of the formula:
\begin{enumerate}
\item $\varphi^D:=\varphi_D:=\varphi$ for $\varphi$ being a prime formula.
\end{enumerate}
If $\varphi^D=\exists\underline{x}\forall\underline{y} \varphi_D(\underline{x},\underline{y})$ and $\psi^D=\exists\underline{u}\forall\underline{v} \psi_D(\underline{u},\underline{v})$, we set
\begin{enumerate}
\setcounter{enumi}{1}
\item $(\varphi\land \psi)^D:=\exists\underline{x},\underline{u}\forall\underline{y},\underline{v}(\varphi\land \psi)_D$\\ where $(\varphi\land \psi)_D(\underline{x},\underline{u},\underline{y},\underline{v}):=\varphi_D(\underline{x},\underline{y})\land \psi_D(\underline{u},\underline{v})$,
\item $(\varphi\lor \psi)^D:=\exists z^0,\underline{x},\underline{u}\forall\underline{y},\underline{v}(\varphi\lor \psi)_D$\\ where $(\varphi\lor \psi)_D(z^0,\underline{x},\underline{u},\underline{y},\underline{v}):=(z=0\rightarrow \varphi_D(\underline{x},\underline{y}))\land (z\neq 0\rightarrow \psi_D(\underline{u},\underline{v}))$,
\item $(\varphi\rightarrow \psi)^D:=\exists\underline{U},\underline{Y}\forall\underline{x},\underline{v}(\varphi\rightarrow \psi)_D$\\ where $(\varphi\rightarrow \psi)_D(\underline{U},\underline{Y},\underline{x},\underline{v}):=\varphi_D(\underline{x},\underline{Y}\underline{x}\underline{v})\to \psi_D(\underline{U}\underline{x},\underline{v})$,
\item $(\exists z^\tau \varphi(z))^D:=\exists z,\underline{x}\forall\underline{y}(\exists z^\tau \varphi(z))_D$\\ where $(\exists z^\tau \varphi(z))_D(z,\underline{x},\underline{y}):=\varphi_D(\underline{x},\underline{y},z)$,
\item $(\forall z^\tau \varphi(z))^D:=\exists\underline{X}\forall z,\underline{y}(\forall z^\tau \varphi(z))_D$\\ where $(\forall z^\tau \varphi(z))_D(\underline{X},z,\underline{y}):=\varphi_D(\underline{X}z,\underline{y},z)$.
\end{enumerate}
\end{definition}

Beyond this, in the classical context we will also rely on a negative translation to transfer classical to semi-constructive proofs. As in previous works, we consider the following rather minimal variant due to Kuroda.

\begin{definition}[\cite{Kuroda1951}]
The (Kuroda) negative translation $\varphi'$ of $\varphi$ is defined as $\varphi':=\neg\neg \varphi^*$, with $\varphi^*$ defined recursively via
\begin{enumerate}
\item $\varphi^*:=\varphi$, if $\varphi$ is atomic,
\item $(\varphi\circ \psi)^*:=\varphi^*\circ\psi^*$, for $\circ\in\{\land,\lor,\rightarrow\}$,
\item $(\exists x^\tau\varphi)^*:=\exists x^\tau \varphi^*$,
\item $(\forall x^\tau\varphi)^*:=\forall x^\tau\neg\neg\varphi^*$.
\end{enumerate}
\end{definition}

The last tool that will feature in the present paper is Kreisel's modified realizability interpretation, which can be employed in semi-constructive contexts \emph{without} Markov's principle.

\begin{definition}[\cite{Kreisel1959,Kreisel1962}]
For any formula $\varphi$ in the language of $\mathcal{F}^\omega[\PP]$ (or its variants), we define its modified realizability interpretation $\underline{x}\,mr\,\varphi$ by recursion on the structure of $\varphi$:
\begin{enumerate}
\item $\langle\rangle\,mr\,\varphi:=\varphi$ for a prime formula $\varphi$ where $\langle\rangle$ is the empty tuple.
\end{enumerate}
Further, if $\underline{x}\,mr\,\varphi$ and $\underline{y}\,mr\,\psi$ are the modified realizability interpretations of $\varphi$ and $\psi$, respectively, then:
\begin{enumerate}
\setcounter{enumi}{1}
\item $\underline{x},\underline{y}\,mr\,(\varphi\land \psi):=\underline{x}\,mr\,\varphi\land\underline{y}\,mr\, \psi$,
\item $z^0,\underline{x},\underline{y}\,mr\,(\varphi\lor \psi):=(z=_00\rightarrow \underline{x}\,mr\,\varphi)\land(z\neq_00\rightarrow\underline{y}\,mr\,\psi)$,
\item $\underline{Y}\,mr\, (\varphi\rightarrow \psi):=\forall\underline{x}(\underline{x}\,mr\, \varphi\rightarrow\underline{Y}\underline{x}\,mr\,\psi)$,
\item $\underline{X}\,mr\,\forall w^\rho \varphi(w):=\forall w^\rho(\underline{X}w\,mr\, \varphi(w))$,
\item $z^\rho,\underline{x}\,mr\,\exists w^\rho \varphi(w):=\underline{x}\,mr\,\varphi(z)$.
\end{enumerate}
\end{definition}

Next to the above proof interpretations, we will also crucially rely on the notion of \emph{majorizability}, originally introduced by Howard \cite{Howard1973} and later extended by Bezem \cite{Bezem1985} to the notion of \emph{strong majorizability}. While majorizability features crucially throughout, it is in particular also the latter notion which allows our classical system $\mathcal{F}^\omega[\PP]$ to include the strong dependent choice principle $\DC$, which is interpreted using bar-recursion for which the structure of all strongly majorizable functionals provides a model. Moreover, it generally allows us to include a very strong uniform boundedness principle later (see Definitions \ref{def:classUB} and \ref{def:constrUB}). We here consider an extension of the respective notion of majorizability to the abstract types $\Omega$ and $S$ for the use in the theory of probability contents in \cite{NeriPischke2025}. In that context, as in the case of the first logical metatheorems with abstract types considered in \cite{GerhardyKohlenbach2008,Kohlenbach2005}, majorants of functionals with types $\tau\in T^{\Omega,S}$ will have type $\widehat{\tau}\in T$, defined recursively via $\widehat{0}:=0$, $\widehat{\Omega}:=0$, $\widehat{S}:=0$ and $\widehat{\tau(\xi)}:=\widehat{\tau}(\widehat{\xi})$. We then introduce our extension of Bezem's notion of strong majorizability  semantically as follows, tied to the structure of all strongly majorizable functionals:

\begin{definition}[\cite{NeriPischke2025}, following \cite{GerhardyKohlenbach2008,Kohlenbach2005} and \cite{Bezem1985}]
Let $\Omega$ be a non-empty set, $S\subseteq 2^\Omega$ be an algebra and $\PP$ be a probability content on $S$. The structure $\mathcal{M}^{\omega,\Omega,S}$ and the majorizability relation $\gtrsim_\rho$ are defined by
\[
\begin{cases}
\mathcal{M}_0:=\mathbb{N}, n\gtrsim_0 m:=n,m\in\mathbb{N}\land n\geq m,\\
\mathcal{M}_\Omega:= \Omega, n\gtrsim_\Omega x:= n\in \mathbb{N},x\in \Omega\land n\geq \PP(\Omega),\\
\mathcal{M}_{S}:= S, n\gtrsim_{S} A:= n\in\mathbb{N},A\in S\land n\geq \PP(A),\\
x^*\gtrsim_{\tau(\xi)}x:=x^*\in \mathcal{M}_{\widehat{\tau}}^{\mathcal{M}_{\widehat{\xi}}}\land x\in \mathcal{M}_\tau^{\mathcal{M}_\xi}\\
\phantom{x^*\gtrsim_{\tau(\xi)}x:=}\land\forall y^*\in \mathcal{M}_{\widehat{\xi}},y\in \mathcal{M}_\xi(y^*\gtrsim_\xi y\rightarrow x^*y^*\gtrsim_\tau xy)\\
\phantom{x^*\gtrsim_{\tau(\xi)}x:=}\land\forall y^*,y\in \mathcal{M}_{\widehat{\xi}}(y^*\gtrsim_{\widehat{\xi}}y\rightarrow x^*y^*\gtrsim_{\widehat{\tau}}x^*y),\\
\mathcal{M}_{\tau(\xi)}:=\left\{x\in \mathcal{M}_\tau^{\mathcal{M}_\xi}\mid \exists x^*\in \mathcal{M}^{\mathcal{M}_{\widehat{\xi}}}_{\widehat{\tau}}:x^*\gtrsim_{\tau(\xi)}x\right\}.
\end{cases}
\]
\end{definition}

The other main structure featuring in the metatheorems is the structure of all set-theoretic functionals:

\begin{definition}
Let $\Omega$ be a non-empty set, $S\subseteq 2^\Omega$ be an algebra and $\PP$ be a probability content on $S$. The structure $\mathcal{S}^{\omega,\Omega,S}$ is defined via $\mathcal{S}_0:=\mathbb{N}$, $\mathcal{S}_\Omega:= \Omega$, $\mathcal{S}_{S}:=S$ and $\mathcal{S}_{\tau(\xi)}:=\mathcal{S}_{\tau}^{\mathcal{S}_{\xi}}$.
\end{definition}

The logical metatheorems, classical and constructive, then arise via a combination of a soundness result for any of the previous proof interpretations, by which one extracts realizing terms with types from $T^{\Omega,S}$ for a given $\forall\exists$-theorem, together with a subsequent majorization of these realizers, resulting in bounding terms with types from $T$ which are validated in a model based on $\mathcal{M}^{\omega,\Omega,S}$. From this, one then can transfer the results back to $\mathcal{S}^{\omega,\Omega,S}$ if the types occurring in the theorem are ``low enough'', formally encapsulated by the notion of small and admissible types as defined in \cite{GuenzelKohlenbach2016} (derived from the same notion introduced in \cite{GerhardyKohlenbach2008,Kohlenbach2005} under a different name). This combination of a proof interpretation and majorization effectively amounts to an application of the monotone variants of the interpretations in question, that is of the monotone functional interpretation \cite{Kohlenbach1996b} (recall also the previous references) or the monotone modified realizability interpretation \cite{Kohlenbach1998}. However, following e.g.\ \cite{Kohlenbach2005}, we here prefer the presentation where these two processes are firmly disentangled.

Now, as mentioned previously, the crucial ``novelty'' of the bound extraction theorem for the system $\mathcal{F}^\omega[\PP]$ as stated herein, compared to the one presented in \cite{NeriPischke2025}, is the inclusion of a uniform boundedness principle for the types $\Omega$ and $S$. This principle, pioneered in \cite{Kohlenbach1996b,Kohlenbach1996a} by Kohlenbach, and then developed further substantially in \cite{GuenzelKohlenbach2016,Kohlenbach1998,Kohlenbach1999,Kohlenbach2006,Kohlenbach2008}, has rather strong and even set-theoretically false conclusions in general. Nevertheless, it can be used admissibly in the context of bound extraction theorems to derive set-theoretically correct and uniform bounds. In the present paper, as already discussed in the introduction, we will in particular show how this principle allows our systems to formally use contents and associated outer contents, to a rather large degree, as if they were probability measures and associated outer measures, all while staying admissible in the context of bound extraction theorems, so that rather strikingly the resulting theorems will nevertheless still be true for contents. Even further, we in fact allow for uniform boundedness \emph{in higher types} in this paper, which has particularly interesting implications in the context of probability theory. 

In any case, the uniform boundedness principles considered in this paper take the following form:\footnote{As apparent, while extended to higher types, the uniform boundedness principles considered in this paper are only defined for pure types, as this allows for a quite smooth development of the associated theory while  being sufficient for the mathematical considerations of this paper. It is however quite possible that the present considerations also extend to arbitrary higher types.}

\begin{definition}[extending \cite{Kohlenbach2006}]\label{def:classUB}
Let $\rho$ be a pure type. The principle $\UBOmega{\rho}$ is defined as
\[
\begin{cases}
\forall y^{\alpha(0)}(\forall k^0, x^\alpha,\underline{z}^{\underline{\beta}}\, \exists w^\rho\, \varphi_\exists(y,k,\min_\alpha(x,yk),\underline{z},w)\\
\qquad\to \exists \chi^{\rho(0)}\,\forall k^0, x^\alpha,\underline{z}^{\underline{\beta}}\, \exists w \le_\rho \chi k\,\varphi_\exists(y,k,\min_\alpha(x,yk),\underline{z},w))
\end{cases}
\]
where $\alpha$ is a type of the form $0(\sigma_k)\ldots(\sigma_1)$, each type in $\underline{\beta}$ is of the form $\Omega(\tau_m)\ldots(\tau_1)$ and $\varphi_\exists$ is an existential formula where the types of all quantifiers are admissible (see \cite{GuenzelKohlenbach2016} and also \cite{GerhardyKohlenbach2008,Kohlenbach2005}). The principle $\UBF{\rho}$ is defined analogously, using $\Algebra$ instead of $\Omega$ in the conditions on the types in the tuple $\underline{\beta}$.
\end{definition}

This formulation is indeed very general and, following \cite{GuenzelKohlenbach2016}, represents a carefully defined intensional version of the ``usual'' uniform boundedness principle 
\[
\begin{cases}
\forall y^{\alpha(0)}(\forall k^0, x\leq_\alpha yk,\underline{z}^{\underline{\beta}}\, \exists w^\rho\, \varphi_\exists(y,k,x,\underline{z},w)\\
\qquad\to \exists \chi^{\rho(0)}\,\forall k^0, x\leq_\alpha yk,\underline{z}^{\underline{\beta}}\, \exists w \le_\rho \chi k\,\varphi_\exists(y,k,x,\underline{z},w)).
\end{cases}
\]
While a necessary restriction in order to stay admissible in the context of bound extraction theorems, these two formulations coincide for sentences that are extensional (see \cite{GuenzelKohlenbach2016}, to which we refer for further discussions in that vein). In any way, previous considerations of uniform boundedness in the context of the monotone functional interpretation were confined to principles with $\rho=0$ (in the above notation). It should however be stressed that this was not really out of a technical necessity, but rather since all mathematically meaningful applications of uniform boundedness were covered by uniform boundedness in that lowest type. In that way, a key contribution of the present paper is even the consideration of higher type uniform boundedness in the context of the monotone functional interpretation per se, but in particular that it uncovers probability theory as a mathematical area where these higher principles have mathematically useful consequences.

The following is now the classical metatheorem for probability contents, as established in \cite{NeriPischke2025}, extended to include the above uniform boundedness principles. For that purpose, we denote the union of all instances of $\UBOmega{\rho}$ and $\UBF{\rho}$, for pure types $\rho$, by $\UBOmega{\omega}$ and $\UBF{\omega}$, respectively. These principles can now be treated by a rather simple extension of the usual approach to the uniform boundedness principles in the context of the monotone functional interpretation as given in \cite{Kohlenbach2006} (see also \cite{GuenzelKohlenbach2016}). In particular, this extension (for the case $\rho=1$, with the higher cases following a similar pattern) is due to U. Kohlenbach, who kindly allowed us to include it here. In every other regard, the proof is a straightforward amalgamation of the proof given in \cite{NeriPischke2025} together with the discussions in \cite{GuenzelKohlenbach2016, Kohlenbach2006}, so that we confine ourselves to the treatment of $\UBOmega{\omega}$ and $\UBF{\omega}$.

\begin{theorem}[extending \cite{Kohlenbach2006} and \cite{NeriPischke2025}] \label{thm:metaClassical}
Let $\tau$ be admissible \emph{(}see \cite{GuenzelKohlenbach2016} and also \cite{GerhardyKohlenbach2008,Kohlenbach2005}\emph{)} and let $\varphi_\exists(x,n)$ be an existential formula of $\mathcal{F}^\omega[\PP]$ (with only $x,n$ free) such that all internal quantifiers have admissible types. If
\[
\mathcal{F}^{\omega}[\PP] + \UBOmega{\omega} +\UBF{\omega}\vdash\forall x^\tau\exists n^0 \varphi_\exists(x,n),
\]
then one can extract a partial functional $\Phi:\mathcal{S}_{\widehat{\tau}}\rightharpoonup\mathbb{N}$ which is total and bar-recursively computable on $\mathcal{M}_{\widehat{\tau}}$ (in the sense of Spector \cite{Spector1962}) and such that
\[
\mathcal{S}^{\omega,\Omega, \Algebra} \models \forall {x^*}^{\widehat{\tau}} \forall x \lesssim_{\tau} x^* \exists n \leq_0 \Phi(x^*) \, \varphi_\exists(x,n)
\]
holds for $\mathcal{S}^{\omega, \Omega, \Algebra}$ defined via any non-empty set $\Omega$, algebra $S\subseteq 2^\Omega$, and probability content $\PP$ on $\Algebra$ (with suitable interpretations of the additional constants).\footnote{For example, the constant $\PP$ is interpreted in the models $\mathcal{M}^{\omega,\Omega, \Algebra}$ or $\mathcal{S}^{\omega,\Omega, \Algebra}$ via $[\PP]_{\mathcal{M}^{\omega,\Omega,\Algebra}}=[\PP]_{\mathcal{S}^{\omega,\Omega,\Algebra}}=\lambda A^S.(\PP(A))_\circ$ for the given probability content $\PP$, where $(\cdot)_\circ$ selects a canonical type $1$ representation of a given real as defined in \cite{Kohlenbach2005}. While this operation is naturally non-effective, it is well-behaved w.r.t.\ majorization which serves all intents and purposes. In this paper, we do not rely on these aspects any further and generally refer to \cite{NeriPischke2025} for further discussions on this in relation to probability theory, as well as the interpretations of the other constants.} Further:
\begin{enumerate}
\item If $\widehat{\tau}$ is of degree $1$, then $\Phi$ is a total computable functional. 
\item We may have tuples instead of single variables $x,n$.
\item The system used in the premise may be extended with any further set of axioms $\beth$ of type $\Delta$ \emph{(}see e.g.\ \cite{NeriPischke2025} for a definition in the context of $\mathcal{F}^\omega[\PP]$\emph{)}, in which case the conclusion is true whenever $\mathcal{S}^{\omega,\Omega, \Algebra}\models \beth$.
\item If the claim is proved without $\DC$, then $\tau$ may further be arbitrary and $\Phi$ will be a total functional on $\mathcal{S}_{\widehat{\tau}}$ that is primitive recursive in the sense of G\"odel \cite{Goedel1958} and Hilbert \cite{Hilbert1926}.
\item If the claim can be proved without using either $\DC$ or uniform boundedness, then plain majorization (see e.g.\ Chapter 6 in \cite{Kohlenbach2008}) can be used instead of strong majorization.
\end{enumerate}
\end{theorem}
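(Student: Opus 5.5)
The plan is to follow the standard route for bound extraction theorems with abstract types, as in \cite{NeriPischke2025}, and modify it only at the uniform boundedness principles. First, apply the Kuroda negative translation to the proof in $\mathcal{F}^{\omega}[\PP] + \UBOmega{\omega} +\UBF{\omega}$. This yields a proof of $(\forall x\exists n\,\varphi_\exists)'$ in the intuitionistic counterpart augmented by the negative translations of $\QFAC$, $\DC$ and the $\mathsf{UB}$ instances. Next, apply G\"odel's functional interpretation. The base axioms, $\QFAC$ and the axioms for $\PP$ are handled exactly as in \cite{NeriPischke2025}. The negative translation of $\DC$ is interpreted by Spector's bar recursion, which is where the strong majorizability structure $\mathcal{M}^{\omega,\Omega,\Algebra}$ is needed. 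Markov's principle then removes the double negations in front of the existential conclusion, because $\varphi_\exists$ is existential. The result is a realizing term $t$ of type $T^{\Omega,\Algebra}$ with $\forall x\,\varphi_\exists(x,tx)$ in a model over $\mathcal{M}^{\omega,\Omega,\Algebra}$. Majorizing $t$ by a closed term $t^*$, with bar recursion majorized in the sense of Bezem, gives $\Phi$. Since $\tau$ and the quantifier types of $\varphi_\exists$ are admissible, the statement transfers back to $\mathcal{S}^{\omega,\Omega,\Algebra}$.

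The real work is to show that every instance of $\UBOmega{\rho}$ and $\UBF{\rho}$, after negative translation, has a functional interpretation that is majorizable. More precisely, I would show it is satisfied by a majorant in $\mathcal{M}^{\omega,\Omega,\Algebra}$, even though the principle itself is false in $\mathcal{S}$. Since $\varphi_\exists$ is existential and $\UBOmega{\rho}$ is $\Pi$-like in its premise, its negative translation is intuitionistically equivalent to the principle itself, with the premise $\forall k,x,\underline z\,\exists w\,\varphi_\exists$. So the interpretation asks for the following. Given $y$ and a functional $W$ producing $w$ (together with the existential witnesses of $\varphi_\exists$) from $k,x,\underline z$, produce $\chi$.

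Following \cite{Kohlenbach2006,GuenzelKohlenbach2016}, set
\[
\chi k := W^*(k, y^*k, \underline{c}^*),
\]
where $W^*$ majorizes $W$ and $\underline c^*$ are the constant majorants of the $\underline{\beta}$-arguments. These constant majorants exist because every element of $\Omega$ or $\Algebra$ is majorized by $1\geq \PP(\cdot)$, i.e.\ the abstract types are uniformly bounded; this is the place where boundedness of the sample and event spaces is used. The intensional $\min_\alpha(x,yk)$ ensures that $y^*k$ majorizes $\min_\alpha(x,yk)$ for all $x$, so $W^*(k,y^*k,\underline c^*)$ majorizes every $w$ selected by $W$.

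The main obstacle is the step from $\rho=0$ to higher pure types. For $\rho\geq 1$, majorization $\gtrsim_\rho$ does not imply $\le_\rho$ in general. For pure types, however, it does imply $\le_\rho$ pointwise for monotone arguments, and strong majorants are self-majorizing. So I would first check, by induction on the pure type, that $\chi k$ may be taken strongly majorizable and that the conclusion $\exists w\le_\rho\chi k$ holds in $\mathcal{M}^{\omega,\Omega,\Algebra}$. I would then verify that the interpreted conclusion, which is existential plus $\le_\rho$ (a universal, hence computationally empty, formula), is validated there. I would do the case $\rho=1$ explicitly, which is Kohlenbach's argument, and argue that higher pure types follow the same pattern.

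The refinements are then routine. For item (1), degree~$1$ of $\widehat\tau$ makes bar recursion on majorants yield a total computable functional. For item (2), tuples are handled componentwise. For item (3), axioms $\beth$ of type $\Delta$ are interpreted by their majorizable witnesses. For item (4), without $\DC$ no bar recursion is used, so the realizer is a primitive recursive term and plain G\"odel primitive recursion suffices for arbitrary $\tau$. For item (5), without $\DC$ and without $\mathsf{UB}$, neither bar recursion nor self-majorization of $\chi$ is needed, so Howard majorization suffices.
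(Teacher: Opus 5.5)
Your overall pipeline (negative translation, functional interpretation, bar recursion for $\DC$, majorization in $\mathcal{M}^{\omega,\Omega,\Algebra}$, transfer back via admissibility) is the one the paper relies on. At $\rho\in\{0,1\}$ your bound $W^*(k,y^*k,\underline c^*)$ is also fine, since $f^*\gtrsim_1 f$ gives $f^*\geq_1 f$. The gap is at $\rho\geq 2$, which is exactly the new part of the theorem.

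The conclusion of $\UBOmega{\rho}$ requires $w\leq_\rho\chi k$, i.e.\ $wu\leq\chi k u$ for \emph{every} $u^\xi$. A majorant only dominates at arguments that it is fed majorants of. Your remark that it dominates at monotone arguments is precisely what is insufficient. Concretely, $F(g):=g(0)$ is strongly majorized by $F^*(g):=g(1)$. Yet $F^*(g)=0<1=F(g)$ for $g$ with $g(0)=1$ and $g(n+1)=0$. Take $W:=\lambda k,x,\underline z.F$ and $\varphi_\exists(w):\equiv w(g)=_01$. Then your $\chi=\lambda k.F^*$ admits no $w\leq_2\chi k$ satisfying $\varphi_\exists$. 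The proposed ``induction on the pure type'' does not address this. The problem is not strong majorizability of $\chi k$, but that a majorant is not an extensional upper bound.

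The missing idea is to define the bound pointwise in $u$ as an actual maximum, with the maximizers allowed to depend on $u$:
\[
\chi k u:=\max\{W(k,\min_\alpha(x,yk),\underline z,u)\mid x\in\mathcal{M}_\alpha,\ \underline z\in\mathcal{M}_{\underline\beta}\}.
\]
This exists in $\mathcal{M}^{\omega,\Omega,\Algebra}$ because all these values are bounded by $W^*(k,y^*k,\underline c^*,u^*)$ for some majorant $u^*$ of $u$, which exists since $u\in\mathcal{M}_\xi$.

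The paper packages this as the type-$\Delta$ axiom $F^{\Omega,\rho}$. It asserts functionals $X(u,k)\leq_\alpha yk$ and $\underline Z(u,k)$ with $W(k,X(u,k),\underline Z(u,k),u)\geq_0 W(k,\min_\alpha(x,yk),\underline z,u)$ for all $u,k,x,\underline z$. The paper then derives $\UBOmega{\rho}$ from $F^{\Omega,\rho}$ using $\QFAC$ and $\QFER$, with $\chi:=\lambda k,u.W(k,X(u,k),\underline Z(u,k),u)$. Finally, it interprets the Skolem constants by choosing maximizers. These constants are trivially majorizable: $\mathcal X$ by $\lambda W^*,y^*,u.y^*$, and $\underline{\mathcal Z}$ by constant-$1$ functionals. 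So $\chi$ lies in the model and is majorizable for free.

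The same idea repairs a subtlety already present for $\rho\in\{0,1\}$. Your $\chi$ is a function of arbitrarily chosen majorants rather than of $(y,W)$. As a realizer it therefore need not be majorized by $\lambda y^*,W^*,k.W^*(k,y^*k,\underline c^*)$. The realizer must be the pointwise maximum, with your term serving as its majorant.
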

\begin{proof}
We only discuss the treatment of the higher uniform boundedness principle $\UBOmega{\rho}$ for a pure type $\rho=0(\xi)$, the principle $\UBF{\rho}$ is treated analogously. As with $\UBOmega{0}$ and any other uniform boundedness principle for that matter (see e.g.\ \cite{GuenzelKohlenbach2016,Kohlenbach2006}), we actually treat an associated stronger (type $\Delta$) axiom $F^{\Omega,\rho}$ defined as
\begin{gather*}
\forall \Phi^{\rho\underline{\beta}^t(\alpha)(0)},y^{\alpha(0)}\exists X^{\alpha(0)(\xi)}\leq_{\alpha(0)(0)} \lambda i^0.y\, \exists\underline{Z}^{\underline{\beta}(0)(\xi)}\, \forall u^\xi,k^0,x^\alpha,\underline{z}^{\underline{\beta}}\\
\left(\Phi(k,X(u,k),\underline{Z}(u,k),u)\geq_0 \Phi(k,\min_\alpha(x,yk),\underline{z},u)\right).
\end{gather*}
where, given $\beta=(\beta_1,\dots,\beta_m)$, we write $\underline{\beta}^t:=(\beta_m)\dots(\beta_1)$ and $\underline{\beta}(0):=(\beta_1(0),\dots,\beta_m(0))$. To see that treating $F^{\Omega,\rho}$ suffices, we have the following claim:\medskip

\begin{claim}$\mathcal{F}^{\omega}[\PP] +F^{\Omega,\rho}\vdash \UBOmega{\rho}$.
\end{claim}
\begin{claimproof} 
For that, let $y$ be given and assume that $\forall k^0, x^\alpha,\underline{z}^{\underline{\beta}}\, \exists w^\rho\, \varphi_\exists(y,k,\min_\alpha(x,yk),\underline{z},w)$. By $\QFAC$, we get a functional $\Phi$ such that
\[
\forall k^0, x^\alpha,\underline{z}^{\underline{\beta}}\, \varphi_\exists(y,k,\min_\alpha(x,yk),\underline{z},\Phi(k,x,z)).
\]
Provably, without any assumptions, one has $\min_\alpha(\min_\alpha(x,yk),yk)=_\alpha\min_\alpha(x,yk)$. Thus, by the quantifier-free extensionality rule $\QFER$, we have
\[
\forall k^0, x^\alpha,\underline{z}^{\underline{\beta}}\, \varphi_\exists(y,k,\min_\alpha(x,yk),\underline{z},\Phi(k,\min_\alpha(x,yk),z)).
\]
Using $F^{\Omega,\rho}$, we get an $X$ and $\underline{Z}$ such that 
\[
\forall u^\xi,k^0,x^\alpha,\underline{z}^{\underline{\beta}}\left(\Phi(k,X(u,k),\underline{Z}(u,k),u)\geq_0 \Phi(k,\min_\alpha(x,yk),\underline{z},u)\right).
\]
In particular, for any $k$, $x$ and $\underline{z}$, we have
\[
\lambda u^\xi.\Phi(k,X(u,k),\underline{Z}(u,k),u) \geq_\rho \Phi(k,\min_\alpha(x,yk),\underline{z}),
\]
so that $\chi:=\lambda k^0,u^\xi.\Phi(k,X(u,k),\underline{Z}(u,k),u)$ satisfies the conclusion of $\UBOmega{\rho}$.
\end{claimproof}
\medskip

\noindent To now formally deal with $F^{\Omega,\rho}$ in the context of a metatheorem, we proceed like with any other type $\Delta$ axiom (see e.g.\ \cite{GuenzelKohlenbach2016,NeriPischke2025}) and consider the witnessed Skolemization $\widetilde{F}^{\Omega,\rho}$ defined as
\begin{gather*}
\forall \Phi^{\rho\underline{\beta}^t(\alpha)(0)},y^{\alpha(0)}, u^\xi,k^0,x^\alpha,\underline{z}^{\underline{\beta}}\\
\left(\mathcal{X}(\Phi,y,u)\leq_{\alpha(0)} y \land \Phi(k,\mathcal{X}(\Phi,y,u,k),\underline{\mathcal{Z}}(\Phi,y,u,k),u)\geq_0 \Phi(k,\min_\alpha(x,yk),\underline{z},u)\right).
\end{gather*}
where $\mathcal{X}$ and $\underline{\mathcal{Z}}$ are new constants of type $\alpha(0)(\xi)(\alpha(0))(\rho\underline{\beta}^t(\alpha)(0))$ and $\underline{\beta}(0)(\xi)(\alpha(0))(\rho\underline{\beta}^t(\alpha)(0))$, respectively. Note that $\mathcal{X}$ and $\underline{\mathcal{Z}}$ solve the functional interpretation of $F^{\Omega,\rho}$ and, as $(F^{\Omega,\rho})'$ is intuitionistically implied by $F^{\Omega,\rho}$, they can also be used to solve that of $(F^{\Omega,\rho})'$. It remains to give an interpretation of these new constants in the model based on $\mathcal{M}^{\omega,\Omega,\Algebra}$ such that $\widetilde{F}^{\Omega,\rho}$ is satisfied and that they are majorized by closed terms from $\mathcal{A}^\omega$. For that, let $\Phi,y$ from $\mathcal{M}^{\omega,\Omega,\Algebra}$ be given with majorants $\Phi^*,y^*$. Then, for any $x,\underline{z}$ and $k$ in $\mathcal{M}^{\omega,\Omega,\Algebra}$, $\min_\alpha(x,yk)$ is majorized by $y^*k$ and $\underline{z}$ by the tuple of constant-$1$ functionals $\underline{z}^*$, as all elements of $\underline{z}$ have value type $\Omega$. Hence, we have
\[
\forall x\in\mathcal{M}_{\alpha},\underline{z}\in\mathcal{M}_{\underline{\beta}}\left( \Phi^*(k,y^*k,\underline{z}^*)\gtrsim_\rho\Phi(k,\min_\alpha(x,yk),\underline{z})\right).
\]
For any $u\in\mathcal{M}_{\xi}$, we can define
\[
\mathrm{Max}_{\Phi,y,k}(u):=\max\{\Phi(k,\min_\alpha(x,yk),\underline{z},u)\mid x\in\mathcal{M}_{\alpha}\text{ and }\underline{z}\in\mathcal{M}_{\underline{\beta}}\}
\]
which is well-defined as $\Phi^*(k,y^*k,\underline{z}^*,u^*)\geq_0\Phi(k,\min_\alpha(x,yk),\underline{z},u)$ for any such $x$ and $z$, where $u^*\gtrsim_\xi u$ (which exists as $u\in\mathcal{M}_{\xi}$). In particular, as the maximum is attained, we have
\[
\forall \Phi,y,u,k\in\mathcal{M}^{\omega,\Omega,\Algebra}\ \exists x,\underline{z}\in\mathcal{M}^{\omega,\Omega,\Algebra}\left( x\leq_\alpha yk\land \Phi(k,x,\underline{z},u)=_0\mathrm{Max}_{\Phi,y,k}(u)\right). 
\]
Using the axiom of choice, we get functionals $\mathfrak{X}$ and $\underline{\mathfrak{Z}}$ such that $\mathfrak{X}(\Phi,y,u)\leq y$ and 
\[
\Phi(k,\mathfrak{X}(\Phi,y,u,k),\underline{\mathfrak{Z}}(\Phi,y,u,k),u)=_0\mathrm{Max}_{\Phi,y,k}(u)\geq \Phi(k,\min_\alpha(x,yk),\underline{z},u)
\]
for any $\Phi,y,k,x,\underline{z}$ and any $i$. We set $[\mathcal{X}]_{\mathcal{M}^{\omega,\Omega,\Algebra}}:=\mathfrak{X}$ and $[\mathcal{Z}]_{\mathcal{M}^{\omega,\Omega,\Algebra}}:=\mathfrak{Z}$, which are well-defined as the constant-$1$ functionals majorize $\mathfrak{Z}$ and since $\lambda \Phi^*,y^*,u.y^*$ majorizes $\mathfrak{X}$. It is clear that these functionals satisfy $\widetilde{F}^{\Omega,\rho}$.
\end{proof}

Note in particular that the extractable bounds, as guaranteed by the above metatheorem, subscribe to a high degree of uniformity in the sense that they will be independent of all parameters relating to the underlying probability content space (see also again the discussion in \cite{NeriPischke2025}).

We now present the semi-constructive variants of the above metatheorem. In fact, we will discuss two variants of a semi-constructive metatheorem here, one based on the Dialectica interpretation and the other based on modified realizability. While we focus on the Dialectica interpretation throughout, we still highlight the metatheorem based on the modified realizability interpretation, as if a proof does not rely on those aspects of the theory underpinned by Markov's principle, then it provides a just as viable tool, with the added bonus that one can allow an even larger class of non-computational principles in proofs without voiding the bound extraction theorems (see in particular also the discussions in \cite{Kohlenbach2008}).

Crucially, also here we can allow a uniform boundedness principle which will similarly include types higher than $0$. In contrast to the classical case, the main benefit of the semi-constructive case even allows for this principle to be unrestricted regarding the formula (see e.g.\ the discussion in \cite{Kohlenbach1998}). In that way, the principles considered in the present paper take the following form:

\begin{definition}[extending \cite{Kohlenbach1996b} and \cite{Kohlenbach2006}]\label{def:constrUB}
Let $\rho$ be a pure type. The principle $\FullUBOmega{\rho}$ is defined as
\[
\begin{cases}
\forall y^{\alpha(0)}(\forall k^0, x^\alpha,\underline{z}^{\underline{\beta}}\, \exists w^\rho\, \varphi(y,k,\min_\alpha(x,yk),\underline{z},w)\\
\qquad\to \exists \chi^{\rho(0)}\,\forall k^0, x^\alpha,\underline{z}^{\underline{\beta}}\, \exists w \le_\rho \chi k\,\varphi(y,k,\min_\alpha(x,yk),\underline{z},w))
\end{cases}
\]
where $\alpha$ is a type of the form $0(\sigma_k)\ldots(\sigma_1)$, each type in $\underline{\beta}$ is of the form $\Omega(\tau_m)\ldots(\tau_1)$ and $\varphi$ is any formula. The principle $\FullUBF{\rho}$ is defined analogously, using $\Algebra$ instead of $\Omega$ in the conditions on the types in the tuple $\underline{\beta}$.
\end{definition}

As commented on above, in the presence of extensionality for $\varphi$, the above principle in fact implies the stronger uniform boundedness principle
\[
\begin{cases}
\forall y^{\alpha(0)}(\forall k^0, x\leq_\alpha yk,\underline{z}^{\underline{\beta}}\, \exists w^\rho\, \varphi(y,k,x,\underline{z},w)\\
\qquad\to \exists \chi^{\rho(0)}\,\forall k^0, x\leq_\alpha yk,\underline{z}^{\underline{\beta}}\, \exists w \le_\rho \chi k\,\varphi(y,k,x,\underline{z},w)).
\end{cases}
\]

We now state the resulting metatheorem as derived using the monotone Dialectica interpretation. While this metatheorem is new to the literature, we also here do not expand on the proof as it arises via an easy combination of the considerations from \cite{NeriPischke2025} with \cite{GerhardyKohlenbach2006} together with those of \cite{Kohlenbach1996b} for the uniform boundedness principle, adapted to the abstract types in a similar way as in \cite{Kohlenbach2006,GuenzelKohlenbach2016} and to higher types as in the proof of the previous Theorem \ref{thm:metaClassical}. Similar to before, we write $\FullUBOmega{\omega}$ for the union of the principles $\FullUBOmega{\rho}$, as well as $\FullUBF{\omega}$ for the union of the principles $\FullUBF{\rho}$, both for pure types $\rho$.

\begin{theorem}\label{thm:metaConstructive}
Let $\tau$ be admissible and let $\varphi(x,n)$ be any formula of $\mathcal{F}_i^{\omega}[\PP]$ (with only $x,n$ free) such that all positively occurring universal quantifiers and all negatively occurring existential quantifiers have small types and all other types are admissible. If
\[
\mathcal{F}_i^{\omega}[\PP] + \FullUBOmega{\omega} + \FullUBF{\omega} \vdash \forall x^\tau\exists n^0 \varphi(x,n),
\]
then one can extract a functional $\Phi:\mathcal{S}_{\widehat{\tau}}\to\mathbb{N}$ which is primitive recursive (in the sense of G\"odel \cite{Goedel1958} and Hilbert \cite{Hilbert1926}) and such that 
\[
\mathcal{S}^{\omega, \Omega, \Algebra} \models \forall {x^*}^{\widehat{\tau}} \forall x \lesssim_{\tau} x^*\exists n \leq_0 \Phi(x^*) \, \varphi(x,n)
\]
holds for $\mathcal{S}^{\omega, \Omega, \Algebra}$ defined via any non-empty set $\Omega$, algebra $S\subseteq 2^\Omega$, and probability content $\PP$ on $\Algebra$ (and with suitable interpretations of the additional constants). Further:

\begin{enumerate}
\item As before, we may have tuples instead of single variables $x,n$ and the system used in the premise may be extended with any further set of axioms $\beth$ of type $\Delta$ \emph{(}see e.g.\ \cite{NeriPischke2025} for a definition in the context of $\mathcal{F}^\omega[\PP]$\emph{)}, in which case the conclusion is true whenever $\mathcal{S}^{\omega,\Omega, \Algebra}\models \beth$.
\item If the proof does not use Markov's principle $\MARK$, then (using the monotone modified realizability interpretation) the conclusion remains valid even in the context of the independence of premise principle $\IPEF$ for existential-free formulas \emph{(}see e.g.\ Chapter 5 in \cite{Kohlenbach2008}\emph{)} and any further set of axioms $\beth$ of type $\Theta$ \emph{(}see e.g.\ Chapter 7 in \cite{Kohlenbach2008}\emph{)}, in which case the conclusion is true whenever $\mathcal{S}^{\omega,\Omega, \Algebra}\models \beth$.
\item If the claim is proved without uniform boundedness, then plain majorization (see e.g.\ Chapter 6 in \cite{Kohlenbach2008}) can be used instead of strong majorization.
\end{enumerate}
\end{theorem}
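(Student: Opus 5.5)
The proof follows the usual two-stage pattern of the paper: first interpret the proof, then majorize the resulting realizers. The only genuinely new work is the treatment of $\FullUBOmega{\omega}$ and $\FullUBF{\omega}$.

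\emph{Interpretation.} Apply the soundness theorem of the (monotone) Dialectica interpretation for $\WEHAomega+\AC+\MARK+\IPU$, lifted to the types $T^{\Omega,\Algebra}$ as in \cite{GerhardyKohlenbach2006} and \cite{NeriPischke2025}. No negative translation is needed, because the system is intuitionistic. The universal axioms for $\mathrm{eq}$, $\in$, $\cup$, $(\cdot)^c$, $\emptyset$ and $\PP$, and any further axioms $\beth$ of type $\Delta$, are handled exactly as in \cite{NeriPischke2025}. This means:
\begin{enumerate}
\item Universal axioms are their own interpretation.
\item The axiom for disjoint unions and the type $\Delta$ axioms are added in witnessed Skolemized form.
\item The new constants are interpreted in $\mathcal{M}^{\omega,\Omega,\Algebra}$ and majorized by closed terms.
\end{enumerate}
From a proof of $\forall x^\tau\exists n^0\varphi(x,n)$ we then obtain a closed term $t$ such that $\forall x\,\forall\underline{v}\,\varphi_D(x,tx,\dots,\underline{v})$ holds. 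The conditions that positively occurring universal and negatively occurring existential quantifiers have small types, and all other types are admissible, are used at the end to pass back from $\mathcal{M}^{\omega,\Omega,\Algebra}$ to $\mathcal{S}^{\omega,\Omega,\Algebra}$. Following \cite{GerhardyKohlenbach2006}, one shows that $\varphi_D$ implies $\varphi$ in the model, and that the existential content of $\varphi^D$ is absolute between the structures, given these type restrictions.

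\emph{Uniform boundedness.} This is the key step. The plan is to treat $\FullUBOmega{\rho}$ for $\rho=0(\xi)$ by reducing it to the stronger axiom $F^{\Omega,\rho}$ from the proof of Theorem~\ref{thm:metaClassical}. First use full $\AC$ (rather than $\QFAC$) to obtain $\Phi$ with $\forall k,x,\underline{z}\,\varphi(y,k,\min_\alpha(x,yk),\underline{z},\Phi(k,\min_\alpha(x,yk),\underline{z}))$; the idempotence of $\min_\alpha$ together with $\QFER$ makes this step go through. Then $F^{\Omega,\rho}$ gives $\chi:=\lambda k,u.\Phi(k,X(u,k),\underline{Z}(u,k),u)$, and $w:=\Phi(k,\min_\alpha(x,yk),\underline{z})\le_\rho\chi k$ witnesses the conclusion. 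Since $\varphi$ is arbitrary, we need not ask whether $\varphi$ is existential: in the intuitionistic setting $\AC$ is available for all formulas and is itself Dialectica-interpretable. This is exactly why the semi-constructive principle can be unrestricted, as in \cite{Kohlenbach1996b}.

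$F^{\Omega,\rho}$ is of type $\Delta$, so it is interpreted by its witnessed Skolemization $\widetilde{F}^{\Omega,\rho}$, with constants $\mathcal{X},\underline{\mathcal{Z}}$ realized in $\mathcal{M}^{\omega,\Omega,\Algebra}$ by the maximum construction $\mathrm{Max}_{\Phi,y,k}(u)$ verbatim from the previous proof. This works because elements of types ending in $\Omega$ (or $\Algebra$) are majorized by the constant-$1$ functionals, and inputs below $yk$ are majorized by $y^*k$. Consequently $\lambda\Phi^*,y^*,u.y^*$ and the constant-$1$ functionals majorize the constants. This maximization over the bounded-but-uncountable space of $x,\underline{z}$ is where strong majorizability is genuinely needed, which explains clause (3).

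\emph{Majorization.} Majorize $t$ using the majorizability of all constants, including $\mathcal{X}$ and $\underline{\mathcal{Z}}$, to obtain a closed primitive recursive term $\Phi$ of type $\widehat{\tau}(\cdot)$ with $\Phi(x^*)\ge tx$ whenever $x^*\gtrsim x$. No bar recursion arises, since there is no $\DC$. For clause (2), replace the Dialectica interpretation by monotone modified realizability \cite{Kohlenbach1998}. This interprets $\IPEF$ and type $\Theta$ axioms, and it interprets $\FullUBOmega{\omega}$ by the same reduction, because $\AC$ is mr-interpretable; the price is that $\MARK$ is lost. The main obstacle, in my view, is checking that $\widetilde{F}^{\Omega,\rho}$ remains valid in the strongly majorizable model for higher $\xi$. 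In particular, one must verify that $\mathfrak{X}$ and $\underline{\mathfrak{Z}}$, chosen pointwise in $u$ by choice, land in $\mathcal{M}^{\omega,\Omega,\Algebra}$. Their majorants do not depend on $u$, so this should reduce to the self-majorization clause in the definition of $\gtrsim$.
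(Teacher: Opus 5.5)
Your handling of the uniform boundedness principles is the one the paper uses in the proof of Theorem~\ref{thm:metaClassical}. That covers the reduction of $\FullUBOmega{\rho}$ to $F^{\Omega,\rho}$ (with full $\AC$ in place of $\QFAC$, plus idempotence of $\min_\alpha$ and $\QFER$), the witnessed Skolemization $\widetilde{F}^{\Omega,\rho}$, and the $\mathrm{Max}$ construction in $\mathcal{M}^{\omega,\Omega,\Algebra}$. The membership of $\mathfrak{X},\underline{\mathfrak{Z}}$ in the model, which you call the main obstacle, is unproblematic.

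The step that fails is the last one. You pass from $\mathcal{M}^{\omega,\Omega,\Algebra}\models\forall\underline{v}\,\varphi_D(x,tx,\dots,\underline{v})$ to $\varphi(x,tx)$ by appeal to Gerhardy--Kohlenbach. That argument relies on the characterization $\varphi\leftrightarrow\varphi^D$, whose proof uses $\AC$, $\MARK$ and $\IPU$. Without uniform boundedness, $t$ is a closed term that can be evaluated in $\mathcal{S}^{\omega,\Omega,\Algebra}$, where these principles hold. Once $t$ contains the constants $\mathcal{X},\underline{\mathcal{Z}}$, it only makes sense in $\mathcal{M}^{\omega,\Omega,\Algebra}$, and there $\AC$ fails. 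A choice functional for a true $\forall f^1\exists m^0$-statement may have to be unbounded on $\{f\mid f\le_1\lambda i.1\}$, and then it is not majorizable. This is exactly why $\FullUBOmega{\rho}$ is false in $\mathcal{M}^{\omega,\Omega,\Algebra}$ even though its Dialectica interpretation is realized there. Your reduction of UB to $F^{\Omega,\rho}$ uses $\AC$ precisely where the model does not validate it, and the type restrictions on $\varphi$ do not compensate.

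In fact the gap cannot be closed, because the statement with full $\FullUBOmega{\omega}$ and arbitrary $\varphi$ is false. Let $H:\equiv\forall f^1\exists m^0\,(f(m)=_0 0\vee\forall j^0\,(f(j)\neq_0 0))$.
\begin{enumerate}
\item Apply $\FullUBOmega{0}$ with $\alpha=1$, $y:=\lambda k.\lambda i.1$ and a dummy $z^\Omega$, to the formula $x'(w)=_0 0\vee\forall j^0\,(x'(j)\neq_0 0)$, where $x'$ occupies the slot of $\min_1(x,yk)$.
\item From $H$ this yields $\chi$ such that, for every $x^1$, $\exists w\le_0\chi(0)\,(x'(w)=_0 0\vee\forall j^0\,(x'(j)\neq_0 0))$ holds with $x'=\min_1(x,\lambda i.1)$.
\item Take $x(i)=0$ if $i=\chi(0)+1$ and $x(i)=1$ otherwise. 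Both disjuncts then fail, so $\mathcal{F}^\omega_i[\PP]+\FullUBOmega{0}\vdash\neg H$, hence it proves $\forall x^0\exists n^0\,\neg H$. No $\MARK$ is used, so clause (2) is affected as well.
\item The formula $\neg H$ contains only quantifiers of types $0$ and $1$, so it meets every type restriction.
\item Yet $\neg H$ is false in $\mathcal{S}^{\omega,\Omega,\Algebra}$, since $H$ is classically true. It is also false in $\mathcal{M}^{\omega,\Omega,\Algebra}$, because $\mathcal{M}_1=\mathbb{N}^\mathbb{N}$. Meanwhile $(\neg H)^D$ is realized in $\mathcal{M}^{\omega,\Omega,\Algebra}$ by a term built from the UB constants.
\end{enumerate}

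The paper's own proof is only the remark that the result is an ``easy combination'' of the cited works, and it does not address this either. A correct version must restrict $\varphi$ whenever uniform boundedness is used, for example to existential formulas as in Theorem~\ref{thm:metaConstructiveCC}, or weaken what is claimed in $\mathcal{S}^{\omega,\Omega,\Algebra}$. Without uniform boundedness, including clause (3), your outline is fine.
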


As before, also the bounds guaranteed by this semi-constructive metatheorem are highly uniform, being again independent of all parameters relating to the underlying probability content space.

We now provide further extensions in this semi-constructive context to also accommodate the so-called principle of \emph{contra-collection}, which has similar mathematically useful consequences than uniform boundedness later on. This principle is motivated from its use in the context of the bounded functional interpretation \cite{FerreiraOliva2005}, but introduced here in a style that is suitably adapted to the monotone functional interpretation for the first time.  

\begin{definition}
The principle $\CCOmega{0}$ is defined as
\[
\forall k^0 \exists \underline{z}^{\underline{\beta}} \forall n \le_0  k\,\varphi_0(\underline{z},n)
\to\exists \underline{z}^{\underline{\beta}} \forall n^0\, \varphi_0(\underline{z},n),
\]
where each type in $\underline{\beta}$ is of the form $\Omega(\tau_m)\ldots(\tau_1)$ and $\varphi_0$ is a quantifier-free formula. The principle $\CCF{0}$ is defined analogously, using $\Algebra$ instead of $\Omega$ in the conditions on the types in the tuple $\underline{\beta}$. 
\end{definition}

We shall show that under suitable restrictions on the formula, a version of Theorem \ref{thm:metaConstructive} holds in the presence of $\CCF{0}$ and $\CCOmega{0}$. However, to provide a treatment, we require a certain ``limited'' use of these principles, which nevertheless suffices for all applications.

Concretely, in what follows, we write $\oplus\, \CCOmega{0}$ (viz.\ $\oplus\, \CCF{0}$) in the specification of a system to signify that, in the context of a derivation, the principle $\CCOmega{0}$ (viz.\ $\CCF{0}$) must not be used in the proof of the premise of an application of the quantifier-free rule of extensionality $\QFER$. This in particular has the consequence that while systems with the quantifier-free extensionality rule, which do not satisfy the deduction theorem as a whole, nevertheless then satisfy the deduction theorem relative to the premise $\CCOmega{0}$ (viz.\ $\CCF{0}$), as can be immediately verified by an induction on the length of the proof. 

This availability of a fragment of the deduction theorem will be crucial for us in the following result to discharge these premises in the context of a bound extraction theorem. Indeed, we will be able to deal with this additional premise of contra-collection as follows: Using the deduction theorem, we move each such application to the premise of an implication and strengthen this principle to a formula $\beth$ of type $\Delta$, motived along the treatment of weak K\H{o}nig's lemma given by Avigad and Feferman \cite{AvigadFeferman1998} (which is slightly different compared to the first treatment of weak K\H{o}nig's lemma in the context of the (monotone) functional interpretation given by Kohlenbach \cite{Kohlenbach1992}). To this compound statement, we then apply the previous semi-constructive metatheorem, which extracts quantitative information on the conclusion that is verified in the context of a so-called $\varepsilon$-weakening $\beth_\varepsilon$ (see e.g.\ Chapter 10 in \cite{Kohlenbach2008}), which can then be discharged.

\begin{theorem}\label{thm:metaConstructiveCC}
Let $\tau$ be admissible and let $\varphi_\exists(x^\tau,n^0)$ be an existential formula of $\mathcal{F}^\omega_i[\PP]$ (with only $x,n$ free) such that all internal quantifiers have admissible types. If
\[
\mathcal{F}_i^{\omega}[\PP] + \FullUBOmega{\omega} + \FullUBF{\omega} \oplus\CCOmega{0} \oplus\CCF{0}\vdash \forall x^\tau\exists n^0 \varphi_\exists(x,n),
\]
then one can extract a functional $\Phi:\mathcal{S}_{\widehat{\tau}}\to\mathbb{N}$ which is primitive recursive  (in the sense of G\"odel \cite{Goedel1958} and Hilbert \cite{Hilbert1926}) and such that
\[
\mathcal{S}^{\omega, \Omega, \Algebra} \models \forall {x^*}^{\widehat{\tau}} \forall x \lesssim_{\tau} x^*\exists n \leq_0 \Phi(x^*) \, \varphi_\exists(x,n)
\]
holds for $\mathcal{S}^{\omega, \Omega, \Algebra}$ defined via any non-empty set $\Omega$, algebra $S\subseteq 2^\Omega$, and probability content $\PP$ on $\Algebra$ (and with suitable interpretations of the additional constants).

We have the same remark (1) as in Theorem \ref{thm:metaConstructive}.
\end{theorem}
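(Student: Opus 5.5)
The strategy is the one outlined before the statement: discharge the uses of contra-collection through the deduction theorem, strengthen each used instance to a type $\Delta$ axiom in the style of Avigad and Feferman, apply Theorem \ref{thm:metaConstructive}, and finally remove the strengthened premise by an $\varepsilon$-weakening argument.

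Since $\CCOmega{0}$ and $\CCF{0}$ are never used in the premise of an application of $\QFER$, the deduction theorem holds relative to them. A given proof uses only finitely many instances. Moving these to the antecedent, we obtain
\[
\mathcal{F}_i^{\omega}[\PP] + \FullUBOmega{\omega} + \FullUBF{\omega}\vdash \bigwedge_j \mathrm{CC}_j \to \forall x^\tau\exists n^0 \varphi_\exists(x,n).
\]
Each instance $\mathrm{CC}_j$ has the form $\forall k\exists \underline{z}\forall n\le k\,\varphi_0(\underline{z},n)\to\exists\underline{z}\forall n\,\varphi_0(\underline{z},n)$, possibly with parameters $\underline{a}$ universally quantified. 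We replace it by a stronger type $\Delta$ sentence $\beth_j$ of the form
\[
\forall \underline{a}\,\forall Z^{\underline{\beta}(0)}\,\exists \underline{z}^{\underline{\beta}}\,\forall k^0\,\Bigl(\forall n\le k\,\varphi_0(Z k,n)\to \forall n\le k\,\varphi_0(\underline{z},n)\Bigr).
\]
That is, from any sequence of approximate witnesses we can choose a single $\underline{z}$ that is good whenever the approximations are. Since $\underline{z}$ has value type $\Omega$ or $\Algebra$, its majorant is trivial (the constant-$1$ functional), so the existential $\exists\underline{z}$ counts as bounded in the sense of type $\Delta$. Intuitionistically $\beth_j$ implies $\mathrm{CC}_j$: given the premise, apply $\AC$ to get $Z$, then obtain $\underline{z}$ from $\beth_j$, which satisfies $\forall n\,\varphi_0(\underline{z},n)$. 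Hence the system plus $\beth:=\bigwedge_j\beth_j$ proves the $\forall\exists$-statement. Since the proof of $\beth_j\to\mathrm{CC}_j$ uses no $\QFER$ on the premise, we can also keep $\beth$ in the antecedent.

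Next we apply Theorem \ref{thm:metaConstructive} to the implication $\beth\to\forall x\exists n\,\varphi_\exists$. Its Dialectica interpretation has the following shape: from majorants of $x$, together with the counterexample functionals produced for the universal part of $\beth$, we obtain a bound $\Phi(x^*)$ on $n$. A bound $K$ on the relevant $k$'s is used in the verification of $\beth$, and $\Phi$ does not depend on $\underline{z}$, since it only depends on majorants. The outcome is the following. In every $\mathcal{S}^{\omega,\Omega,\Algebra}$ satisfying the $K$-weakening $\beth_K$, in which the inner $\forall k$ is restricted to $k\le K(\ldots)$, we have $\exists n\le\Phi(x^*)\,\varphi_\exists(x,n)$.

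The remaining step is to check that $\beth_K$ holds outright in every model. For $k\le K$, take $\underline{z}:=Z(k_0)$, where $k_0\le K$ is the largest $k$ such that $\forall n\le k\,\varphi_0(Zk,n)$ holds, chosen by classical reasoning in the model. Then for every $k\le k_0$ the conclusion holds for this $\underline{z}$, and for $k\le K$ with $k>k_0$ the premise fails, so the implication is vacuous. This works because $\varphi_0$ restricted to $n\le k$ is monotone in $k$. This is the main obstacle, and it needs care. The premise should really be taken as the monotone (prefix-closed) condition, and the weakening must be arranged so that the realizer only queries finitely many $k$, uniformly bounded by a functional depending on majorants. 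If a single $Z(k_0)$ does not work for all queried $k$, I would first try to strengthen $\beth_j$ so that the premise reads $\forall k'\le k\,\forall n\le k'\,\varphi_0(Zk',n)$. Since the extracted $\Phi$ is primitive recursive and independent of the model, the conclusion then holds in all $\mathcal{S}^{\omega,\Omega,\Algebra}$, and remark (1) carries over verbatim.
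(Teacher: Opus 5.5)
Your proposal is correct and follows essentially the same route as the paper. You use the deduction theorem available through $\oplus$, strengthen each contra-collection instance to an Avigad--Feferman style $\beth$ kept in the antecedent, and apply Theorem \ref{thm:metaConstructive} to get bounds on $n$ and on the queried $k$ that are uniform in the trivially majorized $\underline{z}$. You then discharge the bounded weakening of $\beth$ in the model by choosing a maximal good $k$.

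The differences are minor. You Skolemize $\beth$ with a witness sequence $Z$ instead of the paper's inner $\exists\underline{z^*}$, and you read off the interpretation of the implication directly instead of first using $\MARK$ and $\IPU$. Your closing hedge is unnecessary: with $\underline{z}:=Z(k_0)$ the maximal-$k_0$ argument already goes through as you wrote it, because the conclusion $\forall n\le k\,\varphi_0(\underline{z},n)$ is prefix-closed in $k$.
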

\begin{proof}
We only discuss the treatment of $\CCOmega{0}$, the principle $\CCF{0}$ can be treated analogously and simultaneously. So suppose we have 
\[
\mathcal{F}_i^{\omega}[\PP] + \FullUBOmega{\omega} + \FullUBF{\omega} \oplus\CCOmega{0} \vdash \forall x^\tau\exists n^0 \varphi_\exists(x,n).
\]
As a proof is finite, only finitely many instances of the principle $\CCOmega{0}$ are used. For simplicity, we assume it is only one such principle, say for the formula $\psi_0$, to which we still refer by $\CCOmega{0}$. As $\oplus$ allows for the deduction theorem, we have 
\[
\mathcal{F}_i^{\omega}[\PP] + \FullUBOmega{\omega} + \FullUBF{\omega}  \vdash \CCOmega{0} \to \forall x^\tau\exists n^0 \varphi_\exists(x,n).
\]
Consider the formula
\[
\beth:= \exists \underline{z}^{\underline{\beta}}\forall k^0 (\exists \underline{z^*}^{\underline{\beta}}\, \forall m \le_0  k\, \psi_0(\underline{z^*},m)
\to \psi_0(\underline{z},k)).
\]
It is clear that $\beth \to \CCOmega{0}$ is provable in $\mathcal{F}_i^{\omega}[\PP]$, and so we have,
\[
\mathcal{F}_i^{\omega}[\PP] + \FullUBOmega{\omega} + \FullUBF{\omega}  \vdash \beth \to \forall x^\tau\exists n^0 \varphi_\exists(x,n).
\]
Thus, we have
\[
\mathcal{F}_i^{\omega}[\PP] + \FullUBOmega{\omega} + \FullUBF{\omega}  \vdash  \exists \underline{z}^{\underline{\beta}}\forall k^0\,\psi(\underline{z},k) \to \forall x^\tau\exists n^0 \varphi_0(x,n).
\]
where 
\[
\psi(\underline{z},k):= \exists \underline{z^*}^{\underline{\beta}}\forall n \le_0  k\,\varphi_0(\underline{z^*},n)
\to \varphi_0(\underline{z},k).
\]
Using additional epsilon-terms (see e.g.\ \cite{GuenzelKohlenbach2016}), which we suppress, we can regard $\psi$ as quantifier-free. By $\MARK$ and $\IPU$, we have that $\mathcal{F}_i^{\omega}[\PP] + \FullUBOmega{\omega} + \FullUBF{\omega}$ proves
  \[
\mathcal{F}_i^{\omega}[\PP] + \FullUBOmega{\omega} + \FullUBF{\omega}  \vdash \forall x^\tau\forall \underline{z}^{\underline{\beta}} \exists n^0 \exists k^0( \psi(\underline{z},k)\to \varphi_\exists(x,n)).
\]
Using Theorem \ref{thm:metaConstructive}, we get that there exists a functional $\Phi$ such that for all $x\in \mathcal{M}_\tau$, $x^*\in \mathcal{M}_{\widehat{\tau}}$, if $x^*\gtrsim x$, then
\[
\mathcal{M}^{\omega,\Omega, \Algebra} \models  \forall \underline{z}^{\underline{\beta}}\ \exists n\le_0 \Phi(x^*)\exists k\le_0 \Phi(x^*) (\psi(\underline{z},k) 
 \to \varphi_\exists(x,n)).
\]
This implies
\[
\mathcal{M}^{\omega,\Omega, \Algebra} \models   \beth_\varepsilon 
 \to \exists n\le_0 \Phi(x^*)\,\varphi_\exists(x,n),
\]
where we write
\[
\beth_\varepsilon:= \forall i^0 \exists \underline{z}^{\underline{\beta}} \forall k \le_0 i(\exists \underline{z^*}^{\underline{\beta}} \forall m \le_0  k\,\psi_0(\underline{z^*},m)
\to \psi_0(\underline{z},k)).
\]
Now, we have that $\mathcal{M}^{\omega,\Omega, \Algebra} \models \beth_\varepsilon$. To see this, let $i$ be given and let us take $k^* \le i$ maximal such that $\exists \underline{z}\forall m \le k^*\, \psi_0(\underline{z},m)$. We may suppose that such a $k^*$ exists, as if not, then we would in particular have $\forall\underline{z}\neg\psi_0(\underline{z},0)$, which yields that the premise of $\beth_\varepsilon$ is always false, so that $\beth_\varepsilon$ itself is satisfied and the claim already holds true. Given such a maximal $k^*$, take a corresponding $\underline{z}$ with $\forall m \le k^*\, \psi_0(\underline{z},m)$. Let $k \le i$ be arbitrary. If $k \le k^*$, then $\psi_0(\underline{z}, k)$ and so the conclusion of $\beth_\varepsilon$ is satisfied, so that $\beth_\varepsilon$ itself is satisfied. If $k > k^*$, then by maximality of $k^*$, we have $\forall \underline{z} \exists m \le k\, \neg\psi_0(\underline{z}, m)$, which yields that the premise of $\beth_\varepsilon$ is false, so that again $\beth_\varepsilon$ itself is satisfied. Thus, we have $\mathcal{M}^{\omega,\Omega, \Algebra} \models \exists n \le_0 \Phi(x^*)\varphi_\exists(x,n)$ and the result follows as the types are low.
\end{proof}

While the above can potentially be extended to principles $\CCOmega{1}$ and $\CCF{1}$ of contra-collection at type $1$, we do not pursue this here further and remain with the above result.

\section{A systematic approach to representing probabilistic statements}\label{sec:trans}

In this section, we present a general approach for formally representing almost sure statements in the language of the formal systems presented in Section \ref{sec:prelim}. To that end, we will rely on the notions of outer (and inner) content (or measure) of a formula in this language, inspired by the corresponding notion(s) from ordinary probability theory.

As we will illustrate in the following Section \ref{sec:meta}, this representation can then be used, in combination with a monotone variant of G\"odel's functional interpretation due to Kohlenbach, to systematically assign (and hence extract) the computational content of measure-theoretic statements from semi-constructive or even classical proofs (if chained with a negative translation), resulting ultimately in dedicated logical metatheorems for such statements in the style of proof mining.

To motivate our formal representation of probabilistic statements, consider a content space $(\Omega, \Algebra, \PP)$ and suppose that we are given a property $\varphi(\varpi)$ with a variable $\varpi$ over $\Omega$ which induces a measurable property
\[
A_\varphi:=\{\varpi\in \Omega\mid \varphi(\varpi)\}\in \Algebra.
\]
In that case, we can formally assign a probability to the statement $\varphi$ by setting
\[
\PP(\varphi):=\PP(A_\varphi)
\]
so that $\varphi$ being true almost surely is precisely captured by the statement that $\PP(\varphi)=1$. If we are interested in capturing this approach formally, we encounter an immediate logical problem.
The approach requires both a formal notion of a “measurable formula” and a comprehension principle to construct the corresponding measurable sets. However, this comprehension principle is logically highly complicated due to hidden quantifiers, and their inclusion would “taint” the translation with auxiliary complexity that stems only from this arbitrary representation.

Instead, we opt for a representation that does not attempt to eliminate the quantifiers via such comprehension, but rather leaves the formula (and, as we will see later, also its computational content) intact in an appropriate way. The device that facilitates this lift is essentially the outer measure (see e.g.\ \cite{Klenke2008}), or more appropriately the outer content since we are working over content spaces (see e.g.\ \cite{RaoRao1983}), a measure-theoretic device which assigns a probability to a generic set, measurable or not, only by putting it into relation with other measurable sets. Concretely, associated to the probability content $\PP$ is the outer content $\PP^*$ of $\PP$ that assigns to each set $A\subseteq \Omega$, measurable or not, the value
\[
\PP^*(A):=\inf\{\PP(B)\mid A\subseteq B\text{ and }B\in\Algebra\},
\]
which coincides with the content $\PP(A)$ when $A$ is measurable. Instead of considering $\PP(A_\varphi)=1$ as a representation of $\varphi$ being true almost surely, we may thus consider $\PP^*(A_\varphi)=1$. While mathematically the same, this notion $\PP^*(A_\varphi)=1$, based on the outer content, can, due to its simpler definition, be formally captured rather immediately by the statement
\[
\forall A^\Algebra(\forall \varpi^\Omega(\varphi(\varpi) \to \varpi \in A) \to \PP(A)=_\RR 1).
\]
Now, as with the outer content in general, it is actually irrelevant for this formulation whether $A_\varphi$ is measurable or not. In the same way, as also immediately apparent from its phrasing, the above property is suitable for arbitrary statements $\varphi(\varpi^\Omega)$. It is precisely this formulation that underlies our representation of probabilistic statements.

However, to arrive at a general working notion, we will first require some technical considerations. First, to be able to quantify over the degree of probability, we do not want to restrict to a representation of almost sure notions only. Therefore, we will consider the more general abbreviation
\[
\PP[\varphi] \ge \lambda \; :\equiv \; \forall A^\Algebra (\forall \varpi^\Omega(\varphi(\varpi) \to \varpi \in A) \to \PP(A) \ge_\RR \lambda)
\]
where $\varphi(\varpi^\Omega)$ is a given, arbitrary, formula as before and $\lambda^1$ is a free variable of type $1$. This notion, of which ``$\varphi$ being true almost surely'' is a special case, by setting $\lambda=1$, comes naturally equipped with a dual notion of low probability,
\[
\PP[\varphi] \leq \lambda\; :\equiv \;\PP[\neg\varphi]\geq 1-\lambda.
\]
Further, as we will later see, this dual notion can be equivalently captured via
\[
\PP[\varphi] \le \lambda \; \leftrightarrow \; \forall A^\Algebra (\forall \varpi^\Omega(\varpi \in A \to \varphi(\varpi) ) \to \PP(A) \le_\RR \lambda)
\]
and, similarly to the abbreviation $\PP[\varphi] \ge \lambda $, which is motivated using the outer content, this dual notion can be understood as being motivated by the inner content $\PP_*$ associated with $\PP$ (see e.g.\ \cite{RaoRao1983}), defined for generic sets $A\subseteq\Omega$ via
\[
\PP_*(A):=\sup\{\PP(B)\mid A\supseteq B\text{ and }B\in\Algebra\}.
\]
As these two abbreviations are dually linked via the negation of the formula in that way, it is in particular justified to drop the super- and subscripts used in the above notation of the outer and inner content as they yield a single underlying concept of probability, high or low, for a formula.

Now, for the purpose of dealing with classical reasoning, these formal representations of probabilistic formulas provide the precise internal notions needed to represent the associated probability-theoretic statements in a suitable way, which allows for applications in proof mining. However, in this paper, we will also be concerned with semi-constructive reasoning, in which the above property proves to be wholly unsuitable for this purpose. Indeed, the above formulation of the outer/inner content of a formula is inherently classical as $\varphi$, appearing in a double negative position, is treated by essentially all proof interpretations as being inside a double negation. This has the effect that, for example, the modified realizability interpretation of $\PP[\varphi]\geq 1$ (i.e.\ our representation of $\varphi$ being true almost surely) just extracts the empty realizer, while the functional interpretation extracts a realizer similar to that for the double negation of $\varphi$.

Thus, to obtain a notion that uniformly represents such probabilistic properties, and moreover preserves the computational meaning of $\varphi$ while suitably lifting it to the probabilistic level in both classical and constructive settings, we consider the following abbreviation as our formal representation:

\begin{definition} \label{def-abbreviations}
Let $\varphi(\varpi^\Omega)$ be a formula and $\lambda^1$ be a free variable. We define the abbreviation
\begin{equation} \label{def-P-gt}
    \PP[\varphi] \ge \lambda \; :\equiv \; \forall A^\Algebra (\PP(A) <_\RR \lambda \to \exists \varpi^\Omega\in A^c\,\varphi(\varpi)).\tag{P}
\end{equation}
Dually, we define
\begin{equation} \label{def-P-lt}
    \PP[\varphi] \leq \lambda \; :\equiv \; \PP[\overline{\varphi}]\geq 1-\lambda\tag{D}
\end{equation}
where $\overline{\varphi}$ is the De Morgan dual of $\varphi$.
\end{definition}

Classically, the above notion is equivalent to our previous outer/inner content, so that in that context, no expressivity is lost. Constructively, however, our ``official'' definition is much more productive as $\varphi$, and by duality also $\overline{\varphi}$ in the low probability variant, appear positively. It should be noted that the motivation for utilizing the De Morgan dual $\overline{\varphi}$ instead of $\neg\varphi$ is again due to constructive considerations, as $\overline{\varphi}$ provides a constructively much more productive notion than the object $\neg\varphi$, for which, for example, the modified realizability interpretation again just extracts the empty realizer.

Following standard probability theory notation, we generally omit parameters of type $\Omega$ in expressions of the form $\PP[\varphi]\geq\lambda$. E.g., we write $\PP[Qx^\tau\varphi(x)]\geq\lambda$ for $\PP[Qx^\tau\varphi(x,\varpi)]\geq\lambda$, where $Q\in\{\forall,\exists\}$ is some quantifier, and we write $\PP[A]\geq\lambda$ for $\PP[\varpi\in A]\geq\lambda$, where $A$ is of type $S$.

\section{The algebra of probabilistic statements}\label{sec:algebra}

In this section, we study the basic algebra of abbreviations (\ref{def-P-gt}) and (\ref{def-P-lt}). To that end, we begin by noting some trivial monotonicity properties and fixing some further notation in that context: Clearly, if $\lambda\geq_\mathbb{R}\mu$ and $\PP[\varphi]\geq\lambda$, then also $\PP[\varphi]\geq\mu$. Likewise, we have that $\lambda\leq_\mathbb{R}\mu$ and $\PP[\varphi]\leq\lambda$ imply $\PP[\varphi]\leq\mu$. Further, we write $\PP[\varphi]>\lambda$ or $\PP[\varphi]<\lambda$ for the respective non-strict variants resulting from replacing $\geq_{\mathbb{R}}$ or $\leq_{\mathbb{R}}$ in the above by $>_{\mathbb{R}}$ or $<_{\mathbb{R}}$, respectively. It should be noted, however, that these only serve a theoretical purpose later on as $>_{\mathbb{R}}$ and $<_{\mathbb{R}}$ are existential, which will be convenient in some places. The mathematically more appropriate phrasing of these strict relations of the outer and inner content would rather be given by
\[
\exists\mu^\mathbb{Q}>0\left( \PP[\varphi] \geq \lambda+\mu\right) \text{ and }\exists\mu^\mathbb{Q}>0\left( \PP[\varphi] \leq \lambda-\mu\right),
\]
but if such a relation is required in our context, we will always use the above formulas explicitly.

Now as mentioned briefly before, the definition \eqref{def-P-lt} of low probability, which is simply connected to the definition \eqref{def-P-gt} of high probability by duality, can be given a formal definition inspired by inner contents analogous to the official definition of \eqref{def-P-gt} inspired by outer contents:

\begin{proposition} \label{pro:duality} For any formula $\varphi(\varpi^\Omega)$:
\[
\mathcal{F}^\omega_i[\PP] \vdash \PP[\varphi] \le \lambda \leftrightarrow \forall A^\Algebra (\PP(A) >_\RR \lambda \to \exists \varpi^\Omega \in A\,\overline{\varphi}(\varpi)).
\]
\end{proposition}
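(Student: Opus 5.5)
The plan is to unfold both sides and pass between them by the substitution $A\mapsto A^c$. By Definition \ref{def-abbreviations}, the left-hand side $\PP[\varphi]\le\lambda$ is by (\ref{def-P-lt}) the formula $\PP[\overline{\varphi}]\ge 1-\lambda$. By (\ref{def-P-gt}) this is
\[
\forall A^\Algebra\left(\PP(A)<_\RR 1-\lambda\to \exists \varpi^\Omega\in A^c\,\overline{\varphi}(\varpi)\right).
\]
So the task is to show, in $\mathcal{F}^\omega_i[\PP]$, that this is equivalent to $\forall B^\Algebra(\PP(B)>_\RR\lambda\to\exists\varpi\in B\,\overline{\varphi}(\varpi))$. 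Both directions will be instantiation arguments, and they use no classical logic.

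First I would isolate two auxiliary facts provable in $\mathcal{F}^\omega_i[\PP]$.
\begin{enumerate}
\item[(a)] $\PP(A^c)=_\RR 1-\PP(A)$ for all $A^\Algebra$.
\item[(b)] $\varpi\in (A^c)^c\leftrightarrow \varpi\in A$ for all $\varpi$ and $A$.
\end{enumerate}
Fact (b) follows from the element axioms for complement, namely $\varpi\in A^c\leftrightarrow \varpi\not\in A$. Since $\in$ is a decidable type-$0$ relation ($\in\varpi A=_00$), the double negation $\neg\neg(\varpi\in A)$ collapses to $\varpi\in A$ intuitionistically.

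Fact (a) is the step I expect to need the most care, because the system axiomatizes $\PP$ via inclusion--exclusion plus monotonicity rather than via disjoint additivity. I would apply inclusion--exclusion to $A$ and $A^c$, which gives
\[
\PP(A\cup A^c)+\PP(A\cap A^c)=\PP(A)+\PP(A^c).
\]
Next, $A\cup A^c$ and $\Omega$ contain the same elements, so monotonicity in both directions gives $\PP(A\cup A^c)=\PP(\Omega)=1$, using the axiom $\PP(\Omega)=_\RR1$. Likewise $A\cap A^c$ and $\emptyset$ contain the same elements, so $\PP(A\cap A^c)=\PP(\emptyset)=0$. Both set inclusions are purely universal statements about type-$0$ membership and are provable pointwise. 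The point is that this avoids any appeal to extensionality of $\PP$ with respect to $=_\Algebra$ beyond what monotonicity already gives.

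With (a) in hand, the real-number arithmetic is routine: $\PP(B^c)<_\RR 1-\lambda\leftrightarrow \PP(B)>_\RR\lambda$, and this is intuitionistically valid since $<_\RR$ is a $\Sigma^0_1$ relation compatible with the ring operations.

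For the direction ``$\rightarrow$'', let $B$ with $\PP(B)>_\RR\lambda$ be given and instantiate $A:=B^c$. Then $\PP(B^c)<_\RR1-\lambda$, so the hypothesis yields $\varpi\in(B^c)^c$ with $\overline{\varphi}(\varpi)$, and (b) turns this into $\varpi\in B$. For the direction ``$\leftarrow$'', let $A$ with $\PP(A)<_\RR 1-\lambda$ be given and instantiate $B:=A^c$. By (a), $\PP(A^c)=1-\PP(A)>_\RR\lambda$, so we directly obtain $\varpi\in A^c$ with $\overline{\varphi}(\varpi)$, which is exactly what (\ref{def-P-gt}) requires. Since only these instantiations, (a), (b) and intuitionistic real arithmetic are used, the equivalence holds in $\mathcal{F}^\omega_i[\PP]$ for arbitrary $\varphi$, as claimed in Proposition \ref{pro:duality}.
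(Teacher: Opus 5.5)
Your proof is correct and follows essentially the paper's argument: unfold $\PP[\varphi]\le\lambda$ to $\PP[\overline{\varphi}]\ge 1-\lambda$, instantiate at complements, and use $(A^c)^c=_\Algebra A$ together with $\PP(A)+\PP(A^c)=_\RR 1$. The paper simply asserts these two facts, whereas you derive them (from decidability of $\in$, and from inclusion--exclusion with monotonicity plus the content axioms $\PP(\Omega)=_\RR 1$ and $\PP(\emptyset)=_\RR 0$), and you also write out the converse direction that the paper leaves as ``similarly''.
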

\begin{proof} Working over $\mathcal{F}^\omega_i[\PP]$ and fixing $\lambda^1$, suppose that $\PP[\varphi] \le \lambda$. We have by definition that
\[
\forall A^\Algebra (\PP(A) <_\RR 1-\lambda \to \exists \varpi^\Omega\in A^c\,\overline{\varphi}(\varpi)).
\]
Instantiating $A$ with $A^c$, this implies 
\[
\forall A^\Algebra (\PP(A^c) <_\RR 1-\lambda \to \exists \varpi^\Omega\in (A^c)^c\,\overline{\varphi}(\varpi)).
\]
Since $\mathcal{F}^\omega_i[\PP]$ proves that $(A^c)^c =_\Algebra A$ and $1=_\mathbb{R}\PP(A^c)+\PP(A)$ we obtain
\[
\forall A^\Algebra (\PP(A) >_\RR \lambda \to \exists \varpi^\Omega\in A \; \overline{\varphi}(\varpi)).
\]
Similarly, we can show that $\forall A^\Algebra (\PP(A) >_\RR \lambda \to \exists \varpi^\Omega\in A \; \overline{\varphi}(\varpi))$ implies $\PP[\varphi] \le \lambda$.
\end{proof}

This, in particular, also justifies dropping the sub- and superscripts as discussed in the previous section.

Another key observation is that measurable properties cause the outer and inner interpretations to collapse to the underlying content. For that, we introduce the abbreviation 
\[
   \PP[\varphi]=\lambda \; :\equiv \; \PP[\varphi]\leq\lambda \; \land \; \PP[\varphi]\geq\lambda.
\]

\begin{proposition}\label{pro:measurable} Over $\mathcal{F}^\omega_i[\PP]$, suppose that $\varphi$ is measurable in the sense that for some $A_0^\Algebra$:
\[
\forall \varpi^\Omega\left(\varphi(\varpi)\leftrightarrow \varpi\in A_0\right).
\]
Then for any free variable $\lambda^1$:
\[
\PP[\varphi]\geq \lambda\leftrightarrow \PP(A_0)\geq_\mathbb{R}\lambda 
\quad \text{and} \quad 
\PP[\varphi]\leq \lambda\leftrightarrow \PP(A_0)\leq_\mathbb{R}\lambda.
\]
In particular $\PP[\varphi]=\PP(A_0)$.
\end{proposition}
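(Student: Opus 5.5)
The plan is to prove the two equivalences separately, using only the defining axioms of $\PP$ as a content together with $\MARK$, which is part of $\mathcal{F}^\omega_i[\PP]$. The statement $\PP[\varphi]=\PP(A_0)$ then follows by conjunction. Throughout I use that $\geq_\mathbb{R}$ is a negated (i.e.\ $\Pi^0_1$) relation, $\lambda\leq_\mathbb{R}\mu\leftrightarrow\neg(\mu<_\mathbb{R}\lambda)$, so intuitionistically it may be established by refutation.

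\emph{Direction $\PP[\varphi]\geq\lambda\to\PP(A_0)\geq_\mathbb{R}\lambda$.} Assume $\PP(A_0)<_\mathbb{R}\lambda$ towards a contradiction. Instantiating \eqref{def-P-gt} with $A:=A_0$ gives some $\varpi\in A_0^c$ with $\varphi(\varpi)$. The measurability hypothesis then yields $\varpi\in A_0$, which contradicts $\varpi\in A_0^c$ by the axioms on $\in$ and $(\cdot)^c$. Hence $\neg(\PP(A_0)<_\mathbb{R}\lambda)$, which is exactly the claim.

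\emph{Direction $\PP(A_0)\geq_\mathbb{R}\lambda\to\PP[\varphi]\geq\lambda$.} Let $A$ satisfy $\PP(A)<_\mathbb{R}\lambda$, and put $B:=A_0\cap A^c$. By additivity applied to the disjoint decomposition $A_0\subseteq_\Algebra B\cup A$, together with monotonicity, we get $\PP(A_0)\leq\PP(B)+\PP(A)$. Hence $\PP(B)\geq\PP(A_0)-\PP(A)>_\mathbb{R}0$. The main step is to extract a point $\varpi\in B$ constructively. Suppose $\neg\exists\varpi^\Omega(\varpi\in B)$. Then $B\subseteq_\Algebra\emptyset$, so monotonicity of $\PP$ (w.r.t.\ $\subseteq_\Algebra$, which is among the axioms) gives $\PP(B)\leq\PP(\emptyset)=0$, a contradiction. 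Thus $\neg\neg\exists\varpi(\varpi\in B)$ holds. Since $\varpi\in B$ is quantifier-free (an equation at type $0$), $\MARK$ yields $\exists\varpi(\varpi\in B)$. This $\varpi$ lies in $A^c$, and from $\varpi\in A_0$ the measurability hypothesis gives $\varphi(\varpi)$, as required by \eqref{def-P-gt}.

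\emph{The dual statement $\PP[\varphi]\leq\lambda\leftrightarrow\PP(A_0)\leq_\mathbb{R}\lambda$.} By \eqref{def-P-lt} this is $\PP[\overline{\varphi}]\geq1-\lambda\leftrightarrow\PP(A_0^c)\geq_\mathbb{R}1-\lambda$, using $\PP(A_0^c)=1-\PP(A_0)$. So it suffices to rerun the two arguments above with $\overline{\varphi}$ in place of $\varphi$ and $A_0^c$ in place of $A_0$; equivalently, one can go through the inner-content form of Proposition \ref{pro:duality}. The forward direction only needs $\overline{\varphi}(\varpi)\to\varpi\notin A_0$. This follows from the intuitionistically valid implication $\overline{\varphi}\to\neg\varphi$, proved by a routine induction on $\varphi$ using decidability of prime formulas, and then from the hypothesis. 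The backward direction needs the converse $\varpi\in A_0^c\to\overline{\varphi}(\varpi)$. This is where I expect the real obstacle: $\neg\varphi\to\overline{\varphi}$ is not intuitionistically valid in general. I would handle it by reading the measurability hypothesis as also covering the dual, i.e.\ $\forall\varpi(\overline{\varphi}(\varpi)\leftrightarrow\varpi\in A_0^c)$, which is classically automatic. If that reading is not intended, I would restrict the backward dual direction to the classical system $\mathcal{F}^\omega[\PP]$, where $\overline{\varphi}\leftrightarrow\neg\varphi$ holds.
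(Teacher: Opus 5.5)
Your proof of $\PP[\varphi]\geq\lambda\leftrightarrow\PP(A_0)\geq_\mathbb{R}\lambda$ is the paper's argument, except that you invoke $\MARK$ explicitly. The paper uses it tacitly: its step from $A_0\not\subseteq A$ to a point $\varpi\in A^c$ with $\varpi\in A_0$ is itself an instance of $\MARK$. Your detour through $B=A_0\cap A^c$ and additivity is harmless but unnecessary. If $\neg\exists\varpi\,(\varpi\in A_0\wedge\varpi\notin A)$, then decidability of $\varpi\in A$ already gives $A_0\subseteq_\Algebra A$, and monotonicity contradicts $\PP(A)<_\mathbb{R}\lambda\leq_\mathbb{R}\PP(A_0)$. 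Your forward half of the dual is also fine. It is in fact more careful than the paper's version, since it uses only the intuitionistically valid implication $\overline{\varphi}\to\neg\varphi$.

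For the remaining half, $\PP(A_0)\leq_\mathbb{R}\lambda\to\PP[\varphi]\leq\lambda$, the paper simply ``notes'' that $\forall\varpi^\Omega(\overline{\varphi}(\varpi)\leftrightarrow\varpi\notin A_0)$ and reruns the first part for $\overline{\varphi}$ and $A_0^c$. That is exactly the step you flag, and your suspicion is justified. This half is not provable in $\mathcal{F}^\omega_i[\PP]$ for arbitrary $\varphi$, so no better argument can close the gap.

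Here is a counterexample. Take $\varphi:\equiv\exists m^0\,(g(m)=_0 0)\wedge\forall m^0\,(g(m)\neq_0 0)$, with a parameter $g^1$ and no dependence on $\varpi$. Then $\forall\varpi(\varphi\leftrightarrow\varpi\in\emptyset)$ is intuitionistically provable, and $\PP(\emptyset)\leq_\mathbb{R}0$. So the claimed implication gives $\PP[\varphi]\leq 0$, i.e.\ $\PP[\overline{\varphi}]\geq 1$. Instantiating the definition with $A:=\emptyset$ yields $\overline{\varphi}$, which is (equivalent to) $\forall m\,(g(m)\neq_0 0)\vee\exists m\,(g(m)=_0 0)$, i.e.\ $\Sigma^0_1$-$\mathsf{LEM}$. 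But $\mathcal{F}^\omega_i[\PP]$ does not prove this. Theorem \ref{thm:metaConstructive} would yield a single $b$ with $\exists m\leq b\,(g(m)=0)\vee\forall m\,(g(m)\neq 0)$ for all $g\leq_1\lambda k.1$. That fails for the $g$ whose only zero is at $b+1$.

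So it is the statement that must be weakened, and your repairs are the right ones. You can add the dual measurability $\forall\varpi(\overline{\varphi}(\varpi)\leftrightarrow\varpi\in A_0^c)$ as a hypothesis, or work in $\mathcal{F}^\omega[\PP]$. More generally, you can assume $\neg\varphi\to\overline{\varphi}$. This is available, e.g., for quantifier-free or purely existential $\varphi$, and via $\MARK$ for purely universal ones. With any of these your proof is complete, and on this point it is more careful than the paper's.
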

\begin{proof} First, note that we have
\begin{align*}
\PP[\varphi]\geq\lambda & \equiv \forall A^\Algebra\left(\PP(A)<_\mathbb{R}\lambda\to \exists \varpi\in A^c\,\varphi(\varpi)\right)\\
&\rightarrow \PP(A_0)<_\mathbb{R}\lambda\to \exists \varpi\in A^c_0\,\varphi(\varpi)\\
&\leftrightarrow \PP(A_0)<_\mathbb{R}\lambda\to \exists \varpi\in A^c_0\,(\varpi\in A_0)\\
&\leftrightarrow \neg(\PP(A_0)<_\mathbb{R}\lambda)
\end{align*}
over $\mathcal{F}^\omega_i[\PP]$, as $A^c\cap A=_\Algebra\emptyset$. Conversely, assume $\neg(\PP(A_0)<_\mathbb{R}\lambda)$ and let $\PP(A)<_\mathbb{R}\lambda$. Now, we have that $\neg(\PP(A_0)<_\mathbb{R}\lambda)\leftrightarrow \PP(A_0)\geq_\mathbb{R}\lambda$. Then $A_0\not\subseteq A$, by monotonicity of $\PP$, and so there is an $\varpi\in A^c$ with $\varpi\in A_0$, that is $\varphi(\varpi)$. As $A$ was arbitrary, we have shown
\[
\neg(\PP(A_0)<_\mathbb{R}\lambda)\rightarrow \forall A^\Algebra\left(\PP(A)<_\mathbb{R}\lambda\to \exists \varpi\in A^c\,\varphi(\varpi)\right)\leftrightarrow \PP[\varphi]\geq\lambda
\]
over $\mathcal{F}^\omega_i[\PP]$. So, we have shown $\PP[\varphi]\geq\lambda\leftrightarrow \PP(A_0)\geq_\mathbb{R}\lambda$ over $\mathcal{F}^\omega_i[\PP]$. For the dual claim, simply note that $\forall\varpi^\Omega\left(\overline{\varphi}(\varpi)\leftrightarrow \varpi\not\in A_0\right)$ so that using \eqref{def-P-lt}, we get
\[
\PP[\varphi]\leq\lambda
\; \leftrightarrow \; 
\PP[\overline{\varphi}]\geq 1-\lambda
\; \leftrightarrow \;
\PP(A_0^c)\geq_{\mathbb{R}}1-\lambda
\; \leftrightarrow \;
\PP(A_0)\leq_\mathbb{R}\lambda,
\]
using again that $1=_\mathbb{R}\PP(A^c)+\PP(A)$, for any $A^\Algebra$.
\end{proof}

\begin{remark}
In Proposition \ref{pro:measurable}, note that already $\forall \varpi^\Omega\left(\varphi(\varpi)\rightarrow \varpi\in A_0\right)$ implies
\[
\PP[\varphi]\geq \lambda\rightarrow \PP(A_0)\geq_\mathbb{R}\lambda\text{ and }\PP(A_0)\leq_\mathbb{R}\lambda\rightarrow \PP[\varphi]\leq \lambda
\]
and $\forall \varpi^\Omega\left(\varpi\in A_0\rightarrow \varphi(\varpi)\right)$ implies
\[
\PP(A_0)\geq_\mathbb{R}\lambda\rightarrow \PP[\varphi]\geq \lambda\text{ and }\PP[\varphi]\leq \lambda\rightarrow \PP(A_0)\leq_\mathbb{R}\lambda.
\]
\end{remark}

The outer content considered in this paper is the outer content $\PP^*$ relative to the content $\PP$, and in that way a special instance of the general notion of an outer content (or measure) studied in measure theory. In the probabilistic case, this class of general outer contents on $\Omega$ comprises all set functions $\nu:2^\Omega\to [0,1]$ that satisfy the following three properties:
\begin{enumerate}
\item $\nu(\emptyset)=0$,
\item for all $A,B\in 2^\Omega$: if $A\subseteq B$, then $\nu(A)\leq\nu(B)$,
\item for all $A_0,\dots,A_m\in 2^\Omega$: $\nu(\bigcup_{n=0}^m A_n)\leq\sum_{n=0}^m \nu(A_n)$.
\end{enumerate}

It should be noted that outer measures, compared to outer contents, are generally required to be sub-$\sigma$-additive (see e.g.\ \cite{Halmos1950}) instead of just subadditive, but this property does not hold true already for the outer content $\PP^*$ relative to a content $\PP$ if the latter is only formulated over a field (see e.g.\ Remark 4.1.5 in \cite{RaoRao1983}).

For our particular outer content $\PP^*$ formalized using $\PP[\cdot]$, we can replicate these key properties formally in the system $\mathcal{F}^\omega_i[\PP]$, identifying general sets $A \in 2^\Omega$ with arbitrary formulas $\varphi(\varpi)$ in the language of $\mathcal{F}^\omega_i[\PP]$.

In particular, a formal variant of the first property (1) already follows from the preceding Lemma \ref{pro:measurable}, by which any set specified by $\varphi(\varpi)$ that equals $\emptyset$, that is $\forall \varpi^\Omega\neg\varphi(\varpi)$, already satisfies $\PP[\varphi]=0$.

The next lemma now gives a formal account of the second property, formalizing inclusions $A\subseteq B$ among general sets $A,B\in 2^\Omega$ via implications $\forall \varpi^\Omega (\varphi(\varpi)\to\psi(\varpi))$ for general formulas $\varphi(\varpi)$, $\psi(\varpi)$ in the language of $\mathcal{F}^\omega_i[\PP]$. To model a comparison between outer contents, which in our system are not numbers but actually formal expressions involving comparisons, we introduce the notation 
\[
\PP[\varphi] \leq \PP[\psi] \; :\equiv \; \forall \lambda^1 (\PP[\varphi] \geq \lambda \to \PP[\psi] \geq \lambda).
\]

\begin{proposition}\label{pro:implicationOuter}
Over $\mathcal{F}^\omega_i[\PP]$, assume $\varphi(\varpi^\Omega)$ and $\psi(\varpi^\Omega)$ are formulas such that $\varphi(\varpi) \to \psi(\varpi)$ outside of a null set $A_0$, i.e.
\[ \PP(A_0)=_\mathbb{R} 0 \wedge \forall \varpi^\Omega\in A_0^c (\varphi(\varpi)\to\psi(\varpi)). \]
Then $\PP[\varphi]\leq\PP[\psi]$.
\end{proposition}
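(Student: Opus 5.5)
We unfold the definitions and work intuitionistically in $\mathcal{F}^\omega_i[\PP]$. Fix $\lambda^1$ with $\PP[\varphi]\geq\lambda$; we must show $\PP[\psi]\geq\lambda$. So let $A^\Algebra$ with $\PP(A)<_\mathbb{R}\lambda$ be given; we need some $\varpi\in A^c$ with $\psi(\varpi)$. The idea is to enlarge $A$ by the null set, i.e.\ to apply the hypothesis $\PP[\varphi]\geq\lambda$ to the event $B:=A\cup A_0$. The witness then lies outside both $A$ and $A_0$, and the almost-sure implication transfers it from $\varphi$ to $\psi$.

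First we check that $\PP(B)<_\mathbb{R}\lambda$. By subadditivity of the content, $\PP(A\cup A_0)\leq_\mathbb{R}\PP(A)+\PP(A_0)=_\mathbb{R}\PP(A)$. Subadditivity follows from the inclusion–exclusion axiom together with nonnegativity of $\PP$, both available in $\mathcal{F}^\omega_i[\PP]$. Combining $\PP(B)\leq_\mathbb{R}\PP(A)$ with $\PP(A)<_\mathbb{R}\lambda$ gives $\PP(B)<_\mathbb{R}\lambda$. This step is constructively unproblematic, since $\leq$ followed by $<$ yields $<$ on the reals.

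Now instantiate $\PP[\varphi]\geq\lambda$ at $B$ to obtain $\varpi\in B^c$ with $\varphi(\varpi)$. Since $B^c=_\Algebra A^c\cap A_0^c$, which is provable from the axioms on $\in$, $\cup$ and $(\cdot)^c$, we get both $\varpi\in A^c$ and $\varpi\in A_0^c$. The hypothesis $\forall\varpi\in A_0^c(\varphi(\varpi)\to\psi(\varpi))$ then yields $\psi(\varpi)$, so $\varpi$ witnesses $\exists\varpi\in A^c\,\psi(\varpi)$. As $A$ and $\lambda$ were arbitrary, this shows $\PP[\psi]\geq\lambda$ for every $\lambda$, that is, $\PP[\varphi]\leq\PP[\psi]$.

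The only point needing care is the subadditivity/monotonicity estimate. Real inequalities are $\Pi^0_1$/$\Sigma^0_1$, so we should check that $\PP(B)<_\mathbb{R}\lambda$ is derivable in the intuitionistic system without case distinctions. It is: $\PP(B)\leq_\mathbb{R}\PP(A)$ is a universal statement obtained directly from the content axioms, and transitivity of $\leq$ with $<$ holds intuitionistically. Everything else is pure logic, so no use of Markov's principle or choice is needed.
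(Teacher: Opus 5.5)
Your proof is correct and follows the paper's argument essentially verbatim: you apply $\PP[\varphi]\geq\lambda$ to the enlarged event $A\cup A_0$, get $\PP(A\cup A_0)<_\mathbb{R}\lambda$ from subadditivity, and then use $(A\cup A_0)^c=_\Algebra A^c\cap A_0^c$ with the almost-sure implication to turn the $\varphi$-witness into a $\psi$-witness in $A^c$. Your closing remark, that the real-number estimate needs no case distinction and so the argument stays intuitionistic, is a correct constructive check that the paper leaves implicit.
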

\begin{proof} Let $\lambda^1$ be arbitrary and suppose $\PP[\varphi] \geq \lambda$, i.e.
\[
\forall A^\Algebra (\PP(A) <_\RR \lambda \to \exists \varpi^\Omega\in A^c\,\varphi(\varpi)).
\]
Let $A^\Algebra$ be arbitrary and suppose $\PP(A)<_\mathbb{R}\lambda$. Then we have
\[
\PP(A\cup A_0)\leq_\RR \PP(A)+\PP(A_0)<_\RR\lambda
\]
so that the above implies $\exists \varpi^\Omega\in (A\cup A_0)^c\,\varphi(\varpi)$. Provably in $\mathcal{F}^\omega_i[\PP]$, we have $(A\cup A_0)^c=_\Algebra A^c\cap A_0^c$, and hence, we get that $\varphi(\varpi)$ implies $\psi(\varpi)$ as $\varpi\in A_0^c$. Since $\varpi \in A^c$ and $A$ was arbitrary, we have shown that $\PP[\psi] \geq \lambda$, i.e.
\[
\forall A^\Algebra (\PP(A) <_\RR \lambda \to \exists \varpi^\Omega\in A^c\,\psi(\varpi)).
\]
As $\lambda$ was arbitrary, it follows that $\PP[\varphi]\leq\PP[\psi]$.
\end{proof}

In particular, the above Proposition \ref{pro:implicationOuter} establishes a type of \emph{almost-sure} extensionality of the expression $\PP[\varphi]\geq\lambda$ in $\varphi$.

Let us now consider the last property, $\nu(\bigcup_{n=0}^m A_n)\leq\sum_{n=0}^m \nu(A_n)$, for arbitrary $A_0,\dots,A_m\in 2^\Omega$. In the following, we will show how we can formally recognize this property, phrased using $\PP[\cdot]$, in our system $\mathcal{F}^\omega_i[\PP]$, using a formula $\varphi(n,\varpi)$ to represent the sets $A_n$ and $\exists n\leq_0 m\, \varphi(n,\varpi)$ to represent their finite union $\bigcup_{n=0}^m A_n$.

To formally encompass this property, we require a representation of a finite sum of outer contents, which is again not completely immediate as outer contents are not numbers but formal comparisons. Given a formula $\varphi(n,\varpi)$, we in that vein write
\[
\sum_{n=0}^m\PP[\varphi(n)]\geq\lambda:\equiv \forall\tilde{\lambda}^{1(0)}_{(\cdot)}\left(\lambda\geq_\mathbb{R}\sum_{n=0}^m\tilde{\lambda}_n\to \exists n\leq_0 m(\PP[\varphi(n)]\geq \tilde{\lambda}_n)\right).
\]
We now obtain the subadditivity of the outer content:

\begin{lemma}\label{lem:subAddOuter}
Over $\mathcal{F}^\omega[\PP]$, let $\varphi(n^0,\varpi^\Omega)$ be a formula. Then 
\[ \PP[\exists n\leq_0 m\,\varphi(n)]\leq\sum_{n=0}^m\PP[\varphi(n)], \]
that is
\[
\forall \lambda^1 \left(\PP[\exists n\leq_0 m\,\varphi(n)] \geq \lambda \to \sum_{n=0}^m\PP[\varphi(n)] \geq \lambda\right).
\]
\end{lemma}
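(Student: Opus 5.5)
We argue classically in $\mathcal{F}^\omega[\PP]$, by contraposition. Fix $\lambda$ with $\PP[\exists n\leq_0 m\,\varphi(n)]\geq\lambda$, and let $\tilde{\lambda}_{(\cdot)}$ satisfy $\lambda\geq_\mathbb{R}\sum_{n=0}^m\tilde{\lambda}_n$. Suppose, for a contradiction, that $\PP[\varphi(n)]\geq\tilde{\lambda}_n$ fails for every $n\leq m$. Unfolding (\ref{def-P-gt}), we classically obtain for each $n\leq m$ an event $A_n$ such that
\[
\PP(A_n)<_\mathbb{R}\tilde{\lambda}_n \quad\text{and}\quad \forall\varpi\in A_n^c\,\neg\varphi(n,\varpi).
\]
Since there are only $m+1$ values of $n$, this is a bounded collection over type-$0$ indices. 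We therefore use $\QFAC$ (after making the matrix quantifier-free via the classical prenex/Skolem form), or equivalently an induction on $m$ building a finite sequence, to get a single function $n\mapsto A_n$ of type $\Algebra(0)$.

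Next set $B:=\bigcup_{n=0}^m A_n$, formed via the recursively defined finite union. By finite subadditivity of the content, which is provable in $\mathcal{F}^\omega[\PP]$ from monotonicity and inclusion–exclusion by induction on $m$, we get
\[
\PP(B)\leq_\mathbb{R}\sum_{n=0}^m\PP(A_n).
\]
The right-hand side is strictly below $\sum_{n=0}^m\tilde{\lambda}_n$, because finitely many strict inequalities of reals add up to a strict inequality; this follows by induction on $m$, since $<_\mathbb{R}$ is $\Sigma^0_1$ and a rational gap can be summed. Hence $\PP(B)<_\mathbb{R}\sum_n\tilde{\lambda}_n\leq_\mathbb{R}\lambda$.

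We now apply the hypothesis $\PP[\exists n\leq_0 m\,\varphi(n)]\geq\lambda$ to $B$. This yields $\varpi\in B^c$ and $n\leq m$ with $\varphi(n,\varpi)$. But $B^c\subseteq_\Algebra A_n^c$ by the finite-union membership properties, so $\neg\varphi(n,\varpi)$, a contradiction. Hence $\exists n\leq_0 m\,(\PP[\varphi(n)]\geq\tilde{\lambda}_n)$, and since $\tilde{\lambda}$ was arbitrary, $\sum_{n=0}^m\PP[\varphi(n)]\geq\lambda$.

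The main obstacle is the choice step: gathering the witnesses $A_n$ into one sequence when the matrix $\neg\varphi$ is an arbitrary formula. The plan is to avoid full $\AC$ by proving the whole statement by induction on $m$ directly. In the inductive step we would combine the event obtained for the first $m$ indices with $A_{m+1}$, using binary unions and binary subadditivity. This only needs classical logic and the induction available in $\WEPAomega$. It also explains why the lemma is stated over the classical system: the initial contraposition $\neg\forall A(\dots)\to\exists A(\dots)$ is not intuitionistic.
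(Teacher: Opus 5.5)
Your argument is correct and is essentially the paper's proof: argue by contradiction, extract for each $n\leq m$ an event $A_n$ with $\PP(A_n)<_\mathbb{R}\tilde{\lambda}_n$ outside of which $\varphi(n)$ fails, bound $\PP(\bigcup_{n=0}^m A_n)$ by finite subadditivity, and apply the hypothesis to this union to reach a contradiction. The only addition is that you make explicit how the $A_n$ are assembled into one finite union, which the paper leaves implicit; your induction on $m$ does this soundly, whereas the $\QFAC$ variant as you phrase it would not apply directly to an arbitrary $\varphi$ (though $\DC$ in $\mathcal{F}^\omega[\PP]$ supplies the needed countable choice for arbitrary formulas anyway).
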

\begin{proof}
Suppose for a contradiction that $\PP[\exists n\leq_0 m\,\varphi(n)]\geq\lambda$ but $\sum_{n=0}^m\PP[\varphi(n)]\geq\lambda$ fails. Then there is a $\tilde{\lambda}^{1(0)}_{(\cdot)}$ such that $\lambda\geq_\RR\sum_{n=0}^m\tilde{\lambda}_n$ but $\neg\PP[\varphi(n)]\geq \tilde{\lambda}_n$, for all $n\leq_0 m$. In particular, for any $n\leq_0 m$ there exists an $A_n$ such that $\PP(A_n)<_\RR\tilde{\lambda}_n$ but $\neg\varphi(n,\varpi)$ for all $\varpi\in A_n^c$. However we then have
\[
\PP\left(\bigcup_{n=0}^m A_n\right)\leq_\RR \sum_{n=0}^m\PP(A_n)<_\RR \sum_{n=0}^m\tilde{\lambda}_n\leq_\RR \lambda.
\]
As $\PP[\exists n\leq_0 m\,\varphi(n)]\geq\lambda$ holds, we get
\[
\exists\varpi\in \left(\bigcup_{n=0}^m A_n\right)^c\exists n\leq_0 m\,\varphi(n,\varpi).
\]
In particular, we have $\varpi\in A_n^c$ and $\varphi(n,\varpi)$. This contradicts that $\neg\varphi(n,\varpi)$ for all $\varpi\in A_n^c$ as established previously.
\end{proof}

We now move to the quantifier structure of a formula. Here, we begin with the following partial prenexation results:

\begin{proposition}\label{pro:initialPrenex} For any type $\tau$, any formula $\varphi(x^\tau,\varpi^\Omega)$, and any free variable $\lambda^1$, $\mathcal{F}^\omega_i[\PP]$ proves:
\begin{enumerate}
    \item $\PP[\forall x^\tau\varphi(x)]\ge \lambda \to \forall x^\tau(\PP[\varphi(x)]\ge \lambda)$,
    \item $\forall \mu^\mathbb{Q}> 0\, \exists x^\tau(\PP[\varphi(x)]\leq \lambda + \mu) \to \PP[\forall x^\tau\varphi(x)]\leq \lambda$.
\end{enumerate}
Further, $\mathcal{F}^\omega_i[\PP]$ also proves:
\begin{enumerate}
    \setcounter{enumi}{2}
    \item $\PP[\exists x^\tau\varphi(x)]\leq \lambda\to \forall x^\tau(\PP[\varphi(x)]\leq \lambda)$,
    \item $\forall \mu^\mathbb{Q}> 0\, \exists x^\tau(\PP[\varphi(x)]\ge \lambda - \mu) \to \PP[\exists x^\tau\varphi(x)]\ge \lambda$.
\end{enumerate}
\end{proposition}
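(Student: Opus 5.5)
The four claims split into two pairs. Items (1) and (4) concern $\PP[\cdot]\geq\lambda$ and I would prove them directly from the definition \eqref{def-P-gt}. Items (3) and (2) should then follow from (1) and (4), respectively, by applying \eqref{def-P-lt} to the De Morgan dual.

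For (1), I would work in $\mathcal{F}^\omega_i[\PP]$ and assume $\PP[\forall x^\tau\varphi(x)]\geq\lambda$. Fix $x^\tau$ and $A^\Algebra$ with $\PP(A)<_\RR\lambda$. The hypothesis gives a $\varpi\in A^c$ with $\forall x\,\varphi(x,\varpi)$. Instantiating at the fixed $x$ yields $\exists\varpi\in A^c\,\varphi(x,\varpi)$. This argument is purely intuitionistic.

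For (4), assume $\forall\mu^\mathbb{Q}>0\,\exists x^\tau(\PP[\varphi(x)]\geq\lambda-\mu)$ and let $A^\Algebra$ satisfy $\PP(A)<_\RR\lambda$. The key point is that $<_\RR$ is a $\Sigma^0_1$-formula. A proof of $\PP(A)<_\RR\lambda$ therefore intuitionistically provides a rational $\mu>0$ with $\PP(A)<_\RR\lambda-\mu$, obtained by reading off the witness index from the representation of reals. For this $\mu$, the hypothesis gives an $x$ with $\PP[\varphi(x)]\geq\lambda-\mu$. Applying that to $A$ produces a $\varpi\in A^c$ with $\varphi(x,\varpi)$, hence $\exists\varpi\in A^c\,\exists x\,\varphi(x,\varpi)$, which is $\PP[\exists x\varphi(x)]\geq\lambda$. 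Extracting this $\mu$ in an intuitionistically acceptable way is the only step that needs any care. I would handle it with the standard fact about the representation of reals, namely that $a<_\RR b$ unfolds to $\exists n\,(\ldots)$ and yields $2^{-n}$-separation.

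For (3) and (2), I would use that the De Morgan dual is defined recursively, so that $\overline{\exists x\varphi(x)}\equiv\forall x\,\overline{\varphi}(x)$ and $\overline{\forall x\varphi(x)}\equiv\exists x\,\overline{\varphi}(x)$, with $\overline{\varphi(x)}$ the same formula as $\overline{\varphi}(x)$.
\begin{itemize}
\item For (3), the assumption $\PP[\exists x\varphi(x)]\leq\lambda$ unfolds to $\PP[\forall x\,\overline{\varphi}(x)]\geq1-\lambda$. Item (1) applied to $\overline{\varphi}$ gives $\forall x\,(\PP[\overline{\varphi}(x)]\geq1-\lambda)$, which is $\forall x\,(\PP[\varphi(x)]\leq\lambda)$.
\item For (2), the premise reads $\forall\mu\,\exists x\,(\PP[\overline{\varphi}(x)]\geq(1-\lambda)-\mu)$. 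This uses the provable identity $1-(\lambda+\mu)=_\RR(1-\lambda)-\mu$, together with extensionality of $\PP[\cdot]\geq\cdot$ in its real argument, which follows since $\lambda$ only occurs inside $<_\RR$. Item (4) applied to $\overline{\varphi}$ and $1-\lambda$ then gives $\PP[\exists x\,\overline{\varphi}(x)]\geq1-\lambda$, which is exactly $\PP[\forall x\varphi(x)]\leq\lambda$.
\end{itemize}
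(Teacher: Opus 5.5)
Your proof is correct. The key step is the same as the paper's: strict real inequality is $\Sigma^0_1$, so $\PP(A)<_\RR\lambda$ yields a rational $\mu>0$ with $\PP(A)<_\RR\lambda-\mu$. This lets the $\forall\mu\,\exists x$ of the hypothesis be absorbed into the witness for $\varpi$.

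The decomposition differs. The paper proves (1) and (2) directly, handling (2) through the inner-content characterization of Proposition \ref{pro:duality} and the equivalence $\PP(A)>_\RR\lambda\leftrightarrow\exists\mu^\mathbb{Q}>0\,(\PP(A)>_\RR\lambda+\mu)$. It then obtains (3) and (4) via \eqref{def-P-lt}. You instead prove (1) and (4) directly from \eqref{def-P-gt} and obtain (3) and (2) by duality.

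Your direct argument for (4) is the outer-content mirror image of the paper's argument for (2). It bypasses Proposition \ref{pro:duality} and the complement bookkeeping with $\PP(A^c)=1-\PP(A)$ that it requires.

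More substantively, you only unfold \eqref{def-P-lt} in its defining direction, applying (1) and (4) to $\overline{\varphi}$. Deriving (4) from (2), as the paper does, works differently: applying (2) to $\overline{\varphi}$ only gives (4) for $\overline{\overline{\varphi}}$. One must then identify $\overline{\overline{\varphi}}$ with $\varphi$, a step your route never needs.

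The cost on your side is the small real-arithmetic step $1-(\lambda+\mu)=_\RR(1-\lambda)-\mu$, plus extensionality of $\PP[\cdot]\geq\cdot$ in its real argument. You correctly identify and justify both. The paper's route needs analogous arithmetic when it passes through \eqref{def-P-lt}.
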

\begin{proof} We are working over $\mathcal{F}^\omega_i[\PP]$ and we only show (1) and (2). Items (3) and (4) follow from these by \eqref{def-P-lt}.
\begin{enumerate}
    \item We have
    \begin{align*}
    \PP[\forall x^\tau\varphi (x)]\geq\lambda 
    & \; \equiv \; \forall A^\Algebra (\PP(A) <_\RR \lambda \to \exists \varpi^\Omega\in A^c\,\forall x^\tau\varphi (x,\varpi))\\
    & \;\rightarrow\; \forall A^\Algebra (\PP(A) <_\RR \lambda \to \forall x^\tau\,\exists \varpi^\Omega\in A^c\,\varphi (x,\varpi))\\
    & \;\leftrightarrow\; \forall x^\tau\,\forall A^\Algebra (\PP(A) <_\RR \lambda \to \exists \varpi^\Omega\in A^c\,\varphi (x,\varpi))\\
    & \;\equiv\; \forall x^\tau\left(\PP[\varphi (x)]\geq\lambda\right),
    \end{align*}
    where all the (bi-)implications are constructively true.
    \item We have, by Proposition \ref{pro:duality}, that
    \begin{align*}
    \forall \mu^\mathbb{Q}> 0\, \exists x^\tau(\PP[\varphi(x)]\leq \lambda + \mu) 
    & \; \equiv \;  \forall \mu^\mathbb{Q}> 0\, \exists x^\tau\, \forall A^\Algebra (\PP(A) >_\RR \lambda+\mu \to \exists \varpi^\Omega\in A\,\overline{\varphi} (x,\varpi))\\
    & \; \rightarrow \; \forall \mu^\mathbb{Q}> 0\, \forall A^\Algebra (\PP(A) >_\RR \lambda+\mu \to \exists \varpi^\Omega\in A\,\exists x^\tau\,\overline{\varphi} (x,\varpi))\\
    & \; \leftrightarrow \; \forall A^\Algebra (\exists \mu^\mathbb{Q}> 0\left( \PP(A) >_\RR \lambda+\mu\right) \to \exists \varpi^\Omega\in A\,\exists x^\tau\overline{\varphi} (x,\varpi))\\
    & \; \leftrightarrow \; \forall A^\Algebra (\PP(A) >_\RR \lambda \to \exists \varpi^\Omega\in A\,\overline{\forall x^\tau\varphi(x,\varpi)})\\
    & \; \equiv \; \PP[\forall x^\tau\varphi(x)]\leq\lambda,
    \end{align*}
    where again all the (bi-)implications are constructively true.\qedhere
\end{enumerate}
\end{proof}

While the above principles are generally valid, the converse variants of these results are only valid under a monotonicity assumption, and in particular only provable in the presence of a uniform boundedness principle. Moreover, in the intuitionistic context, the converse statements subtly rely either on uniform boundedness or contra-collection, depending on polarity, which are conflated in the classical context. As the picture here starts to become more complicated, we hence first focus on the classical case. As only a fragment of the full uniform boundedness principle is permissible in a classical context while aiming for computable bounds, we have to make restrictions on the quantifier complexity of the formulas in that context.

\begin{proposition} \label{pro:furtherPrenex}
Over $\mathcal{F}^\omega[\PP]$, let $\tau$ be a pure type, let $\varphi(x^\tau, \varpi^\Omega)$ be a formula and let $\lambda^1$ be an arbitrary free variable. Then:
\begin{enumerate}
\item If $\varphi$ is a $\forall$-formula and if $\varphi(x,\varpi)$ is anti-monotone in $x$, i.e.
\[
\forall \varpi^\Omega\,\forall x^\tau,y^\tau(y\geq_\tau x\land\varphi(y,\varpi) \to \varphi(x,\varpi)),
\]
then $\mathcal{F}^\omega[\PP] + \UBOmega{\tau}$ proves
\[
\PP[\forall x^\tau\varphi(x)]\ge \lambda \; \leftrightarrow \; \forall x^\tau(\PP[\varphi(x)]\ge \lambda)
\]
and $\mathcal{F}^\omega[\PP] + \UBF{\tau}$ proves
\[
\forall \mu^\mathbb{Q}> 0\, \exists x^\tau(\PP[\varphi(x)]< \lambda + \mu) \; \leftrightarrow \; \PP[\forall x^\tau\varphi(x)]\leq \lambda.
\]
\item If $\varphi$ is an $\exists$-formula and if $\varphi(x,\varpi)$ is monotone in $x$, i.e.
\[
\forall \varpi^\Omega\,\forall x^\tau,y^\tau(y\geq_\tau x\land\varphi(x,\varpi) \to \varphi(y,\varpi)),
\]
then $\mathcal{F}^\omega[\PP] + \UBOmega{\tau}$ proves
\[
\PP[\exists x^\tau\varphi(x)]\leq \lambda \; \leftrightarrow \; \forall x^\tau(\PP[\varphi(x)]\leq \lambda)
\]
and $\mathcal{F}^\omega[\PP] + \UBF{\tau}$ proves
\[
\forall \mu^\mathbb{Q}> 0\, \exists x^\tau(\PP[\varphi(x)]> \lambda - \mu) \; \leftrightarrow \; \PP[\exists x^\tau\varphi(x)]\ge \lambda.
\]
\end{enumerate}
\end{proposition}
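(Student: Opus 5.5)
We prove (1) directly, reducing to a single application of the uniform boundedness principle in each direction, and then obtain (2) from (1) by duality. In each of the four equivalences one direction is already an instance of Proposition \ref{pro:initialPrenex}, up to trivial adjustments between strict and non-strict inequalities. So the only real work is the converse direction, where we have to pull the quantifier $\forall x^\tau$ inside $\PP[\cdot]$. The recurring idea is the following. The relevant failure statement has the shape ``for all $\varpi^\Omega$ (or all $A^\Algebra$) there is an $x$ with $\neg\varphi(x,\varpi)$''. Uniform boundedness bounds such an $x$ by a single $\chi$, and anti-monotonicity then upgrades ``$\neg\varphi(x)$ for some $x\le_\tau\chi$'' to ``$\neg\varphi(\chi)$''. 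Indeed, if $\varphi(\chi,\varpi)$ held, then $\chi\ge_\tau x$ would give $\varphi(x,\varpi)$.

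For the first equivalence in (1), assume $\forall x^\tau(\PP[\varphi(x)]\ge\lambda)$ and let $A^\Algebra$ with $\PP(A)<_\RR\lambda$ be given. Arguing classically, suppose $\forall\varpi\in A^c\,\exists x^\tau\,\neg\varphi(x,\varpi)$, i.e.\ $\forall \varpi^\Omega\exists x^\tau(\varpi\in A\lor\neg\varphi(x,\varpi))$. Since $\varphi$ is a $\forall$-formula, the matrix is (equivalent to) an existential formula with $A$ as a parameter. We apply $\UBOmega{\tau}$ with the tuple $\underline{z}$ consisting of $\varpi$ alone and with a dummy $k,x$. This yields a $\chi$ such that every $\varpi\in A^c$ satisfies $\neg\varphi(x,\varpi)$ for some $x\le_\tau\chi$, hence $\neg\varphi(\chi,\varpi)$ by anti-monotonicity. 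This contradicts $\PP[\varphi(\chi)]\ge\lambda$ instantiated at $A$. Hence $\exists\varpi\in A^c\,\forall x\,\varphi(x,\varpi)$, as required.

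For the second equivalence in (1), we first use Proposition \ref{pro:duality} and the De Morgan dual to rewrite both sides. The statement $\PP[\forall x\varphi]\le\lambda$ becomes $\forall A(\PP(A)>_\RR\lambda\to\exists\varpi\in A\,\exists x\,\overline{\varphi}(x,\varpi))$. Analogously, $\PP[\varphi(x)]<\lambda+\mu$ becomes $\forall A(\PP(A)\ge_\RR\lambda+\mu\to\exists\varpi\in A\,\overline{\varphi}(x,\varpi))$. The direction from left to right of the equivalence follows as in Proposition \ref{pro:initialPrenex}(2). For the converse, fix $\mu>0$. From $\PP[\forall x\varphi]\le\lambda$ we get
\[
\forall A^\Algebra\,\exists x^\tau\,\exists\varpi^\Omega\left(\PP(A)\ge_\RR\lambda+\mu\to \varpi\in A\land\overline{\varphi}(x,\varpi)\right),
\]
using that $\PP(A)\ge\lambda+\mu$ implies $\PP(A)>\lambda$. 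After prenexing, the matrix is existential: the premise is $\Pi^0_1$, and $\overline{\varphi}$ is existential since $\varphi$ is universal. Now $\UBF{\tau}$, applied with $\underline{z}=A$, yields a uniform $\chi$. By anti-monotonicity, $\overline{\varphi}(x,\varpi)$ with $x\le_\tau\chi$ gives $\overline{\varphi}(\chi,\varpi)$, so $x:=\chi$ witnesses $\PP[\varphi(\chi)]<\lambda+\mu$.

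Part (2) then follows from (1) applied to $\overline{\varphi}$. If $\varphi$ is an $\exists$-formula that is monotone in $x$, then $\overline{\varphi}$ is a $\forall$-formula that is anti-monotone in $x$. Moreover, $\overline{\exists x\varphi}=\forall x\overline{\varphi}$, and by \eqref{def-P-lt} the statements about $\PP[\cdot]\le$ and $\PP[\cdot]\ge$ translate into each other with $\lambda$ replaced by $1-\lambda$. We expect the main obstacle to be formal rather than mathematical: one has to check that the instances really fit the shape of $\UBOmega{\tau}$ and $\UBF{\tau}$. This concerns the free parameters $A$ and $\lambda,\mu$, the trivial choice of $\alpha$ and $y$ so that the $\min_\alpha$ clause is vacuous, the admissibility of the types of $\varpi$ and of the quantifiers hidden in the real-number relations, and the reading of $\le_\tau$ at pure type $\tau$ as the pointwise order under which the anti-monotonicity assumption is stated.
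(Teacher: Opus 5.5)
Your proposal is correct and follows essentially the same route as the paper. In each case the nontrivial direction comes from one application of $\UBOmega{\tau}$ (with $\underline{z}=\varpi$ and $A$ as a parameter), respectively $\UBF{\tau}$ (with $\underline{z}=A$). Anti-monotonicity then collapses the bounded witness $x\le_\tau\chi$ to $\chi$, the remaining directions are Proposition \ref{pro:initialPrenex}, and (2) follows from (1) via \eqref{def-P-lt}. The only difference is cosmetic: you argue by contradiction for a fixed $A$ and stay with the official forms \eqref{def-P-gt} and Proposition \ref{pro:duality}, whereas the paper first rewrites into the classical outer/inner-content formulations such as $\forall A(\forall\varpi(\varphi\to\varpi\in A)\to\PP(A)\ge_\RR\lambda)$ before invoking uniform boundedness.
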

\begin{proof}
We only prove (1). Item (2) again follows using \eqref{def-P-lt}. For the first part of item (1), by Proposition \ref{pro:initialPrenex} we are left with showing
\[
\forall x^\tau(\PP[\varphi(x)]\ge \lambda)\to \PP[\forall x^\tau\varphi(x)]\ge \lambda. 
\]
Suppose $\forall x^\tau(\PP[\varphi(x)]\ge \lambda)$, i.e.
\[ 
    \forall x^\tau\,\forall A^\Algebra(\PP(A) <_\RR \lambda \to \exists \varpi^\Omega \in A^c \, \varphi(x,\varpi)).
\]
Using classical logic, this is equivalent to
\[
    \forall A^\Algebra(\exists x^\tau\,\forall \varpi^\Omega(\varphi(x,\varpi) \to \varpi \in A) \to \PP(A) \ge_\RR \lambda).
\]
Now fix $A$ and assume $\forall \varpi^\Omega\, \exists x^\tau(\varphi(x,\varpi) \to \varpi \in A)$. Using $\UBOmega{\tau}$, we get
\[
\exists \chi^\tau\,\forall\varpi^\Omega\, \exists x\leq_\tau\chi\left(\varphi(x,\varpi)\to \varpi\in A\right)
\]
as $\varphi$ is a $\forall$-formula. As $\varphi$ is anti-monotone, we get 
\[
\exists x^\tau\,\forall\varpi^\Omega\left(\varphi(x,\varpi)\to \varpi\in A\right).
\]
By the above, we get $\PP(A) \geq_\mathbb{R} \lambda$. Hence, we have shown
\[ \forall A^\Algebra(\forall \varpi^\Omega(\forall x^\tau\, \varphi(x,\varpi) \to \varpi \in A) \to \PP(A) \ge_\RR \lambda) \]
which is classically equivalent to $\PP[\forall x^\tau\varphi(x)]\ge \lambda$. \\[2mm]
For the second part of item (1), by Proposition \ref{pro:initialPrenex}, we are similarly left with showing 
\[
\PP[\forall x^\tau\varphi(x)]\leq \lambda\to \forall \mu^\mathbb{Q}> 0\, \exists x^\tau(\PP[\varphi(x)]< \lambda + \mu). 
\]
So suppose $\PP[\forall x^\tau\varphi(x)]\leq \lambda$. Using classical logic and Proposition \ref{pro:duality}, this is equivalent to
\[
    \forall A^\Algebra(\forall \varpi^\Omega(\varpi \in A \to \forall x^\tau\,\varphi(x,\varpi) ) \to \PP(A) \le_\RR \lambda)
\]
and hence
\[
    \forall \mu^\mathbb{Q}> 0\, \forall A^\Algebra\,\exists x^\tau(\forall \varpi^\Omega(\varpi \in A \to \varphi(x,\varpi) ) \to \PP(A) <_\RR \lambda+\mu).
\]
As $\varphi$ is a $\forall$-formula, $\UBF{\tau}$ yields 
\[
\forall \mu^\mathbb{Q}> 0\, \exists \chi^\tau\, \forall A^\Algebra\,\exists x\leq_\tau\chi(\forall \varpi^\Omega(\varpi \in A \to \varphi(x,\varpi) ) \to \PP(A) <_\RR \lambda+\mu)
\]
and anti-monotonicity of $\varphi$ yields
\[
\forall \mu^\mathbb{Q}> 0\, \exists x^\tau\, \forall A^\Algebra(\forall \varpi^\Omega(\varpi \in A \to \varphi(x,\varpi) ) \to \PP(A) <_\RR \lambda+\mu)
\]
which is classically equivalent, using Proposition \ref{pro:duality}, to $\forall \mu^\mathbb{Q}> 0\, \exists x^\tau(\PP[\varphi(x)]< \lambda + \mu)$.
\end{proof}

We now consider the constructive case. There, the second equivalence of both items of Proposition \ref{pro:furtherPrenex} remains admissible in the presence of the constructively (at least as far as the Dialectica interpretation is concerned) acceptable principle of independence of premise for universal formulas. Further, the restrictions previously in place on the uniform boundedness principle can now be lifted as uniform boundedness is admissible without restrictions on the formula in a semi-constructive context while aiming for the extraction of computable bounds.

\begin{proposition}\label{pro:furtherPrenexConst}
Over $\mathcal{F}^\omega_i[\PP]$, let $\tau$ be a pure type, let $\varphi(x^\tau, \varpi^\Omega)$ be any formula and let $\lambda^1$ be an arbitrary free variable. Then:
\begin{enumerate}
\item If $\varphi$ is anti-monotone in $x$, then $\mathcal{F}^\omega_i[\PP] + \FullUBF{\tau}$ proves
\[
\forall \mu^\mathbb{Q}> 0\, \exists x^\tau(\PP[\varphi(x)]< \lambda + \mu) \leftrightarrow \PP[\forall x^\tau\varphi(x)]\leq \lambda.
\]
\item If $\varphi$ is monotone in $x$, then $\mathcal{F}^\omega_i[\PP] + \FullUBF{\tau}$ proves
\[
\forall \mu^\mathbb{Q}> 0\, \exists x^\tau(\PP[\varphi(x)]> \lambda - \mu) \leftrightarrow \PP[\exists x^\tau\varphi(x)]\ge \lambda.
\]
\end{enumerate}
\end{proposition}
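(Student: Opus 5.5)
The plan is to prove item (1) directly and obtain item (2) by duality via \eqref{def-P-lt}, as in the proofs of Propositions \ref{pro:initialPrenex} and \ref{pro:furtherPrenex}. For (1), the left-to-right direction needs no uniform boundedness. $\PP[\varphi(x)]<\lambda+\mu$ trivially implies $\PP[\varphi(x)]\leq\lambda+\mu$, since $<_\mathbb{R}$ implies $\leq_\mathbb{R}$. So the hypothesis yields $\forall\mu^\mathbb{Q}>0\,\exists x^\tau(\PP[\varphi(x)]\leq\lambda+\mu)$, and Proposition \ref{pro:initialPrenex}(2) gives $\PP[\forall x^\tau\varphi(x)]\leq\lambda$. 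This is already intuitionistic.

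The real work is the right-to-left direction. I would assume $\PP[\forall x^\tau\varphi(x)]\leq\lambda$ and use Proposition \ref{pro:duality} to rewrite it as
\[
\forall A^\Algebra\left(\PP(A)>_\RR\lambda\to\exists\varpi^\Omega\in A\,\exists x^\tau\,\overline{\varphi}(x,\varpi)\right),
\]
since $\overline{\forall x\varphi}=\exists x\overline{\varphi}$. Fix $\mu^\mathbb{Q}>0$. The premise $\PP(A)>_\RR\lambda$ is existential, so $\IPU$ cannot be applied to it directly. I would therefore replace it by the universal premise $\PP(A)\geq_\RR\lambda+\mu/2$, which implies it. This gives $\forall A^\Algebra(\PP(A)\geq_\RR\lambda+\mu/2\to\exists x^\tau\exists\varpi\in A\,\overline{\varphi}(x,\varpi))$. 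Now $\IPU$ moves $\exists x$ in front:
\[
\forall A^\Algebra\,\exists x^\tau\left(\PP(A)\geq_\RR\lambda+\mu/2\to\exists\varpi\in A\,\overline{\varphi}(x,\varpi)\right).
\]
Next I apply $\FullUBF{\tau}$, with the event $A$ playing the role of the parameter $\underline{z}$ of type $\Algebra$ and with no $k,x$-parameters (take them dummy). Since $\FullUBF{\tau}$ is admissible for arbitrary formulas, no complexity restriction arises. This yields $\chi^\tau$ such that for every $A$ there is an $x\leq_\tau\chi$ satisfying the matrix.

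The step I expect to be the main obstacle is collapsing the bounded $\exists x\leq_\tau\chi$ to the single value $x:=\chi$. This needs $\overline{\varphi}(x,\varpi)\wedge x\leq\chi\to\overline{\varphi}(\chi,\varpi)$, i.e.\ monotonicity of the De Morgan dual. Classically this is just the contrapositive of anti-monotonicity of $\varphi$, but intuitionistically it is not automatic. I would try to read the anti-monotonicity hypothesis in its dual form, as the statement "$\varphi$ anti-monotone" is meant in the constructive setting, namely $\forall\varpi,x,y(x\leq_\tau y\wedge\overline{\varphi}(x,\varpi)\to\overline{\varphi}(y,\varpi))$. Failing that, I would check that this dual form is derivable by induction on the De Morgan structure for the formulas in question.

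Once this is granted, I obtain $\forall A(\PP(A)\geq_\RR\lambda+\mu/2\to\exists\varpi\in A\,\overline{\varphi}(\chi,\varpi))$. Restricting to the events with $\PP(A)>_\RR\lambda+\mu/2$, Proposition \ref{pro:duality} gives $\PP[\varphi(\chi)]\leq\lambda+\mu/2$. The slack $\mu/2$ then converts this into the strict $\PP[\varphi(\chi)]<\lambda+\mu$, since $\lambda+\mu/2<_\RR\lambda+\mu$. This witnesses $\exists x^\tau$ for the given $\mu$. Item (2) then follows by applying (1) to $\overline{\varphi}$ and unfolding \eqref{def-P-lt}, using $1-(1-\lambda)=\lambda$ and $\overline{\overline{\varphi}}=\varphi$.
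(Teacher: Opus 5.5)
Your argument is the paper's proof with the roles of the two items exchanged: the paper proves (2) directly by the same chain (rewriting the strict premise $\PP(A)<_\RR\lambda$ as $\exists\mu^\mathbb{Q}>0\,(\PP(A)\le_\RR\lambda-\mu)$ and pulling $\mu$ out, then $\IPU$, then $\FullUBF{\tau}$ with the event as the $\Algebra$-parameter, then monotonicity of $\varphi$ to collapse $\exists x\le_\tau\chi$ to $x:=\chi$), and it obtains (1) from (2) via \eqref{def-P-lt}. That duality step tacitly needs $\overline{\varphi}$ to be monotone, so the dual reading of anti-monotonicity you adopt is exactly the paper's implicit assumption, and your fallback of deriving it from the literal hypothesis fails in general (e.g.\ for $\forall\exists$-formulas it amounts to a $\Sigma^0_2$-instance of double negation elimination); the paper's direction also has the small advantage of using its hypothesis literally and not needing $\overline{\overline{\varphi}}=\varphi$.
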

\begin{proof}
As before, we only need to prove one of the items as the other follows using \eqref{def-P-lt}. Here, we only show (2). By Proposition \ref{pro:initialPrenex}, also similar to before, we are only left to show 
\[
\PP[\exists x^\tau\,\varphi(x)]\ge \lambda\rightarrow 
\forall \mu>_\RR 0\, \exists x^\tau(\PP[\varphi(x)]> \lambda - \mu).
\]
For that, note that we have
\begin{align*}
\PP[\exists x^\tau\varphi(x)]\ge \lambda
& \;\equiv\; \forall A^\Algebra (\PP(A) <_\RR \lambda \to \exists \varpi^\Omega\in A^c\,\exists x^\tau\varphi(x,\varpi))\\
& \;\leftrightarrow\; \forall A^\Algebra (\exists \mu^\mathbb{Q}>0\left(\PP(A) \leq_\RR \lambda-\mu\right) \to \exists x^\tau\,\exists \varpi^\Omega\in A^c\,\varphi(x,\varpi))\\
& \;\leftrightarrow\; \forall \mu^\mathbb{Q}>0\,\forall A^\Algebra (\PP(A) \leq_\RR \lambda-\mu \to \exists x^\tau\,\exists \varpi^\Omega\in A^c\,\varphi(x,\varpi))\\
& \;\leftrightarrow\; \forall \mu^\mathbb{Q}>0\,\forall A^\Algebra\, \exists x^\tau(\PP(A) \leq_\RR \lambda-\mu \to \exists \varpi^\Omega\in A^c\,\varphi(x,\varpi)),
\end{align*}
where all equivalences are constructive but the last, which follows using $\mathrm{IP}^\omega_\forall$. Using $\UBF{\tau}$, we get 
\[
\forall \mu^\mathbb{Q}>0\,\exists \chi^\tau\, \forall A^\Algebra\, \exists x\leq_\tau \chi(\PP(A) \leq_\RR \lambda-\mu \to \exists \varpi^\Omega\in A^c\,\varphi(x,\varpi))
\]
and as $\varphi$ is monotone, we have
\[
\forall \mu^\mathbb{Q}>0\,\exists x^\tau\, \forall A^\Algebra(\PP(A) \leq_\RR \lambda-\mu \to \exists \varpi^\Omega\in A^c\,\varphi(x,\varpi))
\]
which is $\forall \mu^\mathbb{Q}> 0\, \exists x^\tau(\PP[\varphi(x)]> \lambda - \mu)$.
\end{proof}

Now, the first equivalence of both items of Proposition \ref{pro:furtherPrenex} was previously derived using classical logic, essentially invoking the converse of the uniform boundedness principle. Intuitionistically, this essentially just amounts to an application of the principle of contra-collection, which in the context of the present paper is, however, restricted not only in terms of the permissible formulas but also regarding the type, resulting in the following rather restricted result:

\begin{proposition}\label{pro:evenFurtherPrenexConst}
Over $\mathcal{F}^\omega_i[\PP]$, let $\varphi_0(x^0, \varpi^\Omega)$ be a quantifier-free formula and let $\lambda^1$ be an arbitrary free variable. Then:
\begin{enumerate}
\item If $\varphi_0$ is anti-monotone in $x$, then $\mathcal{F}^\omega_i[\PP] \oplus \CCOmega{0}$ proves
\[
\PP[\forall x^0\varphi_0(x)]\ge \lambda \; \leftrightarrow \; \forall x^0(\PP[\varphi_0(x)]\ge \lambda).
\]
\item If $\varphi_0$ is monotone in $x$, then $\mathcal{F}^\omega_i[\PP] \oplus \CCOmega{0}$ proves
\[
\PP[\exists x^0\varphi_0(x)]\leq \lambda \; \leftrightarrow \; \forall x^0(\PP[\varphi_0(x)]\leq \lambda).
\]
\end{enumerate}
\end{proposition}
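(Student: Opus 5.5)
By duality \eqref{def-P-lt}, item (2) follows from item (1) applied to $\overline{\varphi_0}$. A monotone quantifier-free $\varphi_0$ has an anti-monotone quantifier-free dual, and $\overline{\exists x\varphi_0}=\forall x\overline{\varphi_0}$. Statement (2) also asserts $\forall x(\PP[\overline{\varphi_0}(x)]\geq 1-\lambda)$. So it suffices to prove (1). The forward direction is Proposition \ref{pro:initialPrenex}(1). It remains to show that $\forall x^0(\PP[\varphi_0(x)]\geq\lambda)$ implies $\PP[\forall x^0\varphi_0(x)]\geq\lambda$.

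Fix $A^\Algebra$ with $\PP(A)<_\RR\lambda$. We must produce $\varpi\in A^c$ with $\forall x^0\,\varphi_0(x,\varpi)$. The plan is to apply $\CCOmega{0}$ with the single variable $\underline{z}=\varpi$ of type $\Omega$ and the quantifier-free matrix $\psi_0(\varpi,n):\equiv \varpi\in A^c\wedge\varphi_0(n,\varpi)$. Here $A$ occurs only as a parameter, and $\varpi\in A^c$ is prime, so $\psi_0$ is quantifier-free.

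For the premise of contra-collection, given $k^0$, instantiate the hypothesis at $x=k$ and at our $A$. This yields $\varpi\in A^c$ with $\varphi_0(k,\varpi)$. Anti-monotonicity then gives $\varphi_0(n,\varpi)$ for all $n\leq_0 k$, using $k\geq_0 n$. Hence $\forall k^0\exists\varpi\,\forall n\leq_0 k\,\psi_0(\varpi,n)$. Contra-collection now delivers $\exists\varpi\,\forall n^0\,\psi_0(\varpi,n)$, i.e.\ some $\varpi\in A^c$ with $\forall x\,\varphi_0(x,\varpi)$. Since $A$ was arbitrary, this proves $\PP[\forall x^0\varphi_0(x)]\geq\lambda$.

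The main point of care is the $\oplus$ restriction. The derivation must not use $\CCOmega{0}$ inside the premise of any application of $\QFER$. The argument above uses only logical steps and instantiations, with no extensionality rule applied to conclusions depending on contra-collection, so this is fine. A second point is checking that the parameter $A$ and the real $\lambda$ are allowed in the instance. $\CCOmega{0}$ permits arbitrary quantifier-free formulas, with parameters, in variables of type $\Omega$, so this is unproblematic.
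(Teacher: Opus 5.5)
Your proof is correct and takes essentially the same route as the paper. The paper reduces (2) to (1) by duality and gets the forward direction from Proposition \ref{pro:initialPrenex}. For the converse, it uses anti-monotonicity to pass from $\forall x^0\exists\varpi\in A^c\,\varphi_0(x,\varpi)$ to $\forall x^0\exists\varpi\in A^c\,\forall y\leq_0 x\,\varphi_0(y,\varpi)$, then applies $\CCOmega{0}$ for each $A$ with $\PP(A)<_\RR\lambda$. Your explicit choice of matrix $\varpi\in A^c\wedge\varphi_0(n,\varpi)$ just makes precise how that instance fits the syntactic form of the principle.
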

\begin{proof}
As before, we only need to prove one of the items as the other follows using \eqref{def-P-lt}. Here, we only show (1). By Proposition \ref{pro:initialPrenex}, also similar to before, we are only left to show 
\[
\forall x^0(\PP[\varphi_0(x)]\ge \lambda)\rightarrow \PP[\forall x^0\varphi_0(x)]\ge \lambda.
\]
For that, note that we have
\begin{align*}
\forall x^0(\PP[\varphi_0(x)]\ge \lambda)& \;\equiv\; \forall x^0\forall A^\Algebra (\PP(A) <_\RR \lambda \to \exists \varpi^\Omega\in A^c\,\varphi_0(x,\varpi))\\
& \;\leftrightarrow\; \forall A^\Algebra (\PP(A) <_\RR \lambda \to \forall x^0\,\exists \varpi^\Omega\in A^c\,\varphi_0(x,\varpi))\\
& \;\leftrightarrow\; \forall A^\Algebra (\PP(A) <_\RR \lambda \to \forall x^0\,\exists \varpi^\Omega\in A^c\,\forall y\leq_0 x\,\varphi_0(y,\varpi)),
\end{align*}
where the last line follows from the anti-monotonicity of $\varphi_0$. Using $\CCOmega{0}$, we get 
\[
\forall x^0\exists \varpi^\Omega\in A^c\forall y\leq_0 x\,\varphi_0(y,\varpi)\to \exists \varpi^\Omega\in A^c\,\forall x^0\varphi_0(x,\varpi),
\]
so that $\forall x^0(\PP[\varphi_0(x)]\ge \lambda)$ implies 
\[
\forall A^\Algebra (\PP(A) <_\RR \lambda \to \exists \varpi^\Omega\in A^c\,\forall x^0\varphi_0(x,\varpi)),
\]
i.e. $\PP[\forall x^0\varphi_0(x)]\ge \lambda$, as claimed.
\end{proof}

While the restriction of the above result to quantifier-free formulas seems quite essential, the restriction to type $0$ is entirely dependent on the fact that we only treated $\CCOmega{0}$ and $\CCF{0}$. Indeed, it is easy to see that the above proof immediately generalizes whenever contra-collection of higher types is available. This would be an advantage of the bounded functional interpretation \cite{FerreiraOliva2005}, where an analogous contra-collection principle is immediately available for arbitrary types without arduous pre- and post-processing, at the expense of using an intensional majorizability relation of course.

\begin{remark}\label{rem:prenex}
Such prenexation laws as the above are formulated mathematically, for general (measurable) formulas, for example in Lemma 3.1 in \cite{NeriPowell2025}. Concretely, let $(\Omega,\Algebra,\PP)$ be a probability space and write $\forall n\,\varphi(n)=\bigcap_{n\in\mathbb{N}}\varphi(n)$ as well as $\exists n\,\varphi(n)=\bigcup_{n\in\mathbb{N}}\varphi(n)$ for any measurable $\varphi$, that is $\varphi(n)\in\Algebra$ for all $n\in\mathbb{N}$. Then, Lemma 3.1 in \cite{NeriPowell2025} proves semantically that 
\begin{enumerate}
\item $\PP(\forall n\, \varphi(n))\geq p$ if and only if $\forall n(\PP(\varphi(n))\geq p)$,
\item $\PP(\forall n\, \varphi(n))\leq p$ if and only if $\forall \lambda >0\exists n(\PP(\varphi(n))<p+\lambda)$,
\end{enumerate}
for any measurable anti-monotone $\varphi$ and probability $p\in [0,1]$, and 
\begin{enumerate}
\item $\PP(\exists n\, \varphi(n))\leq p$ if and only if $\forall n(\PP(\varphi(n))\leq p)$,
\item $\PP(\exists n\, \varphi(n))\geq p$ if and only if $\forall \lambda >0\exists n(\PP(\varphi(n))>p-\lambda)$,
\end{enumerate}
for any measurable and monotone $\varphi$ and probability $p\in [0,1]$. However, this result only holds when $\PP$ is a measure and is indeed established in \cite{NeriPowell2025} using limit theorems for $\PP$, so that these principles are generally not true for contents. Even further, the laws formulated above are, in their generality, not even true for the outer content associated with a probability content. Strikingly, the previous propositions hence show that variants of such prenexation laws are admissible in proofs for the purpose of extracting computable information. As our systems are based on contents, the resulting consequences will, further, be valid for general contents. So, uniform boundedness allows one to falsely treat contents, to some degree, like a measure in proofs, all the while retaining meaningful bound extraction theorems with valid conclusions over content spaces. In these contexts, we are in particular not restricted to the type $0$ in our quantifiers, and can even allow the use of uncountable quantifiers.
\end{remark}

\begin{remark}\label{rem:MarkovRemark}
Note that out of the semi-constructive results above, only Proposition \ref{pro:furtherPrenexConst} requires the use of $\IPU$. Further, none of the results use $\AC$ or $\MARK$ in any form. Also, none of the classical results use $\DC$.
\end{remark}

As a last result of the present section, we now provide a type of conservativity result for the systems considered here regarding the principle of $\sigma$-additivity by relating it, in a classical context, to the uniform boundedness principles. Here, $\sigma$-additivity concretely takes the form of
\[
\PP\left(\bigcup_{n\in\mathbb{N}} A_n \right) = \sum_{n\in\mathbb{N}} \PP (A_n),
\]
where $A_n$ are pairwise disjoint events. First note that, using the monotonicity of the series $\sum_{n=0}^l \PP(A_n)$, the above statement is equivalent to the conjunction of the two principles
\[
\forall l\in\mathbb{N} \left(\PP\left(\bigcup_{n\in\mathbb{N}}  A_n \right) \geq \sum_{n=0}^l \PP ( A_n)\right)\text{ and }\forall k\in\mathbb{N} \exists l\in\mathbb{N} \left(\PP\left(\bigcup_{n\in\mathbb{N}}  A_n \right) \leq \sum_{n=0}^l \PP ( A_n) + 2^{-k}\right).
\]
To formulate this property formally, we first recall from \cite{NeriPischke2025} the following operation on terms of type $S(0)$ that allows for the implicit quantification over a disjoint countable family of sets: Given $A^{S(0)}$, we set $(A\!\uparrow)_0:=A_0$ and
\[
(A\!\uparrow)_{n+1}:=A_{n+1}\cap \left(\bigcup^n_{i=0} A_i\right)^c.
\]
This operation thus turns $A$, representing an arbitrary sequence of measurable sets, into a sequence of disjoint sets $A\!\uparrow$ with the same (partial) union(s). Further, if $A$ was already a disjoint family, then it is left unchanged by the operation (see also the discussion in \cite{NeriPischke2025}).

The fact that $\varpi$ is included in the union $\bigcup_{n\in\mathbb{N}} A_n$ can now be expressed using the outer content via the formula $\exists n^0 (\varpi \in A_n)$. The first inequality above then corresponds to the statement
\[
\forall A^{S(0)} \forall l^0 \left(\PP\left[\exists n^0 A_n\right] \geq \sum_{n=0}^l \PP ((A\!\uparrow)_n)\right)
\]
while the second can be formally recognized via
\[
\forall A^{S(0)}\forall k^0 \exists l^0 \left( \PP\left[\exists n^0 A_n\right] \leq \sum_{n=0}^l\PP ((A\!\uparrow)_n) + 2^{-k}\right).
\]
We denote the conjunction of these two principles by ($\sigma$-\textsf{additivity}). The key result regarding this principle is then the following:

\begin{proposition}\label{Prop:elim:sigma} The system $\mathcal{F}^\omega[\PP] +\UBOmega{0}$ proves ($\sigma$-\textsf{additivity}).
\end{proposition}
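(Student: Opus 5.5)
We prove the two conjuncts of ($\sigma$-\textsf{additivity}) separately. Only the second one uses $\UBOmega{0}$.

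\textbf{First conjunct.} Fix $A^{S(0)}$ and $l^0$, and put $B:=\bigcup_{n=0}^l (A\!\uparrow)_n$. Provably $B=_\Algebra\bigcup_{n=0}^l A_n$, and by finite additivity on the disjoint sets $(A\!\uparrow)_n$ we have $\PP(B)=_\RR\sum_{n=0}^l\PP((A\!\uparrow)_n)$. Every $\varpi\in B$ satisfies $\exists n^0(\varpi\in A_n)$. By the Remark following Proposition \ref{pro:measurable}, applied with $A_0:=B$, the inequality $\PP(B)\geq\lambda$ gives $\PP[\exists n^0 A_n]\geq\lambda$. Taking $\lambda:=\PP(B)$ yields the first conjunct, constructively and without uniform boundedness.

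\textbf{Second conjunct.} Fix $A$ and $k$ and write $\varphi(n,\varpi):\equiv\exists m\leq_0 n\,(\varpi\in A_m)$. This formula is quantifier-free modulo bounded search, and it is monotone in $n$. For every $n$ the formula $\varphi(n)$ is measurable, with set $C_n:=\bigcup_{m\le n}A_m$. So the natural plan is to prenex $\PP[\exists n\,\varphi(n)]\leq\lambda$ by Proposition \ref{pro:furtherPrenex}(2). That item is stated as provable with $\UBOmega{\tau}$ for $\tau=0$, and it requires an $\exists$-formula monotone in $n$; both conditions hold here. It gives
\[
\PP[\exists n^0\varphi(n)]\leq\lambda\;\leftrightarrow\;\forall n^0(\PP[\varphi(n)]\leq\lambda).
\]
Set $s_l:=\sum_{n=0}^l\PP((A\!\uparrow)_n)=\PP(C_l)$. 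The sequence $(s_l)$ is monotone and bounded by $1$. Hence classically (arithmetical comprehension is available through $\DC$, or one can argue directly) there is an $l$ with $s_{n}\leq s_l+2^{-k}$ for all $n$. Otherwise one could pick infinitely many increments of size $>2^{-k}$, which is impossible with $2^k+1$ of them summing to at most $1$. For this $l$, Proposition \ref{pro:measurable} gives $\PP[\varphi(n)]\leq s_l+2^{-k}$ for all $n$. Note that $\exists n\,\varphi(n)$ is equivalent to $\exists n^0 A_n$. By the displayed equivalence, together with Proposition \ref{pro:implicationOuter} (or its dual form) to transfer along the equivalence, we obtain $\PP[\exists n^0A_n]\leq s_l+2^{-k}$.

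\textbf{Main obstacle.} The point needing care is checking that Proposition \ref{pro:furtherPrenex}(2) really applies. The first equivalence in item (2) is the one proved with $\UBOmega{\tau}$, via the dual of item (1) using \eqref{def-P-lt}. If the formal hypotheses, namely the $\exists$-formula form and monotonicity in the pure type $0$, fail syntactically because of the bounded quantifier, I would instead redo its proof directly. Unfold $\forall n(\PP[\varphi(n)]\leq\lambda)$ via Proposition \ref{pro:duality} to $\forall A'(\exists n\,\forall\varpi(\varpi\in A'\to\neg\varphi(n,\varpi))\to\PP(A')\leq\lambda)$, classically. Given $A'$ with $A'\cap\bigcup_n A_n=\emptyset$, apply $\UBOmega{0}$ to $\forall\varpi\,\exists n(\varpi\in A'\to\neg\varphi(n,\varpi))$. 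This yields a uniform bound $\chi$, and anti-monotonicity of $\neg\varphi$ then gives a single $n:=\chi$. Getting this exchange of the $\forall\varpi$ and $\exists n$ quantifiers right is the only non-routine step.
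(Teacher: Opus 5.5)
Your proposal is correct and follows the paper's proof essentially step for step. The first conjunct goes via Proposition~\ref{pro:measurable} and Proposition~\ref{pro:implicationOuter}, which is what the Remark you cite packages. The second goes via the Cauchy form of monotone convergence for the partial sums $\PP(\bigcup_{n\le l}A_n)$, then Proposition~\ref{pro:measurable} for the finite unions, then Proposition~\ref{pro:furtherPrenex}(2) with $\UBOmega{0}$ to pull out the existential, then Proposition~\ref{pro:implicationOuter} to pass to $\exists n^0 A_n$. The only differences are cosmetic: the paper uses $\exists n\leq_0(l+m)\,A_n$ where you use $\exists m\leq_0 n\,(\varpi\in A_m)$, and it cites the monotone convergence theorem rather than proving it.

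Your worry about the bounded quantifier is unfounded, since the paper applies Proposition~\ref{pro:furtherPrenex}(2) to exactly such a formula, so the fallback is never needed. As written, however, the fallback has the inclusion reversed. Classically, $\PP[\varphi(n)]\leq\lambda$ unfolds to $\forall A'(\forall\varpi(\varpi\in A'\to\varphi(n,\varpi))\to\PP(A')\leq_\RR\lambda)$. So you must take $A'\subseteq\bigcup_nA_n$ and use monotonicity of $\varphi$, not $A'$ disjoint from $\bigcup_nA_n$ together with $\neg\varphi$.
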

\begin{proof}
Abusing notation slightly, note that
\[
\sum_{n=0}^l \PP ((A\!\uparrow)_n)=_\RR\PP\left(\bigcup_{n=0}^lA_n\right)\leq \PP[\exists n\leq_0 l A_n]
\]
is provable in $\mathcal{F}^\omega[\PP]$ (recall Proposition \ref{pro:measurable}), so that the first part of ($\sigma$-\textsf{additivity}) is similarly provable in $\mathcal{F}^\omega[\PP]$, using Proposition \ref{pro:implicationOuter} and that $\exists n\leq_0 l (\varpi\in A_n)$ implies $\exists n^0 (\varpi\in A_n)$. Hence, we turn to the second part of ($\sigma$-\textsf{additivity}). Let $A^{S(0)}$ and $k^0$ be given. The partial sums $\sum_{n=0}^{l} \PP ((A\!\uparrow)_n)=_\RR\PP(\bigcup_{n=0}^{l} A_n )$ now form a monotone sequence of nonnegative real numbers, bounded above by $1$. Since $\mathcal{F}^\omega[\PP]$ (already without $\DC$) proves the Cauchy formulation of the monotone convergence theorem (see e.g.\ Proposition 4.5 in \cite{NeriPischke2025}), there exists an $l^0$ such that 
\[
\forall m^0\left( \PP\left(\bigcup_{n=0}^{l+ m} A_n \right) \leq_\RR \sum_{n=0}^{l} \PP ((A\!\uparrow)_n) + 2^{-k}\right).
\]
Using Proposition \ref{pro:measurable}, we in particular get 
\[
\forall m^0\left( \PP[\exists n \le_0 (l+m)\, A_n] \leq \sum_{n=0}^{l} \PP ((A\!\uparrow)_n) + 2^{-k}\right).
\]
Now part (2) of Proposition \ref{pro:furtherPrenex} yields (observing the required monotonicity property in $m$), that with $\UBOmega{0}$ we have 
\[
\PP[\exists m^0\exists n \le_0 (l+m)\, A_n] \leq \sum_{n=0}^{l} \PP ((A\!\uparrow)_n)) + 2^{-k}
\]
which, combined with Proposition \ref{pro:implicationOuter}, since $\exists m^0\exists n\leq_0 (l+m) \, (\varpi\in A_n)$ is equivalent to $\exists n^0 (\varpi\in A_n)$, yields 
\[
\PP[\exists n^0 A_n] \leq \sum_{n=0}^{l} \PP ((A\!\uparrow)_n) + 2^{-k},
\]
as required.
\end{proof}

Proposition \ref{Prop:elim:sigma} in particular shows that any abstract use of $\sigma$-additivity, that is $\sigma$-additivity formulated for measurable sets via the abstract type $\Algebra$, is eliminated via a proof mining analysis at the expense of a simple functional iteration akin to the monotone convergence theorem. Naturally, for more complex measurable sets which are not immediately represented via terms of type $\Algebra$ (i.e.\ via quantifier-free formulas), the resulting principle of $\sigma$-additivity might of course still carry a lot of computational strength, akin to the limit theorems utilized previously to interchange quantifiers with the (outer) content.

Nevertheless, this crucial observation that an \emph{abstract} use as above remains computationally tame, finally provides a formal explanation and justification for the (up to this point only empirically observed) phenomenon that proof mining seems to eliminate such uses of this axiom in the course of an analysis. As such, it in particular formally illustrates that probability contents indeed seem to be the finitary core of probability theory through the lens of proof mining, as conjectured in \cite{NeriPischke2025} based on the case studies available at the time. In particular, in light of Theorem \ref{thm:metaClassical}, we have that probabilistic theorems provably equivalent over $\mathcal{F}^\omega[\PP]+(\sigma\text{-}\textsf{additivity})$ to $\forall\exists$-statements remain true over finitely additive spaces.

\section{Logical metatheorems for probabilistic theorems}\label{sec:meta}

In this section, we illustrate how our formal translation based on the outer measure can be used to systematically associate computational content to probabilistic statements, and how this association can be used in combination with the logical machinery underlying the logical metatheorems for proof mining in probability theory due to the first and third author to extract that content from proofs.

Our discussion will primarily be focused on probabilistic $\forall\exists$-theorems, that is theorems of the form
\[
\text{``$\forall x^\tau\exists n^0\varphi(x,n)$ with probability $\geq\lambda$''},\tag{$*$}\label{originalProblem}
\]
where, at first, we want to think of $\tau$ as a generic admissible type and $\varphi$ as an arbitrary formula, although we later also discuss various special cases.

There are three different, but a priori equally natural, ways to express this property formally in our system, and we now want to motivate them in what follows. The most naive way is to simply wrap the $\forall\exists$-statement in the outer content introduced earlier, leading to
\[
\PP[\forall x^\tau\exists n^0\varphi(x,n)]\geq\lambda.\tag*{$(*)_1$}\label{strongProblem}
\]
From a purely mathematical perspective, at least over measure spaces when $\tau=0$, and if the statement $\varphi(x,n)$ is measurable and anti-monotone in $x$, the leading universal quantifier of that statement can be prenexed out of the probability (recall Remark \ref{rem:prenex}), leading to a statement of the form 
\[
\forall x^\tau \left( \PP[\exists n^0\varphi(x,n)]\geq\lambda\right). \tag*{$(*)_2$}\label{problem}
\]
If, moreover, $\varphi(x,n)$ is assumed to be monotone in $n$, then this statement can be further prenexed at the expense of a probabilistic error, leading to a statement of the form
\[
\forall m^0\forall x^\tau\exists n^0 \left( \PP[\varphi(x,n)]\geq\lambda-2^{-m} \right).\tag*{$(*)_3$}\label{uniformProblem}
\]
While thereby naturally related in the mathematical context of measures, for suitably monotone and measurable properties and sufficiently low types, these statements all provide, in one way or another, suitable formal representations of the previously mentioned theorems of the type \eqref{originalProblem}. 

Presented with three choices for a formal access towards probabilistic $\forall\exists$-theorems, we are now faced with three immediate questions:
\begin{enumerate}
\item What is the computational content of theorems of the form \ref{strongProblem} -- \ref{uniformProblem} and when can it be extracted from corresponding (classical or constructive) proofs?
\item How are the statements \ref{strongProblem} -- \ref{uniformProblem} and their computational interpretations related, potentially irrespective of monotonicity assumptions as used above?
\item Which formulations naturally occur in practical case studies from probability theory and statistics?
\end{enumerate}

The next three parts of the present section provide a comprehensive answer to the first question by presenting corresponding metatheorems on the extraction of bounds tailored to theorems of the form \ref{strongProblem} -- \ref{uniformProblem}. In that way, we extend the existing arsenal of proof mining metatheorems with dedicated tools to handle all the different kinds of probabilistic theorems illustrated above. As in the work \cite{NeriPischke2025}, and as already highlighted before, the abstract treatment of the sample and event spaces, and in particular their uniform majorizability, guarantees that the resulting bounds will be highly uniform, not depending on any parameters related to the underlying probability content space, such as its measure. In the classical case, $\varphi$ will be restricted to be purely existential, while there are no such formula restrictions in the constructive case. In general, the type $\tau$ of $x$ will be allowed to exceed $0$, as long as it remains reasonably low (as is typical for proof mining metatheorems, recall Section \ref{sec:prelim}). Throughout, we only consider the case of the outer content and high probability, as the case of the inner content and low probability can be treated using duality. Strikingly, the statements \ref{strongProblem} -- \ref{uniformProblem} exhibit very different quantitative behavior, illustrating how the transformations (based on the continuity of the measure) used above to motivate the various formulations carry heavy computational strength in general. As such, the corresponding metatheorems get progressively more difficult to establish, requiring more and more theory beyond \cite{NeriPischke2025}. In particular the second question on the relation between \ref{strongProblem} -- \ref{uniformProblem} in that way, turns out to be rather subtle and complex, so that the last part of the present section is dedicated solely to this discussion.

The third question will be addressed in the following and final Section \ref{sec:examples}, where various exemplary applications from mathematical practice are discussed, and where in particular these formulations and their computational content will be linked with the by now quite considerable range of quantitative theorems from probability theory and statistics derived via the proof mining program, as already briefly mentioned in the introduction.

\subsection{Probabilistic theorems of the form $\forall x^\tau \left( \PP[\exists n^0\varphi(x,n)]\geq\lambda\right)$}

We begin with probabilistic bound extraction metatheorems for statements of the form \ref{problem}. In both the classical and semi-constructive context, the results are rather immediate applications of the resulting classical and semi-constructive metatheorems presented in Section \ref{sec:prelim}.

Let us consider first the classical setting, where $\varphi(x, n)$ is assumed to be an existential formula $\varphi_\exists(x, n)$. 

\begin{theorem}\label{thrm:pulloutpi2}
In the context of the assumptions of Theorem \ref{thm:metaClassical}, suppose that
\[
\mathcal{F}^{\omega}[\PP]+\UBOmega{\omega}+\UBF{\omega}\vdash \forall x^\tau \left( \PP[\exists n^0 \varphi_\exists(x,n)]\geq\lambda \right),
\] 
where $\tau$ is admissible and $\varphi_\exists(x,n,\varpi)$ is an existential formula such that all internal quantifiers have admissible types. Then, from this proof one can extract a partial functional $\Phi:\mathbb{N}\times\mathcal{S}_{\widehat{\tau}}\rightharpoonup\mathbb{N}$ which is total and bar-recursively computable on $\mathbb{N}\times\mathcal{M}_{\widehat{\tau}}$ and such that
\[
\mathcal{S}^{\omega, \Omega, \Algebra} \models \forall m^0\forall {x^*}^{\widehat{\tau}} \forall x\lesssim_\tau x^*\left(\PP[\exists n\leq_0\Phi(m,x^*)\, \varphi_\exists(x,n)]\geq \lambda-2^{-m}\right)
\]
holds for $\mathcal{S}^{\omega, \Omega, \Algebra}$ defined via any non-empty set $\Omega$, algebra $S\subseteq 2^\Omega$, and probability content $\PP$ on $\Algebra$ (and with suitable interpretations of the additional constants).

The same remarks (1) -- (5) as in Theorem \ref{thm:metaClassical} apply.
\end{theorem}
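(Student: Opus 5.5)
Unfold the outer content in the hypothesis. By definition (P), the provable statement reads
\[
\forall x^\tau\,\forall A^\Algebra\left(\PP(A)<_\RR\lambda\to\exists\varpi^\Omega\in A^c\,\exists n^0\,\varphi_\exists(x,n,\varpi)\right).
\]
To reach a $\forall\exists$-form with a numerical existential witness, replace the premise $\PP(A)<_\RR\lambda$ by the stronger, quantifier-free-in-the-limit condition $\PP(A)\leq_\RR\lambda-2^{-m}$. This is provable since $\lambda-2^{-m}<\lambda$. It yields
\[
\forall m^0\,\forall x^\tau\,\forall A^\Algebra\,\exists n^0\left(\PP(A)\leq_\RR\lambda-2^{-m}\to\exists\varpi^\Omega\in A^c\,\varphi_\exists(x,n,\varpi)\right).
\]
Pulling $\exists n$ out of the conclusion is classically (and even intuitionistically, given that the premise is $\Pi^0_1$) valid. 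Since $\PP(A)\leq_\RR\lambda-2^{-m}$ is a $\forall$-formula in negative position, the matrix is, after prenexing, of the form $\exists k^0(\ldots)\lor\exists\varpi\,\exists\underline{w}(\ldots)$. That is an existential formula, and all its quantifiers have admissible types ($\Omega$, $0$, and those of $\varphi_\exists$). Absorbing $k$ into the tuple of witnesses via remark (2) of Theorem~\ref{thm:metaClassical}, the statement fits the form $\forall x'\exists n\,\psi_\exists(x',n)$ with $x'=(m,x,A)$, where $A$ has type $\Algebra$, which is admissible.

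Now apply Theorem~\ref{thm:metaClassical}. It gives a bar-recursively computable $\Phi'(m^*,x^*,A^*)$ bounding $n$ (and the auxiliary $k$) for all $m\lesssim m^*$, $x\lesssim x^*$ and $A\lesssim_\Algebra A^*$. The key uniformity point is that every $A\in\Algebra$ is majorized by the numeral $1$, because $\PP(A)\leq 1$; likewise $m^*=m$. Setting $\Phi(m,x^*):=\Phi'(m,x^*,1)$ removes the dependence on $A$ entirely. By monotonicity of existential bounds, the bounded statement then holds in $\mathcal{S}^{\omega,\Omega,\Algebra}$ with $\exists n\leq\Phi(m,x^*)$.

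Finally, re-fold the semantic conclusion. We have, for all $A$, that $\PP(A)\leq\lambda-2^{-m}$ implies $\exists\varpi\in A^c\,\exists n\leq\Phi(m,x^*)\,\varphi_\exists$. We must show this gives $\PP[\exists n\leq\Phi(m,x^*)\,\varphi_\exists]\geq\lambda-2^{-m}$, i.e.\ the same implication under the premise $\PP(A)<\lambda-2^{-m}$. This is immediate, since strict inequality implies the non-strict one. This step is therefore where the $2^{-m}$ slack is really used: it converts the existential premise $<_\RR$ into the universal one $\leq_\RR$, so that the formula stays existential.

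The main obstacle is the bookkeeping that the transformed matrix is genuinely existential with admissible types, so that Theorem~\ref{thm:metaClassical} applies, together with the check that the majorant $1$ for $\Algebra$ is legitimate in $\mathcal{M}^{\omega,\Omega,\Algebra}$. The latter holds because $1\gtrsim_\Algebra A$ iff $1\geq\PP(A)$. The remarks (1)--(5) carry over verbatim, since we only applied Theorem~\ref{thm:metaClassical} to a provably equivalent reformulation.
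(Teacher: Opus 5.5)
Your proposal is correct and is essentially the paper's proof: unfold \eqref{def-P-gt}, strengthen the premise to obtain $\forall x^\tau\forall m^0\forall A^\Algebra\exists n^0(\PP(A)\leq_\RR\lambda-2^{-m}\to\exists\varpi\in A^c\,\varphi_\exists(x,n,\varpi))$, apply Theorem~\ref{thm:metaClassical}, and refold using that $<_\RR$ implies $\leq_\RR$. You also make explicit the uniform majorant $1\gtrsim_\Algebra A$, which the paper uses silently to drop the bound's dependence on $A$; the only inaccuracy is your parenthetical that pulling $\exists n$ past the $\Pi^0_1$ premise is valid intuitionistically, since that needs $\IPU$, though this plays no role in the classical setting.
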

\begin{proof} Expanding the abbreviation, we have that $\forall x^\tau (\PP[\exists n^0 \varphi_\exists(x,n)]\geq\lambda)$ is equivalent to 
\[
\forall x^\tau\forall m^0\forall A^\Algebra\exists n^0(\PP(A)\leq_\RR \lambda -2^{-m}\to \exists \varpi\in A^c \, \varphi_\exists(x,n,\varpi)).
\]
As the inner formula is existential with suitable type restrictions, we can then apply Theorem \ref{thm:metaClassical} to extract a suitable partial functional $\Phi:\mathbb{N}\times\mathcal{S}_{\widehat{\tau}}\rightharpoonup\mathbb{N}$ such that
\[
\mathcal{S}^{\omega, \Omega, \Algebra} \models \forall m^0\forall {x^*}^{\widehat{\tau}} \forall x\lesssim_\tau x^* \forall A^\Algebra\exists n\leq_0\Phi(m,x^*) (\PP(A)\leq_\RR \lambda -2^{-m}\to \exists \varpi\in A^c\,\varphi_\exists(x,n,\varpi))
\] 
for all suitable models $\mathcal{S}^{\omega, \Omega, \Algebra}$. This in particular implies
\[
\mathcal{S}^{\omega, \Omega, \Algebra} \models \forall m^0\forall {x^*}^{\widehat{\tau}} \forall x\lesssim_\tau x^* \left(\PP[\exists n\leq_0\Phi(m,x^*)\, \varphi_\exists(x,n)]\geq \lambda-2^{-m}\right)
\]
as claimed.
\end{proof}

Note that Theorem \ref{thrm:pulloutpi2} also applies to statements of the form \ref{strongProblem}, that is of the form $\PP[\forall x^\tau\exists n^0 \varphi_\exists(x,n)]\geq\lambda$, since, by Proposition \ref{pro:initialPrenex}, this implies $\forall x^\tau \left( \PP[\exists n^0 \varphi_\exists(x,n)]\geq\lambda \right)$ already over $\mathcal{F}^\omega[\PP]$.

Let us now consider the case when statements of the form \ref{problem} have been proved in a semi-constructive setting. As in Theorem \ref{thm:metaConstructive}, the semi-constructive context allows for greater flexibility in the assumptions.

\begin{theorem}\label{thrm:pulloutpi2-const}
In the context of the assumptions of Theorem \ref{thm:metaConstructive}, suppose that
\[
\mathcal{F}_i^{\omega}[\PP]+\FullUBOmega{\omega}+ \FullUBF{\omega} \vdash \forall x^\tau \left( \PP[\exists n^0 \varphi(x,n)]\geq\lambda \right),
\] 
where $\tau$ is admissible and $\varphi(x,n,\varpi)$ is any formula such that all positively occurring universal quantifiers and all negatively occurring existential quantifiers have small types and all other types are admissible.

Then one can extract a primitive-recursive functional $\Phi:\mathbb{N}\times\mathcal{S}_{\widehat{\tau}}\to\mathbb{N}$ such that 
\[
\mathcal{S}^{\omega, \Omega, \Algebra}\models \forall m^0\forall {x^*}^{\widehat{\tau}}\forall x\lesssim_\tau x^*\left(\PP[\exists n\leq_0\Phi(m,x^*)\, \varphi(x,n)]\geq \lambda-2^{-m}\right)
\]
holds for $\mathcal{S}^{\omega, \Omega, \Algebra}$ defined via any non-empty set $\Omega$, algebra $S\subseteq 2^\Omega$, and probability content $\PP$ on $\Algebra$ (and with suitable interpretations of the additional constants).

If the theorem is established over $\mathcal{F}_i^{\omega}[\PP] + \FullUBOmega{\omega} + \FullUBF{\omega} \oplus\CCOmega{0} \oplus\CCF{0}$ instead, then the result remains true if $\varphi(x,n,\varpi)$ is a quantifier-free formula.

We have the same remarks (1) -- (3) as in Theorem \ref{thm:metaConstructive}.
\end{theorem}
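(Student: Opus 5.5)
The plan is to mirror the proof of Theorem \ref{thrm:pulloutpi2}, replacing Theorem \ref{thm:metaClassical} by Theorem \ref{thm:metaConstructive} (or Theorem \ref{thm:metaConstructiveCC} for the contra-collection variant). The key step is to rewrite the proved statement inside $\mathcal{F}^\omega_i[\PP]+\FullUBOmega{\omega}+\FullUBF{\omega}$ into a $\forall\exists$-form with $n$ pulled in front of the matrix. Unfolding (\ref{def-P-gt}), the hypothesis reads
\[
\forall x^\tau\forall A^\Algebra\left(\PP(A)<_\RR\lambda\to\exists\varpi^\Omega\in A^c\,\exists n^0\,\varphi(x,n,\varpi)\right).
\]
Constructively, $\PP(A)\leq_\RR\lambda-2^{-m}$ implies $\PP(A)<_\RR\lambda$. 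Swapping $\exists\varpi\,\exists n$ to $\exists n\,\exists\varpi$, we therefore obtain
\[
\forall x^\tau\forall m^0\forall A^\Algebra\left(\PP(A)\leq_\RR\lambda-2^{-m}\to\exists n^0\,\exists\varpi^\Omega\in A^c\,\varphi(x,n,\varpi)\right).
\]
The premise $\PP(A)\leq_\RR\lambda-2^{-m}$ is a universal ($\Pi^0_1$) formula, so $\IPU$, which is part of $\mathcal{F}^\omega_i[\PP]$, moves $\exists n$ in front of the implication. This yields
\[
\forall x^\tau\forall m^0\forall A^\Algebra\exists n^0\left(\PP(A)\leq_\RR\lambda-2^{-m}\to\exists\varpi^\Omega\in A^c\,\varphi(x,n,\varpi)\right).
\]

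Next I check the type conditions of Theorem \ref{thm:metaConstructive}. The variables in the universal prefix are $x$, $m$ and $A$. The type $\Algebra$ is admissible, and $A$ is majorized by the constant $1$ since $1\gtrsim_\Algebra A$. The matrix has a new positively occurring existential quantifier over $\Omega$, which is admissible. The premise contributes a negatively occurring universal number quantifier hidden in $\leq_\RR$, and hence a positively occurring existential in the negative position, which is harmless. The remaining quantifiers are those of $\varphi$, occurring with the same polarity as in the hypothesis, so they satisfy the assumed restrictions. Applying the metatheorem to the tuple $(m,x,A)$ gives a primitive recursive $\Psi(m,x^*,1)$ bounding $n$. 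I then set $\Phi(m,x^*):=\Psi(m,x^*,1)$, which is uniform in $A$.

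To conclude, fix $A$ with $\PP(A)<_\RR\lambda-2^{-m}$, so in particular $\PP(A)\leq_\RR\lambda-2^{-m}$. The metatheorem provides $n\leq\Phi(m,x^*)$ and $\varpi\in A^c$ with $\varphi(x,n,\varpi)$, hence $\exists\varpi\in A^c\,\exists n\leq\Phi(m,x^*)\,\varphi$. This is exactly $\PP[\exists n\leq_0\Phi(m,x^*)\,\varphi(x,n)]\geq\lambda-2^{-m}$ in $\mathcal{S}^{\omega,\Omega,\Algebra}$. Remarks (1)–(3) transfer directly from Theorem \ref{thm:metaConstructive}. For remark (2), $\IPEF$ covers $\IPU$ once $\leq_\RR$ is read as existential-free, so the modified realizability route still applies.

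For the $\oplus\,\CCOmega{0}\oplus\CCF{0}$ variant I would apply Theorem \ref{thm:metaConstructiveCC} instead. That theorem requires an existential matrix, which is why $\varphi$ must be quantifier-free. Then $\PP(A)\leq_\RR\lambda-2^{-m}\to\exists\varpi\in A^c\,\varphi_0$ has to be made existential. I would handle the universal premise by pulling out its hidden $\forall k$ as an existential (Markov-style): either prenex it using $\MARK$, or replace it by the rational approximation $\PP(A)(j)\leq\lambda(j)-2^{-m}+2^{-j}$, a quantifier-free decidable condition, weakening $m$ to $m+1$ where needed.

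I expect this step to be the main obstacle: keeping the matrix existential while preserving the exact form of the conclusion. The first thing I would try is the quantifier-free rational-approximation premise at level $m+2$, with $\Phi(m+2,\cdot)$ absorbing the slack.
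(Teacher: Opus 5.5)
Your proposal is correct and matches the paper's proof: the paper likewise rewrites the hypothesis (via $\IPU$) as $\forall x^\tau\forall m^0\forall A^\Algebra\exists n^0(\PP(A)\leq_\RR\lambda-2^{-m}\to\exists\varpi\in A^c\,\varphi(x,n,\varpi))$, then applies Theorem \ref{thm:metaConstructive}, and for the contra-collection variant Theorem \ref{thm:metaConstructiveCC}. The step you flag as the main obstacle is harmless. The paper passes over it silently, and your first option already settles it: $\MARK$ together with $\IPU$, both available in $\mathcal{F}^\omega_i[\PP]$, turns the $\Pi^0_1$-premise into an existential matrix, so the rational-approximation detour is not needed.
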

\begin{proof} As in the proof of Theorem \ref{thrm:pulloutpi2}, note that, in $\mathcal{F}_i^{\omega}[\PP]+\FullUBOmega{\omega} + \FullUBF{\omega}$, the statement $\forall x^\tau \left(\PP[\exists n^0 \varphi(x,n)]\geq\lambda\right)$ is equivalent to
\[
\forall x^\tau\forall m^0\forall A^\Algebra\exists n^0(\PP(A)\leq_\RR \lambda -2^{-m}\to \exists \varpi\in A^c \, \varphi(x,n,\varpi)).
\]
The first result now follows from Theorem \ref{thm:metaConstructive} and the second result, regarding contra-collection, follows from Theorem \ref{thm:metaConstructiveCC}.
\end{proof}

As before, Theorem \ref{thrm:pulloutpi2-const} also applies to statements of the form \ref{strongProblem}, i.e. $\PP[\forall x^\tau\exists n^0 \varphi(x,n)] \geq \lambda$, since, by Proposition \ref{pro:initialPrenex}, this provably implies $\forall x^\tau \left(\PP[\exists n^0 \varphi(x,n)]\geq\lambda\right)$ even in a constructive setting.

\subsection{Probabilistic theorems of the form $\forall m^0\forall x^\tau\exists n^0 \left( \PP[\varphi(x,n)]\geq\lambda-2^{-m} \right)$}

We now move to statements of the form \ref{uniformProblem}, which, in general (without any monotonicity assumptions), are stronger than theorems of the form \ref{problem} (recall Proposition \ref{pro:initialPrenex}). Nevertheless, they obey similarly nice bound extraction theorems, which are also readily established. However, in the classical case we do rely on epsilon-terms in the style of \cite{GuenzelKohlenbach2016}, and here provide a bit more detail than before, were they were only mentioned in passing. In that classical context, we then in particular assume that $\varphi$ is quantifier-free, although this requirement can be somewhat relaxed.

\begin{theorem}\label{thrm:pulloutpi2Uniform}
In the context of the assumptions of Theorem \ref{thm:metaClassical}, suppose that
\[
\mathcal{F}^{\omega}[\PP]+\UBOmega{\omega}+\UBF{\omega}\vdash \forall m^0 \forall x^\tau \exists n^0 \left( \PP[\varphi_0(x,n)] \geq \lambda - 2^{-m} \right),
\] 
where $\tau$ is admissible and $\varphi_0(x,n,\varpi)$ is quantifier-free. 

Then one can extract a partial functional $\Phi:\mathbb{N}\times\mathcal{S}_{\widehat{\tau}}\rightharpoonup\mathbb{N}$ which is total and bar-recursively computable on $\mathbb{N}\times\mathcal{M}_{\widehat{\tau}}$ and such that
\[
\mathcal{S}^{\omega, \Omega, \Algebra} \models \forall m^0\forall {x^*}^{\widehat{\tau}} \forall x\lesssim_\tau x^* \exists n\leq_0\Phi(m,x^*) \left(\PP[\varphi_0(x,n)]\geq \lambda-2^{-m}\right),
\]
holds for $\mathcal{S}^{\omega, \Omega, \Algebra}$ defined via any non-empty set $\Omega$, algebra $S\subseteq 2^\Omega$, and probability content $\PP$ on $\Algebra$ (and with suitable interpretations of the additional constants).

The same remarks (1) -- (5) as in Theorem \ref{thm:metaClassical} apply.
\end{theorem}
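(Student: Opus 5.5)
The plan is to reduce the statement to a genuine $\forall\exists$-statement with an existential matrix by a Herbrand-style normalization over sequences of events. We then apply Theorem \ref{thm:metaClassical} and recover the probabilistic conclusion semantically by a contradiction argument in $\mathcal{S}^{\omega,\Omega,\Algebra}$. This avoids any need for the epsilon-terms mentioned before the statement, which I would only fall back on if the reduction below fails. The difficulty is that $\PP[\varphi_0(x,n)]\geq\lambda-2^{-m}$ unfolds to $\forall A^\Algebra(\PP(A)<_\RR\lambda-2^{-m}\to\exists\varpi\in A^c\,\varphi_0(x,n,\varpi))$. Placed under $\exists n$, this makes the whole statement of shape $\forall\exists\forall\exists$, which is not directly covered by Theorem \ref{thm:metaClassical}.

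\emph{Step 1 (internal weakening).} Working in $\mathcal{F}^{\omega}[\PP]+\UBOmega{\omega}+\UBF{\omega}$, instantiate the hypothesis at $m+1$. Since $\lambda-2^{-m}<_\RR\lambda-2^{-m-1}$, every $A$ with $\PP(A)\leq_\RR\lambda-2^{-m}$ satisfies the premise of $\PP[\varphi_0(x,n)]\geq\lambda-2^{-m-1}$. Hence the system proves
\[
\forall m^0\forall x^\tau\forall A^{\Algebra(0)}\exists n^0\left(\PP(A_n)\leq_\RR\lambda-2^{-m}\to\exists\varpi\in A_n^c\,\varphi_0(x,n,\varpi)\right).
\]
Indeed, given $m,x$, take the $n$ supplied by the hypothesis at $m+1$ and instantiate its universal event quantifier at $A_n$. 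Because $\leq_\RR$ is $\Pi^0_1$, the premise $\PP(A_n)\leq_\RR\lambda-2^{-m}$ contributes only existential quantifiers after prenexing. The matrix is therefore existential, and its internal quantifiers (over $\Omega$ and $\NN$) have admissible types. Moreover, $\Algebra(0)$ is admissible, with majorant the constant-$1$ sequence independent of $A$, and $\tau$ is admissible by assumption.

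\emph{Step 2 (extraction).} Apply Theorem \ref{thm:metaClassical}, with $(m,x,A)$ as a tuple of universal variables, to obtain a partial functional $\Phi(m,x^*)$ with the required computability properties. Since all sequences $A$ are majorized by the same object, $\Phi$ can be taken independent of the majorant of $A$. Every model $\mathcal{S}^{\omega,\Omega,\Algebra}$ then satisfies: for all $m$, all $x\lesssim_\tau x^*$ and all $A\in\Algebra^{\NN}$, there is $n\leq\Phi(m,x^*)$ with $\PP(A_n)\leq\lambda-2^{-m}\to\exists\varpi\in A_n^c\,\varphi_0(x,n,\varpi)$.

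\emph{Step 3 (semantic recovery).} Fix $m$, $x$ and $x^*$, and suppose that no $n\leq\Phi(m,x^*)$ satisfies $\PP[\varphi_0(x,n)]\geq\lambda-2^{-m}$. Using choice in the metatheory, for each such $n$ pick $A_n\in\Algebra$ with $\PP(A_n)<\lambda-2^{-m}$ and no $\varpi\in A_n^c$ satisfying $\varphi_0(x,n,\varpi)$. Extend the $A_n$ arbitrarily, e.g.\ by $\emptyset$, to a full sequence. Each such $A_n$ satisfies the premise $\PP(A_n)\leq\lambda-2^{-m}$ but fails the conclusion, contradicting Step 2. Hence some $n\leq\Phi(m,x^*)$ has $\PP[\varphi_0(x,n)]\geq\lambda-2^{-m}$. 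The remarks (1)--(5) carry over verbatim from Theorem \ref{thm:metaClassical}.

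The main obstacle I expect is the bookkeeping in Step 1: choosing the strict/non-strict comparisons and the shift $m\mapsto m+1$ so that the matrix is purely existential while Step 3 still closes. The strict $<_\RR$ in the definition is $\Sigma^0_1$ in a premise, and must be traded for a $\Pi^0_1$ premise $\leq_\RR$ at a slightly smaller threshold. One should also double-check that quantifier-freeness of $\varphi_0$ is what keeps $\exists\varpi\in A_n^c\,\varphi_0$ existential; this is exactly where the classical restriction on the formula enters.
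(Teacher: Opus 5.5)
Your argument is correct, but it removes the inner event quantifier by a different device than the paper.

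\emph{The paper's route.} It keeps the inputs $(m,x)$ and adds an epsilon-constant $\chi_{\lambda,m,x,n}$ of type $\Algebra$. This constant is governed by the $\Delta$-axiom $(\chi)$ and names a counterexample event whenever one exists. The paper then applies Theorem \ref{thm:metaClassical} in the extended system and uses $(\chi)$ in the model to return to $\PP[\varphi_0(x,n)]\geq\lambda-2^{-m}$.

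\emph{Your route.} You Herbrandize instead. The counterexample sequence becomes an extra universal input $A^{\Algebra(0)}$. The uniform majorant $\lambda k.1$ of the small (hence admissible) type $\Algebra(0)$ makes the bound independent of $A$. The actual choice of counterexamples is made only afterwards, by choice in $\mathcal{S}^{\omega,\Omega,\Algebra}$. In effect your $A$ plays the role of $\lambda n.\chi_{\lambda,m,x,n}$, but it is picked after extraction rather than named inside the theory.

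\emph{What each buys.} Your version needs no language extension, and no check that a new constant is majorizable and validates its axiom in $\mathcal{M}^{\omega,\Omega,\Algebra}$. It also makes the $<_\RR$ versus $\leq_\RR$ bookkeeping explicit through the shift $m\mapsto m+1$, which the paper leaves implicit in its initial reformulation. Since $\widehat{\Algebra(0)}=1$ has degree $1$, enlarging the input tuple does not affect remarks (1)--(5). The paper's epsilon-term device, by contrast, is the uniform tool it also needs in Theorem \ref{thrm:pulloutpi2Strong}. There the quantifier to be removed sits under $\exists\varpi\in A^c$, and a Herbrand variable for a leading quantifier would no longer suffice.
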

\begin{proof}
The abbreviation $\forall m^0 \forall x^\tau \exists n^0 \left( \PP[\varphi_0(x,n)] \geq \lambda - 2^{-m} \right)$ is equivalent to 
\[
\forall x^\tau\forall m^0\exists n^0\forall A^\Algebra(\PP(A)\leq_\RR \lambda -2^{-m}\to \exists \varpi\in A^c \, \varphi_0(x,n,\varpi)).
\]
Using additional epsilon-terms (see e.g.\ \cite{GuenzelKohlenbach2016}), we can ``remove'' the bounded quantifier of type $\Algebra$ and regard the formula
\[
\forall A^\Algebra(\PP(A)\leq_\RR \lambda -2^{-m}\to \exists \varpi\in A^c \, \varphi_0(x,n,\varpi))
\]
as purely existential, with respective type restrictions. Concretely, we add a constant $\chi^{S(0)(\tau)(0)(1)}$ to the language that is governed by the following axiom of type $\Delta$:
\[
\begin{cases}
\forall\lambda^1,m^0,x^\tau,n^0\Big((\PP(\chi_{\lambda,m,x,n})\leq_\mathbb{R}\lambda-2^{-m}\rightarrow \exists \varpi\in (\chi_{\lambda,m,x,n})^c\, \varphi_0(x,n,\varpi) )\\
\qquad\rightarrow \forall A^\Algebra(\PP(A)<_\mathbb{R}\lambda-2^{-m}\rightarrow \exists \varpi\in A^c\, \varphi_0(x,n,\varpi))\Big).
\end{cases}\tag{$\chi$}\label{firstepsilonAx}
\]
The intended semantics of that constant is that $\chi_{\lambda,m,x,n}:=A$ for $A$ such that $\PP(A)\leq \lambda-2^{-m}$ and $\forall \varpi\in A^c\, \neg\varphi_0(x,n,\varpi)$, if existent, and $\chi_{\lambda,m,x,n}:=\emptyset$ otherwise. In that way, the constant immediately satisfies the above axiom (for suitably large $m$), and as $\chi$ maps into the type $\Algebra$, it is immediately majorizable. In the theory extended with the constant $\chi$ and the axiom \eqref{firstepsilonAx}, we can then in particular derive
\[
\forall x^\tau\forall m^0\exists n^0(\PP(\chi_{\lambda,m,x,n})\leq_\RR \lambda -2^{-m}\to \exists \varpi\in (\chi_{\lambda,m,x,n})^c \, \varphi_0(x,n,\varpi)).
\]
Applying Theorem \ref{thm:metaClassical} to that statement in this extension, we can extract a suitable partial functional $\Phi:\mathbb{N}\times\mathcal{S}_{\widehat{\tau}}\rightharpoonup\mathbb{N}$ such that, using \eqref{firstepsilonAx}, we have
\[
\mathcal{S}^{\omega, \Omega, \Algebra} \models \forall m^0\forall {x^*}^{\widehat{\tau}} \forall x\lesssim_\tau x^*\exists n\leq_0\Phi(m,x^*) \forall A^\Algebra (\PP(A)<_\RR \lambda -2^{-m}\to \exists \varpi\in A^c\,\varphi_0(x,n,\varpi))
\] 
for all suitable models $\mathcal{S}^{\omega, \Omega, \Algebra}$, which yields the claim.
\end{proof}

\begin{remark}\label{rem:monMeta}
In the context of Theorem \ref{thrm:pulloutpi2}, if $\varphi_\exists(x, n)$ is monotone in $n$ then (by Proposition \ref{pro:implicationOuter}) the functional $\Phi$ would in fact be a witness for $n$, i.e.
\[
\mathcal{S}^{\omega, \Omega, \Algebra} \models \forall m^0\forall {x^*}^{\widehat{\tau}} \forall x\lesssim_\tau x^* \left( \PP[\varphi_\exists(x,\Phi(m,x^*))]\geq \lambda-2^{-m}\right),
\]
which is even stronger than the conclusion of Theorem \ref{thrm:pulloutpi2Uniform}. In the context of Theorem \ref{thrm:pulloutpi2}, this result could then also have been established by first pulling the existential quantifier out of the outer content using uniform boundedness and Proposition \ref{pro:furtherPrenex}. 
\end{remark}

We now move to the semi-constructive case. Here, the result is generally a bit more immediate and we have greater flexibility in the assumptions, as before.

\begin{theorem}\label{thrm:pulloutpi2-constUniform}
In the context of the assumptions of Theorem \ref{thm:metaConstructive}, suppose that
\[
\mathcal{F}_i^{\omega}[\PP]+\FullUBOmega{\omega}+ \FullUBF{\omega} \vdash \forall m^0 \forall x^\tau \exists n^0 \left( \PP[\varphi(x,n)] \geq \lambda - 2^{-m} \right),
\] 
where $\tau$ is admissible and $\varphi(x,n,\varpi)$ is any formula such that all positively occurring universal quantifiers and all negatively occurring existential quantifiers have small types and all other types are admissible.

Then one can extract a primitive-recursive functional $\Phi:\mathbb{N}\times\mathcal{S}_{\widehat{\tau}}\to\mathbb{N}$ such that 
\[
\mathcal{S}^{\omega, \Omega, \Algebra}\models \forall m^0\forall {x^*}^{\widehat{\tau}}\forall x\lesssim_\tau x^* \, \exists n\leq_0\Phi(m,x^*) \left(\PP[\varphi(x,n)]\geq \lambda-2^{-m}\right)
\]
holds for $\mathcal{S}^{\omega, \Omega, \Algebra}$ defined via any non-empty set $\Omega$, algebra $S\subseteq 2^\Omega$, and probability content $\PP$ on $\Algebra$ (and with suitable interpretations of the additional constants).

If the theorem is established over $\mathcal{F}_i^{\omega}[\PP] + \FullUBOmega{\omega} + \FullUBF{\omega} \oplus\CCOmega{0} \oplus\CCF{0}$ instead, then the result remains true if $\varphi(x,n,\varpi)$ is an existential formula with the respective type restrictions.

We have the same remarks (1) -- (3) as in Theorem \ref{thm:metaConstructive}.
\end{theorem}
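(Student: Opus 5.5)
The plan is to unfold the abbreviation and read the statement directly as a $\forall\exists$-theorem to which Theorem \ref{thm:metaConstructive} (resp.\ Theorem \ref{thm:metaConstructiveCC}) applies. By Definition \ref{def-abbreviations}, the hypothesis $\forall m^0\forall x^\tau\exists n^0(\PP[\varphi(x,n)]\geq\lambda-2^{-m})$ is literally
\[
\forall m^0\,\forall x^\tau\,\exists n^0\,\forall A^\Algebra\big(\PP(A)<_\RR\lambda-2^{-m}\to\exists\varpi^\Omega\in A^c\,\varphi(x,n,\varpi)\big).
\]
In contrast to the classical Theorem \ref{thrm:pulloutpi2Uniform}, no epsilon-terms are needed to remove the quantifier $\forall A^\Algebra$. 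Theorem \ref{thm:metaConstructive} allows an arbitrary formula $\psi(m,x,n)$ after $\forall x\exists n$, as long as all positively occurring universal and all negatively occurring existential quantifiers have small types and all other types are admissible.

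The first step is therefore to take $\psi(m,x,n):=\PP[\varphi(x,n)]\geq\lambda-2^{-m}$, with $(m,x)$ as the tuple of inputs, which remark (1) permits. I then check the type conditions.
\begin{enumerate}
\item The positive universal quantifier $\forall A^\Algebra$ has type $\Algebra$, which is small.
\item The existential quantifier hidden in the negatively occurring $<_\RR$ is over type $0$.
\item The positive existential $\exists\varpi^\Omega$ has admissible type.
\item The quantifiers inside $\varphi$ keep their polarity, so they satisfy the conditions by hypothesis.
\end{enumerate}
Theorem \ref{thm:metaConstructive} then yields a primitive recursive $\Phi(m,x^*)$ with $\mathcal{S}^{\omega,\Omega,\Algebra}\models\forall m\,\forall x^*\,\forall x\lesssim x^*\,\exists n\leq\Phi(m,x^*)\,\psi(m,x,n)$. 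This is exactly the claimed conclusion once the abbreviation is folded back. The parameter $\lambda$, being a free variable of type $1$, is treated as a further input; it is majorized uniformly (e.g.\ by $\lambda^*:=\lambda n.2$ in the relevant range), or fixed as in the earlier theorems. Remarks (1)--(3) carry over verbatim.

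For the contra-collection clause, Theorem \ref{thm:metaConstructiveCC} requires the matrix to be existential with admissible internal types, and the $\forall A^\Algebra$ is in the way. Here I would weaken $\lambda-2^{-m}$ to $\lambda-2^{-m-1}$ and use $\IPU$ (available in $\mathcal{F}^\omega_i[\PP]$) to move $\exists\varpi$ outward, as in the proof of Proposition \ref{pro:furtherPrenexConst}. The route I would try first, however, mirrors the classical proof: introduce an epsilon-type constant $\chi_{\lambda,m,x,n}$ of type $\Algebra$ together with the type $\Delta$ axiom \eqref{firstepsilonAx}. This is admissible by remark (1), and it reduces the matrix to
\[
\PP(\chi_{\lambda,m,x,n})\leq_\RR\lambda-2^{-m}\to\exists\varpi\in\chi^c_{\lambda,m,x,n}\,\varphi_\exists(x,n,\varpi).
\]
Since $\leq_\RR$ is $\Pi^0_1$ in negative position, this is existential, and Theorem \ref{thm:metaConstructiveCC} applies. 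Afterwards, \eqref{firstepsilonAx} lets me recover the universal statement over all $A$ in $\mathcal{S}^{\omega,\Omega,\Algebra}$. The one subtlety is the shift between $<_\RR$ and $\leq_\RR$, which I absorb by running the argument with $m+1$ in place of $m$.

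The main obstacle is exactly this CC case. I need to ensure that adding $\chi$ and its axiom does not interfere with the restriction $\oplus$, which demands that $\CCOmega{0}$ and $\CCF{0}$ are not used in premises of $\QFER$. I would argue that the axiom \eqref{firstepsilonAx} is simply added to the base system, so it does not affect the deduction-theorem argument of Theorem \ref{thm:metaConstructiveCC}. I would also check that $\chi$ is majorizable (it maps into $\Algebra$) and satisfies \eqref{firstepsilonAx} in $\mathcal{S}^{\omega,\Omega,\Algebra}$, using its intended choice semantics.
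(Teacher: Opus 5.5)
Your proposal is correct in outline and follows the paper's proof: unfold \eqref{def-P-gt}, apply Theorem \ref{thm:metaConstructive} directly (the positive $\forall A^\Algebra$ has small type), and in the contra-collection case first remove $\forall A^\Algebra$ with an epsilon-constant $\chi$ as in \eqref{firstepsilonAx} before invoking Theorem \ref{thm:metaConstructiveCC}. Two details still need tidying. First, $\lambda n.2$ is not a majorant of a type-$1$ representative of a real; use the standard uniformly majorized representation of $[0,1]$, or keep $\lambda$ fixed. Second, once $\varphi$ is existential rather than quantifier-free, \eqref{firstepsilonAx} is not of type $\Delta$ as it stands, because the witnesses its conclusion demands for numerical existentials in $\varphi$ cannot in general be bounded uniformly in $A$. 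These witnesses must therefore first be bounded uniformly in $A$, e.g.\ via $\FullUBF{0}$, a step the paper's brief sketch also leaves implicit.
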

\begin{proof} 

As in the proof of Theorem \ref{thrm:pulloutpi2Uniform}, note that, in $\mathcal{F}_i^{\omega}[\PP]+\FullUBOmega{\omega} + \FullUBF{\omega}$, the statement $\forall m^0 \forall x^\tau \exists n^0 \left( \PP[\varphi(x,n)] \geq \lambda - 2^{-m} \right)$ is equivalent to
\[
\forall x^\tau\forall m^0\exists n^0\forall A^\Algebra(\PP(A)\leq_\RR \lambda -2^{-m}\to \exists \varpi\in A^c \, \varphi(x,n,\varpi)).
\]
The first result then immediately follows from Theorem \ref{thm:metaConstructive}. For the second result, we instead first require similar considerations on epsilon-terms as in the proof of Theorem \ref{thrm:pulloutpi2Uniform}, after which Theorem \ref{thm:metaConstructiveCC} then applies.
\end{proof}

\begin{remark}
As in Remark \ref{rem:monMeta}, suppose in the context of Theorem \ref{thrm:pulloutpi2-const} additionally that $\varphi$ is monotone in $n$. Then, by Proposition \ref{pro:implicationOuter}, for that same functional $\Phi$ we in fact have
\[
\mathcal{S}^{\omega, \Omega, \Algebra} \models \forall m^0\forall {x^*}^{\widehat{\tau}}\forall x\lesssim_\tau x^* \left( \PP[\varphi(x,\Phi(m,x^*))]\geq \lambda-2^{-m} \right),
\]
so that also here the monotonicity allows us to derive an even stronger result. As before, in the context of Theorem \ref{thrm:pulloutpi2-const}, this result could then also have been established by first pulling the existential quantifier out of the outer content using uniform boundedness, now as in Proposition \ref{pro:furtherPrenexConst}.
\end{remark}

\subsection{Probabilistic theorems of the form $\PP[\forall x^\tau\exists n^0\varphi(x,n)]\geq\lambda$}

We now consider theorems of the form \ref{strongProblem}. This formulation of a probabilistic $\forall\exists$-theorem actually proves to be quite subtle in various ways. Indeed, both the classical and semi-constructive metatheorems will prove to be considerably more difficult to establish than the previous results, with greatest difficulty in the classical case. In particular, we rely rather extensively on the use of uniform boundedness as well as epsilon-terms in the style of \cite{GuenzelKohlenbach2016} and as the result seems to be rather sensitive, we here in particular are very explicit on the use of epsilon-terms, compared to some of the previous results, where they were only mentioned in passing or briefly sketched. With these difficulties also come various slightly more severe restrictions on the reach of the metatheorem. However, while discussed and exemplified in Section \ref{sec:examples} later on, we want to already remark here that, currently, apart from one notable recent case study \cite{NeriPischkePowell2026}, no other use of formulations of the form \ref{strongProblem} has ever been observed in proof mining practice, with essentially all theorems naturally confining to the forms \ref{problem} and \ref{uniformProblem}. In that way, the resulting complications so far had a rather minimal impact on proof mining practice.

We now begin with the classical case. To simplify the presentation, we here restrict ourselves to $\tau\in\{0,1\}$ and we assume that $\varphi$ is quantifier-free, although both of these requirements can be relaxed in some ways.

\begin{theorem}\label{thrm:pulloutpi2Strong}
In the context of the assumptions of Theorem \ref{thm:metaClassical}, assume that
\[
\mathcal{F}^{\omega}[\PP]+\UBOmega{\omega}+\UBF{\omega}\vdash \PP[\forall x^\tau\exists n^0 \varphi_0(x,n)]\geq\lambda,
\] 
where $\tau$ is of degree $1$ and $\varphi_0(x,n,\varpi)$ is quantifier-free.

Then one can extract a total bar-recursive functional $\Phi:\mathbb{N}\times\mathcal{M}_{\tau(0(\tau))}\rightarrow \mathcal{M}_{0(\tau)}$ such that
\[
\mathcal{M}^{\omega,\Omega,\Algebra}\models\forall m\forall x^* \forall \tilde{x} \lesssim x^* \left(\PP[\exists N\lesssim \Phi(m,x^*)\forall x\leq \tilde{x} (N)\, \varphi_0(x,N(x))]\geq\lambda -2^{-m}\right)
\]
holds for $\mathcal{M}^{\omega, \Omega, \Algebra}$ defined via any non-empty set $\Omega$, algebra $S\subseteq 2^\Omega$, and probability content $\PP$ on $\Algebra$ (and with suitable interpretations of the additional constants).

If $\tau=0$, then the conclusion moreover holds in a model based on $\mathcal{S}^{\omega,\Omega,\Algebra}$, with $\Phi$ then being a partial function $\mathbb{N}\times\mathcal{S}_{2}\rightharpoonup \mathcal{M}_{1}$.

We have the same remarks (2) -- (4) as in Theorem \ref{thm:metaClassical}.
\end{theorem}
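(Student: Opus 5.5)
The plan is to reduce the statement $\PP[\forall x^\tau\exists n^0\varphi_0(x,n)]\geq\lambda$ to a $\forall\exists$-statement to which Theorem \ref{thm:metaClassical} applies. The strategy is to Skolemize the inner $\forall x\exists n$ as an existential over functions $N^{0(\tau)}$. Classically, $\forall x\exists n\,\varphi_0(x,n,\varpi)$ is equivalent, via $\QFAC$, to $\exists N^{0(\tau)}\forall x^\tau\,\varphi_0(x,N(x),\varpi)$. By almost-sure extensionality (Proposition \ref{pro:implicationOuter}, used pointwise in $\varpi$), the assumption therefore yields
\[
\forall A^\Algebra\left(\PP(A)<_\RR\lambda\to\exists\varpi\in A^c\,\exists N^{0(\tau)}\forall x^\tau\,\varphi_0(x,N(x),\varpi)\right).
\]
Classically, this is the same as saying that for each $m$ and $A$ with $\PP(A)\leq\lambda-2^{-m}$ there is a pair $(\varpi,N)$, with $\varpi\in A^c$, such that $\forall x\,\varphi_0(x,N(x),\varpi)$.

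To obtain a $\forall\exists$ form, I would first apply the ``Herbrand/no-counterexample'' trick. The statement $\exists\varpi,N\,\forall x\,\psi_0$ is classically equivalent to $\forall \tilde{x}^{\tau(0(\tau))}\exists\varpi,N\,\psi_0(\tilde{x}(N),N(\tilde x(N)),\varpi)$ in the direction we need. More precisely, I would pass to the monotone form with $\forall x\leq\tilde{x}(N)$, which is where the bounded quantifier in the conclusion comes from. Next, the existential over $\varpi^\Omega$ and the universal over $A^\Algebra$ are handled with epsilon-terms exactly as in the proof of Theorem \ref{thrm:pulloutpi2Uniform}. I would introduce a constant selecting, for given $\lambda,m,\tilde x$ and a candidate bound, a ``bad'' set $A$ if one exists, governed by a type $\Delta$ axiom. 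This makes the inner formula existential in $N$ with $\varpi$ absorbed.

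The essential difficulty is that $N$ is of type $0(\tau)$, not $0$. Theorem \ref{thm:metaClassical} only witnesses existential quantifiers of type $0$ (or tuples of them), so we cannot directly extract $N$. The fix is uniform boundedness at type $\rho=0(\tau)$, i.e.\ $\UBOmega{0(\tau)}$ and $\UBF{0(\tau)}$, which is available since $\tau$ has degree $1$ and $0(\tau)$ is then of a pure type form after coding. Applied with the $\Omega$/$\Algebra$-parameters (the universally quantified $\varpi$-Skolem data and $A$) playing the role of $\underline z$, it yields a $\chi$ such that an $N\leq\chi$ always exists. I then restate the goal as $\forall m,\tilde x\,\exists \chi$ and majorize: a majorant of $\chi$ is a functional of type $\widehat{0(\tau)}=0(\tau)$ that majorizes all witnesses $N$. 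So the extracted $\Phi(m,x^*)$ is a majorant $N^*$ with $N^*\gtrsim N$, which is precisely the ``$\exists N\lesssim\Phi(m,x^*)$'' in the conclusion.

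The soundness-plus-majorization argument of Theorem \ref{thm:metaClassical} then gives the statement in $\mathcal{M}^{\omega,\Omega,\Algebra}$. Since the output type $0(\tau)$ is not admissible for $\tau=1$, the conclusion can only be verified in $\mathcal{M}$ rather than in $\mathcal{S}$, matching the statement. For $\tau=0$, $N$ has type $1$ and the input $\tilde x$ has type $2$, so the usual transfer to $\mathcal{S}$ for small types applies, giving a partial $\Phi$ on $\mathbb{N}\times\mathcal{S}_2$. Finally, after undoing the epsilon axiom, the existence of a good $\varpi\in A^c$ for every $A$ of measure at most $\lambda-2^{-m}$ is exactly $\PP[\exists N\lesssim\Phi\,\forall x\leq\tilde x(N)\,\varphi_0]\geq\lambda-2^{-m}$.

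I expect the main obstacle to be the interplay between the epsilon-term elimination of $A$ and $\varpi$ and the higher-type uniform boundedness for $N$. The epsilon constant must itself be majorizable and must not depend on $N$ in a way that breaks the $\min_\alpha$-intensional form required by Definition \ref{def:classUB}. I would first try to place $\tilde x$ in the $y$-slot with $\alpha=\tau(0(\tau))$ and check extensionality by $\QFER$.
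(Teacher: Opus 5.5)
Your reduction to the $\forall\exists$-form $\forall m\,\forall A\,\forall\tilde x\,\exists N\,(\PP(A)\le_\RR\lambda-2^{-m}\to\exists\varpi\in A^c\,\forall x\le_\tau\tilde x(N)\,\varphi_0(x,N(x),\varpi))$ is fine, and it is the same statement the paper reaches. You are right that only the trivial direction of the Herbrandization is used. The paper states this step as an equivalence, and that equivalence does need $\UBOmega{\tau}$, since $\tilde x$ cannot see $\varpi$.

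The step you single out as the fix does not work, though. Applying $\UBOmega{0(\tau)}$ or $\UBF{0(\tau)}$ only gives a bound $\chi$ of the same type $0(\tau)$ as $N$. So Theorem \ref{thm:metaClassical} applies to $\forall m,\tilde x\,\exists\chi$ no better than to $\exists N$, and you have also put $\forall A\,\exists N\le_{0(\tau)}\chi$ into the matrix. What actually yields $\Phi$ is that the negative functional interpretation plus majorization (Lemmas 8.3 and 8.8 of \cite{NeriPischke2025}) produce majorants for realizers of existential quantifiers of \emph{any} type. The paper applies this directly to $\exists N$, with $A$ majorized by the constant $1$. The majorant $\Phi(m,x^*)$ is then automatically uniform in $A$, and it is verified in $\mathcal{M}^{\omega,\Omega,\Algebra}$ as $\exists N\lesssim\Phi(m,x^*)$. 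Your closing remark that a majorant of $\chi$ majorizes every $N$ is exactly this mechanism, but it applies to $N$ directly, so the uniform boundedness step should be dropped. For the same reason you need no epsilon-constant selecting bad sets $A$. That device from Theorem \ref{thrm:pulloutpi2Uniform} handles the order $\exists n\,\forall A$, whereas here $\forall A$ already precedes $\exists N$.

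What you leave unaddressed is the matrix when $\tau=1$. Then $\forall x\le_1\tilde x(N)\,\varphi_0(x,N(x),\varpi)$ is not quantifier-free, and $x\le_1 y$ is itself a universal formula. So the matrix is not existential, and neither Theorem \ref{thm:metaClassical} nor its proof applies as stated. The interpretation introduces a counterexample functional for $x$, and you must show the bound does not depend on it.

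The paper handles this as follows:
\begin{itemize}
\item It passes to the intensional form $\forall x^1\,\varphi_0(\min_1(x,x^*),N(\min_1(x,x^*)),\varpi)$.
\item It adds a majorizable epsilon-constant $\phi^{1(0(1))(1)}$ governed by the universal axiom $z(\min_1(x,y))=_00\to z(\min_1(\phi(y,z),y))=_00$. An epsilon axiom phrased directly with $\forall x\le_1 y$ would not be universal.
\item A second epsilon-term absorbs $\exists\varpi\in A^c$, so the whole matrix becomes a quantifier-free $\theta_0(\tilde x(N),N)$.
\item Only at the end, inside $\mathcal{M}^{\omega,\Omega,\Algebra}$ where extensionality holds, is $\forall x\,\varphi_0(\min(x,\tilde x(N)),N(\min(x,\tilde x(N))))$ turned back into $\forall x\le\tilde x(N)\,\varphi_0(x,N(x))$.
\end{itemize}

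Alternatively, you could carry the counterexample functional $\mathcal{X}$ through. You would then have to check explicitly that, in the verification in $\mathcal{M}^{\omega,\Omega,\Algebra}$, $\mathcal{X}$ can always be chosen with values $\le_1\tilde x(N)$, hence majorized by $\lambda N^*,\varpi^*.x^*(N^*)$. Either way, some such argument must be supplied. For $\tau=0$ the issue disappears, because the bounded quantifier is then a bounded number quantifier.
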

\begin{proof}
We focus on the more complex case $\tau=1$. Expanding the abbreviation, we have that $\mathcal{F}^{\omega}[\PP]+\UBOmega{\omega}+\UBF{\omega}$ proves
\[
\forall m^0\forall A^\Algebra(\PP(A)\leq_\RR \lambda -2^{-m}\to \exists \varpi\in A^c\forall x^\tau\exists n^0\varphi_0(x,n,\varpi)).
\]
In particular, using $\QFAC$, the above is equivalent to
\[
\forall m^0\forall A^\Algebra(\PP(A)\leq_\RR \lambda -2^{-m}\to \exists N^{0(\tau)}\exists \varpi\in A^c\forall x^\tau\varphi_0(x,Nx,\varpi)).
\]
Using $\UBOmega{\tau}$ and classical logic, this is further equivalent to
\[
\forall m^0\forall A^\Algebra(\PP(A)\leq_\RR \lambda -2^{-m}\to \exists N^{0(\tau)}\forall {x^*}^\tau\exists \varpi\in A^c\forall x\leq_\tau x^*\,\varphi_0(x,Nx,\varpi)).
\]
This yields
\[
\forall m^0\forall A^\Algebra\exists N^{0(\tau)}\forall {x^*}^\tau(\PP(A)\leq_\RR \lambda -2^{-m}\to \exists \varpi\in A^c\forall x^\tau\varphi_0(\min_\tau(x,x^*),N\min_\tau(x,x^*),\varpi)).\tag{\%}\label{firstClass}
\]
We now add an epsilon-term $\phi^{1(0(1))(1)}$ in the style of \cite{GuenzelKohlenbach2016} to the language with the intended semantics that $\phi(y^1,z^{0(1)}):=x\leq_1 y$ for $x$ such that $z(x)=_00$, if existent, and $\phi(y^1,z^{0(1)}):=\lambda n^0.0$ otherwise. This constant is governed by the following universal axiom:
\[
\forall x^1,y^1,z^{0(1)}\left(z(\min_1(x,y))=_00\to z(\min_1(\phi(y,z),y))=_00\right).\tag{$\phi$}\label{epsilonAx}
\]
The formal semantics, which later assures majorizability of that constant, is now as follows: over $\mathcal{S}^{\omega,\Omega,\Algebra}$, it is interpreted as 
\[
\phi(y^1,z^{0(1)}):=_1\begin{cases}\min_1(x,y)&\text{for }x^1\text{ with }z(\min_1(x,y))=_00\text{ if existent},\\\lambda n^0.0&\text{otherwise}.\end{cases}
\]
As $\min_1(\min_1(x,y),y)=\min_1(x,y)$, this satisfies the axiom. Let now $t_{\neg\varphi}$ be the term such that
\[
t_{\neg\varphi}(x,N,\varpi)=_00\leftrightarrow \neg\varphi_0(x,Nx,\varpi).
\]
Applying \eqref{epsilonAx} to $\lambda x^1.t_{\neg\varphi}(x,N,\varpi)$ and $x^*$ yields
\begin{gather*}
\varphi_0(\min_1(\phi(x^*,\lambda x^1.t_{\neg\varphi}(x,N,\varpi)),x^*),N\min_1(\phi(x^*,\lambda x^1.t_{\neg\varphi}(x,N,\varpi)),x^*),\varpi)\\
\rightarrow \varphi_0(\min_1(x,x^*),N\min_1(x,x^*),\varpi)
\end{gather*}
so that 
\begin{gather*}
\varphi_0(\min_1(\phi(x^*,\lambda x^1.t_{\neg\theta}(x,N,\varpi)),x^*),N\min_1(\phi(x^*,\lambda x^1.t_{\neg\theta}(x,N,\varpi)),x^*),\varpi)\\
\leftrightarrow \forall x^1\varphi_0(\min_1(x,x^*),N\min_1(x,x^*),\varpi).
\end{gather*}
Using another ``epsilon''-term, we hence get a quantifier-free formula $\theta_0(x^*,N)$ such that
\[
\theta_0(x^*,N)\leftrightarrow \exists \varpi\in A^c\forall x^\tau\varphi_0(\min_\tau(x,x^*),N\min_\tau(x,x^*),\varpi).
\]
The above formula \eqref{firstClass} is hence equivalent
\[
\forall m^0\forall A^\Algebra\exists N^{0(\tau)}\forall {x^*}^\tau(\PP(A)\leq_\RR \lambda -2^{-m}\to \theta_0(x^*,N))
\]
and using $\QFAC$ to 
\[
\forall m\forall A\forall {\tilde{x}}^*\exists N(\PP(A)\leq_\RR \lambda -2^{-m}\to \theta_0({\tilde{x}}^*(N),N)).
\]
The (negative) functional interpretation (see Lemma 8.3 in \cite{NeriPischke2025}) combined with majorization in the model $\mathcal{M}^{\omega,\Omega,\Algebra}$ (see Lemma 8.8 in \cite{NeriPischke2025} and recall the proof of Theorem \ref{thm:metaClassical}) now yields a functional $\Phi$ such that
\[
\mathcal{M}^{\omega,\Omega,\Algebra}\models\forall m\forall A\forall x^* \forall \tilde{x} \lesssim x^* \exists N\lesssim \Phi(m, x^*)(\PP(A)\leq_\RR \lambda -2^{-m}\to \theta_0(\tilde{x}(N),N))
\]
or, expanded, 
\begin{gather*}
\forall m\forall A\forall x^* \forall \tilde{x} \lesssim x^* \exists N\lesssim \Phi(m,x^*)(\PP(A)\leq_\RR \lambda -2^{-m}\\
\to \exists \varpi\in A^c\forall x\,\varphi_0(\min(x, \tilde{x}(N)), N(\min(x, \tilde{x}(N))),\varpi))
\end{gather*}
which is equivalent to
\[
\forall m\forall x^* \forall \tilde{x} \lesssim x^* \PP[\exists N\lesssim \Phi(m, x^*)\forall x\, \varphi_0(\min(x, \tilde{x}(N)),N(\min(x, \tilde{x}(N))))]\geq\lambda -2^{-m}.
\]
As we are working over the model $\mathcal{M}^{\omega,\Omega,\Algebra}$, we now get (using extensionality) that the above is equivalent to 
\[
\mathcal{M}^{\omega,\Omega,\Algebra}\models\forall m\forall x^* \forall \tilde{x} \lesssim x^* \PP[\exists N\lesssim \Phi(m, x^*)\forall x\leq \tilde{x}(N)\, \varphi_0(x,N(x))]\geq\lambda -2^{-m},
\]
as claimed. That this reduces to truth in a model based on $\mathcal{S}^{\omega,\Omega,\Algebra}$ if $\tau=0$ follows immediately as the types are then low-enough (see in particular the discussion in the proof of Theorem 17.52 in \cite{Kohlenbach2008}). 
\end{proof}

Note that already for $\tau=0$, the above modulus $\Phi$ is a functional of type $3$, which is quite high. Indeed, such a modulus has never been observed in practice, as will also be discussed in more details in the next section.

As before, we now also consider the special case of semi-constructively proven theorems of the form $\PP[\forall x^\tau\exists n^0\varphi(x,n)]\geq\lambda$. Again, the corresponding metatheorem is harder to establish than that given in Theorem \ref{thrm:pulloutpi2-const}, though only slightly more difficult. In particular, we here rely on a model-theoretic result due to Kohlenbach \cite{Kohlenbach2011}.

\begin{theorem}\label{thrm:pulloutpi2-constStrong}
In the context of the assumptions of Theorem \ref{thm:metaConstructive}, assume that
\[
\mathcal{F}_i^{\omega}[\PP]+\FullUBOmega{\omega}+ \FullUBF{\omega} \vdash \PP[\forall x^\tau\exists n^0 \varphi(x,n)]\geq\lambda,
\] 
where $\tau$ is admissible and $\varphi(x,n,\varpi)$ is any formula such that all positively occurring universal quantifiers and all negatively occurring existential quantifiers have small types and all other types are admissible.

Then one can extract a primitive-recursive functional $\Phi:\mathbb{N}\times\mathcal{S}_{\widehat{\tau}}\to\mathbb{N}$ such that
\[
\mathcal{S}^{\omega, \Omega, \Algebra}\models \forall m^0\left( \PP[\forall x^* \forall x\lesssim_\tau x^*\exists n\leq_0\Phi(m,x^*)\, \varphi(x,n)]\geq \lambda-2^{-m}\right)
\]
holds for $\mathcal{S}^{\omega, \Omega, \Algebra}$ defined via any non-empty set $\Omega$, algebra $S\subseteq 2^\Omega$, and probability content $\PP$ on $\Algebra$ (and with suitable interpretations of the additional constants).

We have the same remarks (1) -- (3) as in Theorem \ref{thm:metaConstructive}.
\end{theorem}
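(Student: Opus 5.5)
Expanding the abbreviation, the hypothesis says that $\mathcal{F}_i^{\omega}[\PP]+\FullUBOmega{\omega}+\FullUBF{\omega}$ proves
\[
\forall m^0\forall A^\Algebra\big(\PP(A)\leq_\RR\lambda-2^{-m}\to\exists\varpi\in A^c\,\forall x^\tau\exists n^0\varphi(x,n,\varpi)\big).
\]
Unlike the case of Theorem \ref{thrm:pulloutpi2-const}, the universal quantifier over $x$ sits inside the existential quantifier over $\varpi$, so it cannot be pulled out directly. The plan is to apply the semi-constructive machinery to this formula as a whole. We then read off the structure of the resulting realizers, instead of instantiating a $\forall\exists$-statement.

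\textbf{Step 1: interpret and extract a realizer.} Apply the monotone functional interpretation, combined with majorization as in Theorem \ref{thm:metaConstructive}, to the displayed statement. For fixed $m$ and $A$, the interpretation of the conclusion $\exists\varpi\,\forall x\,\exists n\,\varphi$ consists of a realizer $\varpi$ of type $\Omega$ together with a functional $N^{0(\tau)}$ realizing $\forall x\exists n$, plus whatever realizers $\varphi$ itself contributes. Soundness yields closed terms $t_\varpi(m,A)$ and $t_N(m,A)$ such that, in the model $\mathcal{M}^{\omega,\Omega,\Algebra}$, whenever $\PP(A)\leq\lambda-2^{-m}$ we have $t_\varpi(m,A)\in A^c$ and $\forall x\,\varphi(x,t_N(m,A)x,t_\varpi(m,A))$. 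Here $\varphi_D$ is only required to hold for the relevant challenge arguments, but the positive occurrences of universal quantifiers have small types, so this can be lifted as usual. No pointwise bound on $n$ is available yet.

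\textbf{Step 2: majorize the realizer uniformly in $A$.} Majorize $t_N$. Since $A$ has type $\Algebra$ and is majorized by the constant $1$, we obtain a primitive recursive $\Phi^*(m)$ of type $\widehat{0(\tau)}$ with $\Phi^*(m)\gtrsim t_N(m,A)$ for all $A$. Put $\Phi(m,x^*):=\Phi^*(m)x^*$. Then for every $x\lesssim_\tau x^*$ we get $t_N(m,A)x\leq\Phi(m,x^*)$, and hence
\[
\exists\varpi\in A^c\,\forall x^*\forall x\lesssim_\tau x^*\exists n\leq\Phi(m,x^*)\,\varphi(x,n,\varpi),
\]
uniformly for all $A$ with $\PP(A)\leq\lambda-2^{-m}$. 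This is exactly $\PP[\forall x^*\forall x\lesssim x^*\exists n\leq\Phi(m,x^*)\varphi]\geq\lambda-2^{-m}$ in $\mathcal{M}^{\omega,\Omega,\Algebra}$. Strict versus non-strict inequalities are handled by shifting $m$.

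\textbf{Step 3: transfer to $\mathcal{S}^{\omega,\Omega,\Algebra}$ (main obstacle).} The statement now quantifies over $\varpi$ and over all $x\lesssim x^*$ inside the outer content. The usual transfer, which only handles formulas with low types, cannot be applied pointwise here, because the witness $\varpi$ was produced in $\mathcal{M}$ with respect to the $\mathcal{M}$-elements $x$. I would invoke Kohlenbach's model-theoretic result \cite{Kohlenbach2011}: for admissible $\tau$, every $x\in\mathcal{S}_\tau$ with a majorant $x^*\in\mathcal{S}_{\widehat{\tau}}$ already lies in $\mathcal{M}_\tau$, and the relevant sub-formulas of $\varphi$ have the same truth values in both models under the stated type restrictions. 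With this, the quantifier $\forall x\lesssim x^*$ ranges over the same objects in both models, and $\Omega$ and $\Algebra$ coincide. So the conclusion about $\PP[\cdot]$ transfers verbatim.

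I expect this step, together with checking that the interpretation of $\varphi$ really yields full truth of $\forall x\,\varphi(x,N x,\varpi)$ rather than only its Dialectica matrix, to carry the main difficulty. Remarks (1)–(3) follow as in Theorem \ref{thm:metaConstructive}.
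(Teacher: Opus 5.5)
Your proposal is correct and is essentially the paper's proof: the paper makes your Step 1 explicit by first using $\AC$ to pass to $\exists N^{0(\tau)}\exists\varpi\in A^c\forall x\,\varphi(x,Nx,\varpi)$ and $\IPU$ to pull $\exists N$ in front of the universal premise $\PP(A)\leq_\RR\lambda-2^{-m}$, and then applies the functional interpretation with majorization to get $\mathcal{M}^{\omega,\Omega,\Algebra}\models\forall m\,(\PP[\exists N\lesssim\Phi(m)\forall x\,\varphi(x,Nx)]\geq\lambda-2^{-m})$, which is what your realizer $t_N(m,A)$, majorized uniformly in $A$, delivers. The only difference is in the citations: the paper uses Theorem 2.5 of \cite{Kohlenbach2011} to pass from this form to $\forall x^*\forall x\lesssim_\tau x^*\exists n\leq_0\Phi(m,x^*)\,\varphi(x,n)$ (you check the direction actually needed by hand in Step 2), while the final transfer to $\mathcal{S}^{\omega,\Omega,\Algebra}$ is the standard admissible-types argument from the proof of Theorem 17.52 in \cite{Kohlenbach2008}, not that theorem.
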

\begin{proof}
Expanding the abbreviation, we have that $\mathcal{F}^{\omega}_i[\PP]+\FullUBOmega{\omega}+\FullUBF{\omega}$ proves
\[
\forall m^0\forall A^\Algebra(\PP(A)\leq_\RR \lambda -2^{-m}\to \exists \varpi\in A^c\forall x^\tau\exists n^0\varphi(x,n,\varpi)).
\]
Using $\AC$, we get
\[
\forall m^0\forall A^\Algebra(\PP(A)\leq_\RR \lambda -2^{-m}\to \exists N^{0(\tau)}\exists \varpi\in A^c\forall x^\tau\varphi(x,Nx,\varpi)).
\]
The principle $\IPU$ yields
\[
\forall m^0\forall A^\Algebra\exists N^{0(\tau)}(\PP(A)\leq_\RR \lambda -2^{-m}\to \exists \varpi\in A^c\forall x^\tau\varphi(x,Nx,\varpi)).
\]
Using the (negative) functional interpretation (see again Lemma 8.3 in \cite{NeriPischke2025}) combined with majorization in the model $\mathcal{M}^{\omega,\Omega,\Algebra}$ (see again Lemma 8.8 in \cite{NeriPischke2025} and Theorem \ref{thm:metaClassical}), we get a majorizable functional $\Phi:\mathbb{N}\to\mathcal{M}_{0(\widehat{\tau})}$ such that:
\[
\mathcal{M}^{\omega,\Omega, \Algebra} \models \forall m^0 \left(\PP[\exists N\lesssim_{0(\tau)}\Phi(m)\forall x^\tau\varphi(x,Nx,\varpi)]\geq\lambda-2^{-m}\right).
\]
By Theorem 2.5 from \cite{Kohlenbach2011}, the above is equivalent to
\[
\mathcal{M}^{\omega,\Omega, \Algebra} \models \forall m^0 \left(\PP[\forall {x^*}^{\widehat{\tau}}\forall x\lesssim_\tau x^*\exists n\leq_0\Phi(m,x^*)\, \varphi(x,n,\varpi)]\geq\lambda-2^{-m}\right).
\]
As $\tau$ is admissible, we get that the resulting claim is also true in $\mathcal{S}^{\omega,\Omega, \Algebra}$ (recall again the discussion in the proof of Theorem 17.52 in \cite{Kohlenbach2008}). 
\end{proof}

\subsection{Relationships between \ref{strongProblem}, \ref{problem} and \ref{uniformProblem}} 

We now want to comment on the relationship between the statements \ref{strongProblem} -- \ref{uniformProblem}, and their respective computational interpretations, under various natural assumptions on the types and associated problem matrices. First, let us recall  the various types of computational content that these different statements generate:

The computational content of statements of the form $\PP[\forall x^\tau\exists n^0\,\varphi(x,n)]\geq\lambda$, that is of the form \ref{strongProblem}, was dependent in form already on whether this statement is considered in a classical or semi-constructive context, even when the internal matrix $\varphi$ is quantifier-free. Concretely, in a semi-constructive context this content takes the form of a functional $\Phi$ such that 
\[
\forall m^0\left(\PP[\forall x^* \forall x\lesssim_\tau x^*\exists n\leq_0\Phi(m,x^*)\, \varphi(x,n)]\geq \lambda-2^{-m}\right),\tag*{($+$)$^i_1$}\label{IntStrongConst}
\]
while, in a classical context, this content takes the form of a functional $\Phi$ such that 
\[
\forall m\forall x^* \forall \tilde{x} \lesssim x^* \left(\PP[\exists N\lesssim \Phi(m,x^*)\forall x\leq \tilde{x}(N)\, \varphi_0(x,N(x))]\geq\lambda -2^{-m}\right).\tag*{($+$)$^c_1$}\label{IntStrongClass}
\]
The associated statement of the form \ref{problem} where the universal quantifier is prenexed, that is $\forall x^\tau\left(\PP[\exists n^0\,\varphi(x,n)]\geq\lambda\right)$, was not so sensitive in form, resulting generally (that is classically or semi-constructively) in functionals $\Phi$ such that
\[
\forall m^0\forall x^*\forall x\lesssim_\tau x^*
\left(\PP[\exists n\leq_0\Phi(m,x^*)\, \varphi(x,n)]\geq \lambda-2^{-m}\right).\tag*{($+$)$_2$}\label{IntWeak}
\]
Lastly, if further prenexed appropriately by also drawing out the existential quantifier, at the expense of a probabilistic error, that is moving to the statement $\forall m^0\forall x^\tau\exists n^0\left(\PP[\varphi(x,n)]\geq\lambda-2^{-m}\right)$ of the form \ref{uniformProblem}, we obtain a functional $\Phi$ such that
\[
\forall m^0\forall x^* \forall x\lesssim_\tau x^*\exists n\leq_0\Phi(m,x^*)\left(\PP[\varphi(x,n)]\geq \lambda-2^{-m}\right).\tag*{($+$)$_3$}\label{IntUniform}
\]

The relation between the different problem formulations and their resulting computational interpretations proves to be quite subtle, so that instead of providing a comprehensive answer, we are restricted to a collection of specialized observations.

First, note that by Proposition \ref{pro:initialPrenex} both \ref{strongProblem} and \ref{uniformProblem} imply \ref{problem}, without any additional assumptions on the matrix $\varphi$ or the type $\tau$. In particular, the resulting interpretations \ref{IntStrongClass} and \ref{IntUniform} primitive recursively imply \ref{IntWeak} (as is perhaps apparent from their formulation already). Further, intuitionistically \ref{strongProblem} implies its own negative translation, so that \ref{IntStrongConst} primitive recursively implies \ref{IntStrongClass}. Putting these relations into a picture, we get the following overview:
\begin{center}
\begin{tikzpicture}
\node at (0,0) (2) {$(*)_2$};
\node [below left of=2,xshift=-.5cm] (1) {$(*)_1$};
\node [above left of=2,xshift=-.5cm] (3) {$(*)_3$};
\draw[->] (1) -- (2);  
\draw[->] (3) -- (2);
\node at (8,0) (n2) {$(+)_2$};
\node [below left of=n2,xshift=-.6cm] (n1c) {$(+)^c_1$};
\node [left of=n1c,xshift=-.4cm] (n1i) {$(+)^i_1$};
\node [above left of=n2,xshift=-.6cm] (n3) {$(+)_3$};
\draw[->] (n1i) -- (n1c);  
\draw[->] (n1c) -- (n2);  
\draw[->] (n3) -- (n2); 
\node [xshift=-1cm] at ($(2)!0.5!(n2)$) (viz) {viz.};
\end{tikzpicture}
\end{center}
In other words, pulling the first universal quantifier out of the (outer) measure makes the problem considerably weaker, while pulling the second existential quantifier out too, with a probabilistic error, strengthens the problem again. In particular, we currently know of \emph{no} relation between \ref{strongProblem} and \ref{uniformProblem} (viz.\ \ref{IntStrongClass}, \ref{IntStrongConst} and \ref{IntUniform}) and it seems that no further relations can be directly inferred without further assumptions.

Now, if $\varphi(x,n,\varpi)$ is both monotone in $n$ and quantifier-free, then Propositions \ref{pro:furtherPrenex} and \ref{pro:furtherPrenexConst} guarantee  that \ref{problem} implies back \ref{uniformProblem}, a formal version of their mathematical equivalence for measures (recall Remark \ref{rem:prenex}), established, however, only under the set-theoretically false principle of uniform boundedness, which nevertheless has no impact on bound extractions so that in that context, both statements can be used interchangeably.

An immediate question is whether this equivalence somehow extends to more complex matrices. Here, we can give an answer at least for the nearest case of (measurable and suitably monotone) $\Pi^0_1$-matrices, by generalizing a mathematically motivated construction of Avigad, Dean and Rute \cite{AvigadDeanRute2012} to a more general context. In \cite{AvigadDeanRute2012}, the authors show how specific modes of finitary probabilistic convergence relate to each other, corresponding to specific instantiations of \ref{problem} and \ref{uniformProblem} if seen in the context of the present paper, and by that give a quantitative variant of Egorov's theorem \cite{Egoroff1911} on the equivalence between pointwise and uniform almost-sure convergence of random variables, as will also be discussed later in Section \ref{sec:examples}. This result was formally recognized in the context of the present systems in \cite{NeriPischke2025}, in which it was in particular observed that the corresponding result already holds over contents. While all of this was tailored to specific modes of convergence, we can now actually recognize this formal representation of the results from \cite{AvigadDeanRute2012} given in \cite{NeriPischke2025} as the blueprint of a general construction for the equivalence of \ref{problem} and \ref{uniformProblem} for $\Pi^0_1$-matrices, beyond convergence.

As hinted on before, the key assumption for this will be the measurability of the matrix. Concretely, we will in the following assume that $\varphi(x^0,n^0,\varpi^\Omega):=\forall k^0\varphi_0(x,n,k,\varpi)$ and that $\varphi_0(x,n,k,\varpi)$ is quantifier-free and measurable in the sense that there is a term $P_{x,n,k}$ of type $\Algebra(0)(0)(0)$ such that
\[
\forall x^0\forall n^0\forall k^0\forall\varpi^\Omega\left(\varpi\in P_{x,n,k}\leftrightarrow \varphi_0(x,n,k,\varpi)\right)\tag{$M$}\label{measAx}
\]
is provable in the system.\footnote{As such, this can immediately be ensured formally by adding a corresponding (majorizable) constant $P$ of the respective type together with the above principle as an additional (universal) axiom.} Then, instead of working with the formula $\varphi_0$, the construction given in \cite{AvigadDeanRute2012} and further abstracted in \cite{NeriPischke2025} relies on working with the sets $P_{x,n,m}$ and their probabilities more abstractly. The key result for that is the following combinatorial result established in \cite{AvigadDeanRute2012} and then formalized over $\mathcal{F}^\omega[\PP]$ in \cite{NeriPischke2025}:

\begin{lemma}[Theorem 9.7 in \cite{NeriPischke2025}]\label{lem:ADRCombinatorial}
The system $\mathcal{F}^\omega[\PP]$ proves:
\[
\forall A^{\Algebra(0)}, N^{0(1)}, u^0,v^0>_0 u\exists i^0\left(\forall g^1\left(\PP\left(\bigcap_{l=0}^{N(g)}\bigcup_{j=l}^{g(l)}A_j\right)\leq_\mathbb{R}2^{-v}\right) \to \PP(A_i)<_\mathbb{R}2^{-u}\right).
\]
\end{lemma}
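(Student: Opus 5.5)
The plan is to argue classically by contradiction. Fix $A^{\Algebra(0)}$, $N^{0(1)}$, $u$ and $v>_0 u$, and suppose that no $i$ works. Unfolding the implication, this means that $\PP(A_i)\geq_\mathbb{R}2^{-u}$ holds for all $i^0$, and also that $\forall g^1\,\PP\bigl(\bigcap_{l=0}^{N(g)}\bigcup_{j=l}^{g(l)}A_j\bigr)\leq_\mathbb{R}2^{-v}$. The task is then to construct a single $g$ for which the intersection has probability strictly above $2^{-v}$. This is the Avigad--Dean--Rute idea of choosing $g(l)$ so that the finite union $\bigcup_{j=l}^{g(l)}A_j$ nearly exhausts the tail union starting at $l$.

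First I construct $g$. For each $l$, the partial unions $\PP\bigl(\bigcup_{j=l}^{K}A_j\bigr)$ form a nondecreasing sequence in $K$, bounded by $1$. By the Cauchy form of the monotone convergence theorem, which $\mathcal{F}^\omega[\PP]$ proves (as already used in Proposition~\ref{Prop:elim:sigma}), for $\varepsilon_l:=2^{-v-l-2}$ there is a $K\geq l$ such that
\[
\forall M\geq_0 K\left(\PP\Bigl(\bigcup_{j=l}^{M}A_j\Bigr)\leq_\mathbb{R}\PP\Bigl(\bigcup_{j=l}^{K}A_j\Bigr)+\varepsilon_l\right).
\]
To get a function $l\mapsto K$, I choose such a $K$ for every $l$ simultaneously and set $g(l):=K$. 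The matrix is $\Pi^0_1$ (it is a universally quantified $\leq_\mathbb{R}$). So I would first weaken it to a rational slack, making the relevant comparisons decidable up to the error, and then apply $\DC$ (or $\QFAC$ after that slack adjustment) to obtain $g$.

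Next, put $n:=N(g)$, $B_l:=\bigcup_{j=l}^{g(l)}A_j$, $M:=\max_{l\leq n}g(l)$ and $C_l:=\bigcup_{j=l}^{M}A_j$. Then $B_l\subseteq C_l$, and since $M\geq g(l)$, the choice of $g$ gives $\PP(C_l\cap B_l^c)=\PP(C_l)-\PP(B_l)\leq\varepsilon_l$. The sets $C_l$ decrease in $l$, so $\bigcap_{l\leq n}C_l=C_n\supseteq A_n$. Finite additivity and subadditivity of $\PP$ on the algebra then give
\[
\PP\Bigl(\bigcap_{l=0}^{n}B_l\Bigr)\geq\PP(C_n)-\sum_{l=0}^{n}\PP(C_l\cap B_l^c)\geq\PP(A_n)-2^{-v-1}\geq 2^{-u}-2^{-v-1}>2^{-v},
\]
where the last step uses $v>u$, so that $2^{-u}\geq 2^{-v+1}$. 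This contradicts the assumed bound for this $g$, so some $i$ must work, which by classical logic proves the claim.

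Everything in the last step is finite set algebra together with additivity and subadditivity of the content, and is routine in $\mathcal{F}^\omega[\PP]$. I expect the main obstacle to be the formal construction of $g$. There one has to choose a near-exhausting index for every $l$ at once, from a statement with a universal quantifier over $M$, which requires $\DC$ together with careful bookkeeping of the real-number inequalities as rational approximations. Alternatively, one could take $g$ to be an explicit rate of metastability functional for the monotone sequences, to avoid the stronger choice.
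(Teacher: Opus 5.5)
Your argument is correct and follows the approach behind the cited Theorem~9.7 (the Avigad--Dean--Rute argument). You argue by contradiction, use the Cauchy form of monotone convergence to pick for each $l$ a near-exhausting index $g(l)$ with error $2^{-v-l-2}$, and collect these into a function via $\Pi^0_1\mbox{-}\mathsf{AC}$ (available through $\DC$, which is exactly the principle the paper says the lemma crucially relies on); the finite-additivity estimate at $n=N(g)$ with the common cut-off $M=\max_{l\le n}g(l)$ is what makes it work for contents. Your two side alternatives do not work as stated, however: a rational slack does not remove the unbounded quantifier $\forall M\geq_0 K$, so $\QFAC$ cannot replace $\Pi^0_1\mbox{-}\mathsf{AC}$, and a rate of metastability only controls a window fixed in advance, whereas the relevant $M$ depends on $g$ itself through $N(g)$ (resolving this circularity is precisely where bar recursion enters).
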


This result is far from trivial and is in particular the source of the computational complexity of the resulting quantitative variant of our result, as it crucially relies on $\Pi^0_1\mbox{-}\mathsf{AC}$, which is contained in $\DC$ and hence in $\mathcal{F}^\omega[\PP]$.

The result we then get is the following, which closely follows the proof strategy of Theorem 9.9 in \cite{NeriPischke2025}.

\begin{proposition}\label{pro:AvigadEquiv}
Let $\varphi_0(x^0,n^0,k^0,\varpi^\Omega)$ be measurable and quantifier-free, and monotone in $n$ as well as anti-monotone in $k$. Over $\mathcal{F}^\omega[\PP]+\UBOmega{0}+\UBF{0}$, the following are equivalent:
\begin{enumerate}
\item $\forall x^0\left(\PP[\exists n^0\forall k^0\varphi_0(x,n,k)]\geq 1\right)$,
\item $\forall m^0\forall x^0\exists n^0\left(\PP[\forall k^0\varphi_0(x,n,k)]\geq 1-2^{-m}\right)$.
\end{enumerate}
\end{proposition}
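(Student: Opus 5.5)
The direction (2)$\Rightarrow$(1) should be elementary. Fix $x$ and $m$, and take $n$ from (2) with $\PP[\forall k^0\varphi_0(x,n,k)]\geq 1-2^{-m}$. Since $\forall k\,\varphi_0(x,n,k,\varpi)$ implies $\exists n'\forall k\,\varphi_0(x,n',k,\varpi)$ pointwise, Proposition \ref{pro:implicationOuter} (with $A_0=\emptyset$) gives $\PP[\exists n^0\forall k^0\varphi_0(x,n,k)]\geq 1-2^{-m}$. This holds for every $m$, so item (4) of Proposition \ref{pro:initialPrenex} with $\lambda=1$ gives (1). No uniform boundedness is needed here.

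For (1)$\Rightarrow$(2) I would argue classically, following the scheme of Theorem 9.9 in \cite{NeriPischke2025}. The first step is to remove the inner quantifiers. Since $\varphi_0$ is quantifier-free and anti-monotone in $k$, item (1) of Proposition \ref{pro:furtherPrenex} with $\UBOmega{0}$, together with Proposition \ref{pro:measurable} and (\ref{measAx}), yields
\[
\PP[\forall k^0\varphi_0(x,n,k)]\geq\mu\;\leftrightarrow\;\forall k^0\left(\PP(P_{x,n,k})\geq_\mathbb{R}\mu\right).
\]
So (2) becomes the purely numerical statement
\[
\forall m\,\exists n\,\forall k\left(\PP(P_{x,n,k})\geq 1-2^{-m}\right).
\]

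The second step extracts information from (1). For any $g^1$, every $\varpi$ with $\exists n\forall k\,\varphi_0(x,n,k,\varpi)$ satisfies $\exists l\,\exists n\leq l\,\forall k\leq g(n)\,\varphi_0(x,n,k,\varpi)$. Proposition \ref{pro:implicationOuter} then gives $\PP[\exists l\,\psi_g(l)]\geq 1$, where
\[
\psi_g(l):\equiv\exists n\leq l\,\forall k\leq g(n)\,\varphi_0(x,n,k).
\]
The formula $\psi_g(l)$ is quantifier-free modulo bounded quantifiers, monotone in $l$, and measurable with set $\bigcup_{n\leq l}\bigcap_{k\leq g(n)}P_{x,n,k}$. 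Item (2) of Proposition \ref{pro:furtherPrenex} (the $\UBF{0}$ part) therefore gives
\[
\forall v\,\forall g\,\exists N\left(\PP\Bigl(\bigcap_{n\leq N}\bigcup_{k\leq g(n)}P^c_{x,n,k}\Bigr)\leq_\mathbb{R} 2^{-v}\right).
\]
By $\QFAC$/$\DC$ (after replacing $<_\mathbb{R}$ by a rational bound) this holds uniformly via a functional $N(g)$.

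The third step, which I expect to be the main obstacle, is to choose events $A_j$ in Lemma \ref{lem:ADRCombinatorial} so that two things hold. First, $\bigcap_{l\leq N(g)}\bigcup_{j=l}^{g(l)}A_j$ must be contained in the small set above. Second, $\PP(A_i)<2^{-u}$ must yield $\forall k\,\PP(P_{x,i,k})\geq 1-2^{-m}$ for a suitable $u$. My first attempt is to argue by contradiction. If (2) fails for $m$, then classically for every $n$ there is a $k$ with $\PP(P_{x,n,k})<1-2^{-m}$, and $\QFAC$ gives a function $h(n)$ selecting such $k$; anti-monotonicity in $k$ lets me take $h$ monotone. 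I would then set $A_j:=P^c_{x,j,h(j)}$ and use monotonicity in $n$ and anti-monotonicity in $k$ to compare $\bigcup_{j=l}^{g(l)}A_j$ with $\bigcup_{k\leq g'(l)}P^c_{x,l,k}$ for a $g'$ built from $g$ and $h$. If this comparison goes in the wrong direction, I would instead index the $A_j$ by pairs $(n,k)$ in a diagonal enumeration, in the spirit of Avigad--Dean--Rute's treatment of fluctuations. In either case, applying Lemma \ref{lem:ADRCombinatorial} with $v>u=m$ then gives an $i$ with $\PP(P^c_{x,i,h(i)})<2^{-m}$, contradicting the choice of $h$. Getting the index bookkeeping right between $g$, $h$ and the lemma's $\bigcap\bigcup$ shape is the delicate part, and the monotonicity hypotheses are exactly what should make it go through.
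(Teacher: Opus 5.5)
Your proposal is correct and follows essentially the same route as the paper's proof. The paper likewise gets $(2)\Rightarrow(1)$ from Proposition \ref{pro:initialPrenex}. For the converse it reduces (1) to a uniform functional $N(g)$ via $\UBF{0}$ and choice, then argues by contradiction with $A_j:=P^c_{x,j,h(j)}$ and Lemma \ref{lem:ADRCombinatorial}; your monotone $h$ plays the role of the paper's $\widetilde{g_0}$.

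The comparison you were worried about does go the right way. For $l\le j\le g(l)$ one has
\[
P^c_{x,j,h(j)}\subseteq P^c_{x,l,h(j)}\subseteq P^c_{x,l,h(g(l))}\subseteq\bigcup_{k\le h(g(l))}P^c_{x,l,k}.
\]
So with $g'=h\circ g$, feeding the functional $g\mapsto N(h\circ g)$ into the lemma with $u=m$ and $v=m+1$ finishes the argument, and the diagonal-enumeration fallback is unnecessary.
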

\begin{proof}
By Proposition \ref{pro:initialPrenex}, (2) implies (1). To see the converse, consider (1), which by $\QFAC$ is equivalent to $\forall x^0\left(\PP[\forall g^1\exists n^0\,\varphi_0(x,n,g(n))]\geq 1\right)$ and so in particular, by Proposition \ref{pro:initialPrenex}, implies
\[
\forall x^0\forall g^1\left(\PP[\exists n^0\varphi_0(x,n,g(n))]\geq 1\right).
\]
By $\UBF{0}$, we get
\[
\forall m^0\forall x^0\forall g^1\exists n^0\left(\PP[\exists i\leq_0 n\,\varphi_0(x,i,g(i))]\geq 1-2^{-m}\right).
\]
As $\varphi_0$ is measurable by \eqref{measAx}, so is $\exists i\leq_0 n\,\varphi_0(x,i,g(i))$ and so we get
\[
\forall m^0\forall x^0\forall g^1\exists n^0\left(\PP\left(\bigcup_{i=0}^{n} P_{x,i,g(i)}\right)\geq_\mathbb{R} 1-2^{-m}\right)
\]
by Proposition \ref{pro:measurable}. Written in its contrapositive, we get we get
\[
\forall m^0\forall x^0\forall g^1\exists n^0\left(\PP\left(\bigcap_{i=0}^{n}P^c_{x,i,g(i)}\right)\leq_\mathbb{R} 2^{-m}\right).
\]
Hence, using $\Pi^0_1\mbox{-}\mathsf{AC}$, we get that there exists a function $N$ with
\[
\forall m^0\forall x^0\forall g^1\left(\PP\left(\bigcap_{i=0}^{N_{m,x}(g)} P^c_{x,i,g(i)}\right)\leq_\mathbb{R} 2^{-m}\right).
\tag{$\dagger$}\label{keyStepAvigad}
\]
Now consider (2). By Proposition \ref{pro:furtherPrenex} and using $\UBOmega{0}$, this is equivalent to
\[
\forall m^0\forall x^0\exists n^0\forall k^0\left(\PP[\varphi_0(x,n,k)]\geq 1-2^{-m}\right).
\]
For a contradiction, suppose that (2) fails, so that in particular, also using \eqref{measAx} and Proposition \ref{pro:measurable}, there are $m_0$ and $x_0$ such that $\forall n^0\exists k^0\left(\PP(P^c_{x_0,n,k})>_\mathbb{R} 2^{-m_0}\right)$. Using $\Pi^0_1\mbox{-}\mathsf{AC}$, we get a function $g_0$ such that
\[
\forall n^0\left(\PP(P^c_{x_0,n,g_0(n)})>_\RR 2^{-m_0}\right).
\]
Now, define $A_i:=P^c_{x_0,i,g_0(i)}$. Then, using the monotonicity of $\varphi_0$, and hence of $P^c$, we have
\[
\bigcup_{j=l}^{g(l)}A_j = \bigcup_{j=l}^{g(l)}P^c_{x_0,j,g_0(j)}\subseteq P^c_{x_0,l,\widetilde{g_0}(g(l))}
\]
for all $l$ and $g$, where $\widetilde{g_0}(i):=\max_{j\leq i}g_0(j)$. In particular, this implies
\[
\forall g^1\left( \bigcap_{l=0}^{N'_{m_0+1,x_0}(g)}\bigcup_{j=l}^{g(l)}A_j \subseteq \bigcap_{l=0}^{N'_{m_0+1,x_0}(g)}P^c_{x_0,l,\widetilde{g_0}(g(l))}\right),
\]
where $N'_{m_0+1,x_0}(g):=N_{m_0+1,x_0}(\lambda n.\widetilde{g_0}(g(n)))$. By \eqref{keyStepAvigad} and the monotonicity of $\PP$, we get
\[
\forall g^1\left( \PP\left(\bigcap_{l=0}^{N'_{m_0+1,x_0}(g)}\bigcup_{j=l}^{g(l)}A_j\right) \leq_\RR 2^{-(m_0+1)}\right),
\]
so that Lemma \ref{lem:ADRCombinatorial} implies that there exists an $i$ such that $\PP(P^c_{x_0,i,g_0(i)})=_\RR\PP(A_i) <_\RR 2^{-m_0}$, which is a contradiction.
\end{proof}

Note by inspection of the above proof that we actually have established that the implication
\[
\begin{cases}\forall m^0\forall x^0\forall g^1\exists n^0\left(\PP[\exists i\leq_0 n\,\varphi_0(x,i,g(i))]\geq 1-2^{-m}\right)\\
\qquad\to \forall m^0\forall x^0\exists n^0\forall k^0\left(\PP[\varphi_0(x,n,k)]\geq 1-2^{-m}\right)\end{cases}
\]
already holds over $\mathcal{F}^\omega[\PP]$, that is without any uniform boundedness principles. This is essentially what was established in \cite{NeriPischke2025} for the special case of a convergence statement and what was presented in a quantitative form in the paper \cite{AvigadDeanRute2012}. In particular, even this result crucially relies on $\Pi^0_1\mbox{-}\mathsf{AC}$, so that we can here only guarantee that there is a bar-recursive witness for the above implication (and in that way, essentially, mapping solutions of \ref{IntWeak} to solutions of \ref{IntUniform}) for measurable and suitably monotone $\Pi^0_1$-matrices. Moreover, such a witness, for this generalized result, can be explicitly constructed by more or less directly following the arguments of \cite{AvigadDeanRute2012}, although we do not explore this here any further. We know of no formal relationship beyond $\Pi^0_1$-matrices.

Moving to the other branch of the picture, we know a bit less. Recall that for quantifier-free and anti-monotone formulas $\psi(x^0,\varpi^\Omega)$ (not necessarily measurable), we provably have that $\PP[\forall x^0\psi(x)]\geq \lambda$ is equivalent to $\forall x^0\PP[\psi(x)]\geq\lambda$, which (in the classical context) even extends to $x$ of higher type (recall Propositions \ref{pro:furtherPrenex} and \ref{pro:evenFurtherPrenexConst}). 

The question on the formal relationship between \ref{problem} and \ref{strongProblem} thereby reduces to the question if, and how, this formal equivalence extends to formulas which are not decidable anymore but have the next highest complexity, that is $\Sigma^0_1$-matrices. The key result in that direction is the following, relating the respective interpretations \ref{IntWeak} and \ref{IntStrongConst} (and by that also \ref{IntStrongClass}) with each other.

\begin{lemma}\label{lem:equivRemark}
Let $\varphi_0(x^0,n^0,\varpi^\Omega)$ be measurable and quantifier-free. Over $\mathcal{F}^\omega[\PP] +(\sigma\text{-}\textsf{additivity})$, we have the following implication: For any functional $\Phi(m,x)$ with
\[
\forall m^0\forall x^0\left(\PP[\exists n\leq_0\Phi(m,x)\, \varphi_0(x,n)] \geq 1-2^{-m}\right),
\]
the functional $\Phi'(m,x):=\Phi(m+x+1,x)$ satisfies
\[
\forall m^0\left(\PP[\forall {x}^0\exists n\leq_0\Phi'(m,x)\, \varphi_0(x,n)] \geq 1-2^{-m}\right).
\]
\end{lemma}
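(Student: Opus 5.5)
The plan is to reduce everything to the abstract sets $P_{x,n}$ provided by measurability \eqref{measAx} and then do the usual union-bound argument, using ($\sigma$-\textsf{additivity}) to pass to a countable union. Concretely, for each $m,x$ let $B_{m,x}:=\bigcup_{n=0}^{\Phi(m,x)}P_{x,n}$, a term of type $\Algebra$. Since $\exists n\leq_0\Phi(m,x)\,\varphi_0(x,n,\varpi)\leftrightarrow\varpi\in B_{m,x}$, Proposition \ref{pro:measurable} turns the hypothesis into $\PP(B_{m,x})\geq_\RR 1-2^{-m}$, i.e.\ $\PP(B^c_{m,x})\leq_\RR 2^{-m}$. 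Instantiating at $m+x+1$ gives, with $C_x:=B^c_{m+x+1,x}$ (which depends on $m$, but $m$ is fixed throughout), the estimate $\PP(C_x)\leq_\RR 2^{-(m+x+1)}$. Note also that $\forall x\,\exists n\leq_0\Phi'(m,x)\,\varphi_0(x,n,\varpi)$ holds iff $\varpi\notin C_x$ for all $x$, i.e.\ iff $\neg\exists x^0(\varpi\in C_x)$.

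Next I bound the outer content of the countable union $\exists x^0(\varpi\in C_x)$ from above. For any finite $l$, subadditivity of $\PP$ gives $\PP(\bigcup_{x=0}^l C_x)\leq_\RR\sum_{x=0}^l 2^{-(m+x+1)}<_\RR 2^{-m}$, so the disjointified partial sums $\sum_{n=0}^l\PP((C\!\uparrow)_n)$ are all $\leq 2^{-m}$. The second conjunct of ($\sigma$-\textsf{additivity}) then yields, for every $k$, some $l$ with $\PP[\exists x^0\,C_x]\leq\sum_{n=0}^l\PP((C\!\uparrow)_n)+2^{-k}\leq 2^{-m}+2^{-k}$. Since this holds for all $k$, I conclude $\PP[\exists x^0\,C_x]\leq 2^{-m}$; this step needs a small argument that $\PP[\psi]\leq\lambda+2^{-k}$ for all $k$ gives $\PP[\psi]\leq\lambda$, which follows from the characterization in Proposition \ref{pro:duality}: if $\PP(A)>_\RR\lambda$ then already $\PP(A)>_\RR\lambda+2^{-k}$ for some $k$.

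Finally, I dualize. By \eqref{def-P-lt}, $\PP[\exists x\,(\varpi\in C_x)]\leq 2^{-m}$ means $\PP[\overline{\exists x(\varpi\in C_x)}]\geq 1-2^{-m}$, and the De Morgan dual is $\forall x\,(\varpi\notin C_x)$. By the description above, this formula is equivalent for every $\varpi$ to $\forall x\,\exists n\leq_0\Phi'(m,x)\,\varphi_0(x,n,\varpi)$. Proposition \ref{pro:implicationOuter}, applied with null set $\emptyset$, then transfers the lower bound to give the claimed $\PP[\forall x^0\exists n\leq_0\Phi'(m,x)\,\varphi_0(x,n)]\geq 1-2^{-m}$.

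The main obstacle I expect is the bookkeeping in the middle step. First, the ($\sigma$-\textsf{additivity}) axiom is phrased via the outer content $\PP[\exists n^0 A_n]$ of a sequence $A^{\Algebra(0)}$, so I must form the sequence $\lambda x.C_x$ explicitly as a term; this is possible since $\Phi$ is a given functional and finite unions are definable by recursion. Second, I must check that the partial sums of $\PP((C\!\uparrow)_n)$ coincide with $\PP(\bigcup_{x\leq l}C_x)$, which the paper already notes for the operation $\cdot\!\uparrow$. The remaining concern is that the strict bound $<2^{-m}$ gets lost, which is harmless since only $\leq$ is needed.
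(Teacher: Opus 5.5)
Your proposal is correct and follows essentially the same route as the paper's proof. Both fix $m$ and bound the outer content of $\exists x^0\,\forall n\leq_0\Phi(m+x+1,x)\,\neg\varphi_0(x,n)$ by $\sum_{x=0}^l 2^{-(m+x+1)}+2^{-k}\leq 2^{-m}+2^{-k}$ via ($\sigma$-\textsf{additivity}), then let $k$ be arbitrary and dualize. Your explicit treatment of the disjointified partial sums, and of the passage from $\leq 2^{-m}+2^{-k}$ for all $k$ to $\leq 2^{-m}$ via Proposition \ref{pro:duality}, fills in details that the paper handles as an abuse of notation.
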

\begin{proof}
Fix $m^0$. Note that $\forall n\leq_0\Phi(m+x+1,x)\, \neg\varphi_0(x,n)$ is provably measurable in $\mathcal{F}^\omega[\PP]$, and (as an abuse of notation) we in the following simply write the above expression for that set. Using $(\sigma\text{-}\textsf{additivity})$, for any $k^0$ there exists an $l^0$ such that (again, slightly abusing notation):
\begin{align*}
\PP\left[\exists {x}^0\forall n\leq_0\Phi(m+x+1,x)\, \neg\varphi_0(x,n)\right]&\leq \sum_{x=0}^l\PP\left(\forall n\leq_0\Phi(m+x+1,x)\, \neg\varphi_0(x,n)\right)+2^{-k}\\
&\leq_\RR \sum_{x=0}^l 2^{-m-(x+1)}+2^{-k}\leq_\RR 2^{-m}+2^{-k}.
\end{align*}
Since $k^0$ was arbitrary, we have
\[
\PP\left[\exists {x}^0\forall n\leq_0\Phi(m+x+1,x)\, \neg\varphi_0(x,n)\right]\leq 2^{-m}
\]
which yields the result using duality.
\end{proof}

Note that through Proposition \ref{Prop:elim:sigma}, the above result in particular holds over $\mathcal{F}^\omega[\PP] +\UBOmega{0}$ and so can be used in the context of bound extraction theorems without affecting extracted bounds. Regardless of that, we want to further emphasize that Lemma \ref{lem:equivRemark} is actually mathematically true for probability spaces $(\Omega,\Algebra,\PP)$.

Further, as statements of the form \ref{problem} can be formally related to their quantitative variants \ref{IntWeak} using uniform boundedness, we obtain the following equivalence:

\begin{proposition}
Let $\varphi_0(x^0,n^0,\varpi^\Omega)$ be measurable and quantifier-free. Over $\mathcal{F}^\omega[\PP]+\UBOmega{0}+\UBF{0}$, the following are equivalent:
\begin{enumerate}
\item $\PP[\forall x^0\exists n^0\varphi_0(x,n)]\geq 1$,
\item $\forall x^0\left(\PP[\exists n^0\varphi_0(x,n)]\geq 1\right)$.
\end{enumerate}
\end{proposition}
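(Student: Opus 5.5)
The direction (1) $\Rightarrow$ (2) needs no new ideas: it is item (1) of Proposition \ref{pro:initialPrenex}, instantiated with the formula $\exists n^0\varphi_0(x,n,\varpi)$ and $\lambda=1$. That proposition already holds over $\mathcal{F}^\omega_i[\PP]$, so it certainly holds over the classical system with uniform boundedness. All the work is in (2) $\Rightarrow$ (1). The plan is to go from (2) to its quantitative reading \ref{IntWeak} \emph{inside} the theory, with $\Phi$ merely asserted to exist. Then Lemma \ref{lem:equivRemark} turns that into the uniform statement, and monotonicity of the outer content lets the bound be dropped.

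\textbf{Step 1: internal quantitative reading.} Fix $x^0$ and assume $\PP[\exists n^0\varphi_0(x,n)]\geq 1$. The formula $\varphi_0$ is quantifier-free, so $\exists i\leq_0 n\,\varphi_0(x,i)$ is both existential and monotone in $n$. Part (2) of Proposition \ref{pro:furtherPrenex} (the $\UBF{0}$ part) therefore gives
\[
\forall m^0\exists n^0\left(\PP[\exists i\leq_0 n\,\varphi_0(x,i)]> 1-2^{-m}\right).
\]
Here $\exists n^0\exists i\leq_0 n\,\varphi_0(x,i)$ is rewritten as $\exists n^0\varphi_0(x,n)$ via Proposition \ref{pro:implicationOuter} in both directions. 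By measurability \eqref{measAx}, the set $\bigcup_{i\leq n}P_{x,i}$ is a term of type $\Algebra$. Proposition \ref{pro:measurable} then turns the matrix into $\PP(\bigcup_{i\le n}P_{x,i})>_\RR 1-2^{-m}$, which is a $\Sigma^0_1$ statement in the relevant parameters.

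\textbf{Step 2: choice.} Apply $\QFAC$ to this $\forall m\forall x\exists n$-statement with existential matrix; the existential real inequality is absorbed into the witness, as is standard. This yields a functional $\Phi^{0(0)(0)}$ such that
\[
\forall m^0\forall x^0\left(\PP[\exists n\leq_0\Phi(m,x)\,\varphi_0(x,n)]\geq 1-2^{-m}\right).
\]
Going back from sets to the outer content uses Proposition \ref{pro:measurable} again. This is exactly the hypothesis of Lemma \ref{lem:equivRemark}.

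\textbf{Step 3: glue.} Proposition \ref{Prop:elim:sigma} shows that $\mathcal{F}^\omega[\PP]+\UBOmega{0}$ proves ($\sigma$-\textsf{additivity}), so Lemma \ref{lem:equivRemark} is available in our system. It gives, for every $m$,
\[
\PP[\forall x^0\exists n\leq_0\Phi'(m,x)\,\varphi_0(x,n)]\geq 1-2^{-m}.
\]
The pointwise implication $\forall x\exists n\le\Phi'(m,x)\varphi_0 \to \forall x\exists n\varphi_0$ combined with Proposition \ref{pro:implicationOuter} (with $A_0=\emptyset$) gives $\PP[\forall x\exists n\varphi_0(x,n)]\geq 1-2^{-m}$ for all $m$. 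Finally, $\PP[\psi]\geq 1$ is implied by $\forall m\,(\PP[\psi]\geq 1-2^{-m})$: if $\PP(A)<_\RR 1$, then $\PP(A)<1-2^{-m}$ for some $m$, and the definition \eqref{def-P-gt} applies directly.

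\textbf{Expected obstacle.} The delicate point is Step 1. The prenexation of Proposition \ref{pro:furtherPrenex}(2) must be applied to an existential, monotone formula with the quantifier of type $0$, and the strict inequality with error $\mu$ has to be converted into the $2^{-m}$ form. One must also check that only quantifier-free choice is needed in Step 2, which requires the matrix to be measurable so that it reduces to a real inequality between terms. Measurability is exactly what makes Lemma \ref{lem:equivRemark} applicable, so the two uses of \eqref{measAx} should be coordinated carefully.
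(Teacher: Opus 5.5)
Your proof is correct and follows the paper's argument. Both obtain a bounding functional $\Phi$ from (2) via $\UBF{0}$, apply Lemma \ref{lem:equivRemark} (available through Proposition \ref{Prop:elim:sigma}), and then drop the bound and let $m$ vary. The one difference is how $\Phi$ is produced: the paper applies $\UBF{0}$ directly to $\forall m^0\forall x^0\forall A^\Algebra\exists n^0(\PP(A)\leq_\RR 1-2^{-m}\to\exists\varpi\in A^c\,\varphi_0(x,n,\varpi))$ with $(m,x)$ as the type-$0$ parameter, so the bound is already a function of $m,x$; you instead prenex pointwise via Proposition \ref{pro:furtherPrenex}(2) and then assemble $\Phi$ with $\QFAC$, which needs one extra, harmless appeal to measurability to make the matrix $\Sigma^0_1$.
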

\begin{proof}
By Proposition \ref{pro:initialPrenex}, we only have to establish the direction from (2) to (1). We first show that over $\mathcal{F}^\omega[\PP]+\UBF{0}$, (2) is equivalent to
\[
\exists\Phi^{0(0)(0)}\forall m^0\forall {x}^0\left(\PP[\exists n\leq_0\Phi(m,x)\,\varphi_0(x,n)]\geq 1-2^{-m}\right).
\]
To see this, note that $\forall x^0\left(\PP[\exists n^0\varphi_0(x,n)]\geq 1\right)$ is equivalent to
\[
\forall m^0\forall x^0\forall A^S\exists n^0\left(\PP(A)\leq_\mathbb{R} 1-2^{-m}\to\exists\varpi\in A^c\,\varphi_0(x,n,\varpi)\right).
\]
Using $\UBF{0}$, we get that 
\[
\exists\Phi^{0(0)(0)}\forall m^0\forall x^0\forall A^S\exists n\leq_0\Phi(m,x)\left(\PP(A)\leq_\mathbb{R} 1-2^{-m}\to\exists\varpi\in A^c\,\varphi_0(x,n,\varpi)\right)
\]
which amounts to the above. Hence, over $\mathcal{F}^\omega[\PP]+\UBOmega{0}+\UBF{0}$, we can now proceed as follows: Using Lemma \ref{lem:equivRemark} (and Proposition \ref{Prop:elim:sigma}), (2) now implies
\[
\exists{\Phi'}^{0(0)(0)}\forall m^0\left(\PP[\forall {x}^0\exists n\leq_0\Phi'(m,x)\, \varphi_0(x,n)] \geq 1-2^{-m}\right).
\]
Dropping the quantitative information, we obtain $\forall m^0\left(\PP[\forall {x}^0\exists n^0\, \varphi_0(x,n)] \geq 1-2^{-m}\right)$, which in turn yields (1).
\end{proof}

Based on the use of the above mathematical result from Lemma \ref{lem:equivRemark}, we are here restricted to quantifiers of type $0$. It would be very interesting to see whether the above can be extended to e.g.\ type $1$ using continuity arguments, but we leave the investigation of this for future work.

In general, we see already that the formula structure and its interaction with $\PP$ has a subtle impact on the computational strength of a statement, and that realizing the previously mentioned mathematical equivalences over measures can be far from computationally trivial. However, perhaps surprisingly, in the context of proof mining practice, these issues rarely surface, if at all. Indeed, as will be discussed in detail in the following Section \ref{sec:examples}, essentially all statements ever considered in the context of proof mining case studies are naturally formalized via statements of the form \ref{problem} or \ref{uniformProblem}, by virtue of most proofs of theorems of the form \eqref{originalProblem} actually being of a sufficiently ``local'' nature that keeps universal parameters fixed throughout the argument.

As such, essentially all moduli ever considered in the context of proof mining practice have been of the form \ref{IntWeak} or \ref{IntUniform}. That means that neither are these stronger functionals extracted in case studies nor are they commonly required to resolve an implication where a statement like \ref{strongProblem} features in the premise. Indeed, only one recent case study \cite{NeriPischkePowell2026} involved such a modulus, as will be discussed in detail later.

In any case, if such formulations of probabilistic theorems will be required in the future, the present section provided the necessary logical tools and remarks on their relation to deal with them in practice.

\section{Examples from mathematical practice}\label{sec:examples}

We now illustrate the rather wide applicability of our probabilistic logical metatheorems by giving a range of examples from mathematical practice where they yield exactly the type of computational content expected and observed in applications of proof mining to probability theory. We focus on some key types of theorems, covering central examples. Nevertheless, also essentially all other (perhaps more minor) quantitative notions considered in the context of proof mining case studies in probability can similarly be systematically justified from a proof-theoretic perspective via the present paper, including in particular notions like moduli of almost sure finiteness or tightness as considered in \cite{NeriPowell2025}, moduli of absolute continuity as considered in \cite{PischkePowell2024} and the notion and related theory of finitary martingales developed in \cite{NeriPischkePowell2026}. It should be noted that for all the quantitative notions discussed in this section, the present paper in particular illustrates for the first time that these notions arise naturally through a systematic proof-theoretic approach towards assigning computational content to (classical or semi-constructive) almost sure statements, and in that way provides the first formal proof-theoretic justification for their extraction in the various case studies mentioned throughout. In this section, if not stated otherwise, all mathematical results are formulated over a respective probability content space $(\Omega,\Algebra,\PP)$.

\subsection{$A_n$ infinitely often almost surely} 

Let $(A_n)_{n \in \NN}$ be a sequence of events. The fact that $\varpi$ belongs to the event $\limsup_{n \to \infty} {A_n} := \bigcap_{k\in\mathbb{N}} \bigcup_{n \geq k} A_n$ can be expressed via the formula
\[
\forall k^0 \exists n^0 \exists i^0 \in [k; n] (\varpi \in A_i).
\]
We intentionally introduced the bounded quantification $\exists i^0 \in [k; n]$ here in order to make the formula $\exists i^0 \in [k; n] (\varpi \in A_i)$ monotone in $n$ and anti-monotone in $k$. The statement that \emph{$A_n$ occurs infinitely often almost surely}, that is $\PP(\limsup_{n \to \infty} {A_n})=1$, can now be represented formally in different ways in our setting. 

The representation with the weakest computational interpretation is given by the statement
\[
\forall k^0 \left( \PP[\exists n^0 \exists i^0 \in [k; n] \, A_i] \geq 1 \right),
\]
that is a statement of the form \ref{problem}. By Theorems \ref{thrm:pulloutpi2} and \ref{thrm:pulloutpi2-const}, from a (classical or semi-constructive) proof of this, we are be able to extract a modulus $\Phi$ such that
\[
\forall m\in\mathbb{N} \forall k\in\mathbb{N} \left(\PP\left(\bigcup_{i \in [k;\Phi(m, k)]} A_i\right) \geq 1 - 2^{-m}\right).
\]
We call such $\Phi$ a \emph{pointwise modulus of $A_n$ infinitely often almost surely}. Such a modulus has been used in \cite[Theorem 2.2]{ArthanOliva2021} in connection with the quantitative interpretation of the second Borel-Cantelli lemma or in the context of stochastic optimization \cite{NeriPischkePowell2025a,NeriPischkePowell2026,NeriPowell2024}.  Moreover, note that since $\exists i^0 \in [k; n] (\varpi \in A_i)$ is anti-monotone in $k$, this just coincides with the quantitative interpretation of the statement 
\[
\forall m^0\forall k^0 \exists n^0 \left( \PP[\exists i^0 \in [k; n] \, A_i] \geq 1-2^{-m} \right),
\]
that is a corresponding statement of the form \ref{uniformProblem}.

Now, if we instead were to consider a representation via 
\[
\PP[\forall k^0 \exists n^0 \exists i^0 \in [k; n] \, A_i] \geq 1,
\]
that is a statement of the form \ref{strongProblem}, then we get quite different computational interpretations based on whether we are in a classical or semi-constructive context, as illustrated in the previous Section \ref{sec:meta} (recall Theorems \ref{thrm:pulloutpi2Strong} and \ref{thrm:pulloutpi2-constStrong}). In particular, in a semi-constructive context, as illustrated in Theorem \ref{thrm:pulloutpi2-constStrong}, the resulting computational interpretation is given via a functional $\Phi$ satisfying
\[
\forall m\in\mathbb{N} \left(
\PP\left[\bigcap_{k\in\mathbb{N}} \bigcup_{i \in [k; \Phi(m, k)]} \, A_i\right] \geq 1 - 2^{-m} \right).
\]
Compared to the above modulus, which is of a more pointwise nature, we will call such a modulus $\Phi$ a \emph{uniform modulus of $A_n$ infinitely often almost surely}, which by the aforementioned theorem can be extracted from large classes of semi-constructive proofs of the above property. Such a modulus seems to be needed only rarely. Indeed, there is so far only one case study that actually relied on a modulus with this particularly strong property, given recently in \cite{NeriPischkePowell2026} (for a specific choice of sets $A_i$ arising in the context of stochastic optimization, see Definition 4.3 therein, that we do not discuss here any further). Indeed, therein we find a critical mismatch between two parts of a proof usually bridged by corresponding limit theorems for the measure, with the first part of the proof naturally establishing a theorem of the form $\forall k^0 \left( \PP[\exists n^0 \exists i^0 \in [k; n] \, A_i] \geq 1 \right)$ and the latter part of the proof crucially relying on formulating this theorem in the form of $\PP[\forall k^0 \exists n^0 \exists i^0 \in [k; n] \, A_i] \geq 1$. Even though there generally might be no immediate quantitative relation between statements of the form \ref{problem} and \ref{strongProblem} as discussed in the previous Section \ref{sec:meta}, as the types here were low enough, this gap in the particular case study \cite{NeriPischkePowell2026} could be bridged by Lemma \ref{lem:equivRemark}, by which, over a measure space, a solution to the latter is definable from a solution to the former.

We know of no other case where such an argument was ever needed. Ultimately, this might also be the reason why the classical interpretation of the above as given in Theorem \ref{thrm:pulloutpi2Strong}, which is actually represented via a functional of type 3, might never have been observed in mathematical practice before, as in cases such as the above it is simply subsumed by this stronger property over measure spaces.

\subsection{$A_n$ infinitely often almost never} 

Dually to the above, we can also consider the statement that \emph{$A_n$ occurs infinitely often almost never}, that is $\PP(\limsup_{n\to\infty}A_n)=0$. By following the above approach, we can represent this statement via duality through
\[
\PP[\forall k^0 \exists n^0 \exists i^0 \in [k; n] \, A_i] \leq 0 \;\; \equiv \;\; \PP[\exists k^0 \forall n^0 \forall i^0\in [k; n] \, A^c_i]\geq 1.
\]
As the inner matrix of the outer measure above is $\Sigma^0_2$, it will make a difference whether this statement is considered in a classical or a semi-constructive context.

In a semi-constructive context, as illustrated by Theorem \ref{thrm:pulloutpi2-const}, the computational content of the above statement is given by a function $\Psi$ such that
\[
\forall m\in\mathbb{N} \left( \PP\left[\bigcup_{i \geq \Psi(m)} A_i \right] \leq 2^{-m} \right).
\]
Over measure spaces, rewriting the infinite union as a sequence of finite unions, we in particular get (recall Remark \ref{rem:prenex}) that the above is equivalent to 
\[
\forall m\in\mathbb{N} \forall n\in\mathbb{N} \left( \PP\left(\bigcup_{i\in [\Psi(m); n]} A_i \right) \leq 2^{-m} \right),
\]
which has the benefit of being well-defined and suitable also for contents. We will call such a $\Psi$ a \emph{modulus of $A_n$ infinitely often almost never}. This modulus has been used in \cite[Theorem 2.1]{ArthanOliva2021} in connection with the quantitative interpretation of the first Borel-Cantelli lemma, but generally, by the aforementioned theorem, can be extracted from large classes of semi-constructive proofs of the above property.

In a classical context, however, one would have to first transform this statement into a $\forall\exists$-form in the style of the negative translation of the respective statement. This turns $\exists k^0 \forall n^0 \forall i^0 \in [k; n] (\varpi\in A^c_i)$ into $\forall g^1 \exists k^0 \forall i^0 \in [k; g(k)] (\varpi\in A^c_i)$. Therefore, there are various options for representing the resulting probabilistic statement formally. Here, we now just focus on the representation via
\[
\forall m^0\forall g^1 \exists k^0
\left( \PP[ \forall i^0 \in [k; g(k)] \, A_i^c] \geq 1-2^{-m} \right), 
\]
that is in the form of \ref{uniformProblem}, as the other two immediate representations of the associated property, that is $\forall g^1 \left(\PP[\exists k^0 \forall i^0 \in [k; g(k)] \, A_i^c] \geq 1\right)$ in the form of \ref{problem} and $\PP[\forall g^1\exists k^0 \forall i^0 \in [k; g(k)] \, A_i^c] \geq 1$ in the form of \ref{strongProblem}, both classically and constructively, have never been observed in practice. Note that here, the above property is not even mathematically equivalent to the formulation given by $\forall g^1 \left(\PP[\exists k^0 \forall i^0 \in [k; g(k)] \, A_i^c] \geq 1\right)$ as the inner statement, by the presence of the term $g(k)$, is not monotone in $k$.

Now, by Theorem \ref{thrm:pulloutpi2}, from a classical proof of the above, we would be guaranteed to obtain a modulus $\Psi$ such that
\[
\forall m\in\mathbb{N} \forall g:\mathbb{N}\to\mathbb{N} \exists k \leq \Psi(m, g)\left( \PP\left( \bigcup_{i \in [k; g(k)]} A_i \right) \leq 2^{-m} \right).
\]
Such a modulus, which we call a \emph{uniform metastable modulus of $A_n$ infinitely often almost never}, features prominently in the recent work by Powell and Wan on quantitative aspects of zero-one laws in probability theory \cite{PowellWan2025}, but generally, by the aforementioned theorem, can be extracted from large classes of classical proofs of the above property.

Also, such a modulus could have already featured in the interpretation of the first Borel-Cantelli lemma as given by Arthan and the second author in \cite{ArthanOliva2021}, if the result would have been analysed classically instead of constructively. To make this more clear, recall that the first Borel-Cantelli lemma states that for a sequence of events $(A_n)_{n\in\mathbb{N}}$ in a probability space $(\Omega,\Algebra,\PP)$, if $\sum_{n\in\mathbb{N}}\PP(A_n)<\infty$, then $\PP(\limsup_{n\to\infty}A_n)=0$. The paper \cite{ArthanOliva2021} presents a constructive interpretation of this implication by extracting from its (constructive) proof a transformation that maps any \emph{rate of convergence} $\phi(m)$ realizing the premise, that is such that
\[
\forall m\in\mathbb{N}\left(\sum_{n\geq\phi(m)}\PP(A_n)\leq 2^{-m}\right),
\]
into a modulus of $A_n$ infinitely often almost never $\Psi$ as defined above.

If we, however, were to analyse the proof classically, that is by first considering the negative translations, we are actually able to provide a transformation that maps any so-called rate of metastability $\phi(m,g)$ realizing the premise, a weaker computational interpretation given by 
\[
\forall m\in\mathbb{N}\forall g:\mathbb{N}\to\mathbb{N}\exists k\leq\phi(m,g)\left(\sum_{n=k}^{g(k)}\PP(A_n)\leq 2^{-m}\right),
\]
into a uniform metastable modulus of $A_n$ infinitely often almost never $\Psi$ as defined above. Indeed, the argument is essentially trivial. Simply note that 
\[
\PP\left( \bigcup_{n \in [k; g(k)]} A_n \right)\leq \sum_{n=k}^{g(k)}\PP(A_n)
\]
holds using (finite) subadditivity, so that one can immediately set $\Psi=\phi$. Note that this result actually has the benefit that it immediately and elementarily, that is without any additional proof-specific arguments, implies the previous result on transformations of rates of convergence to moduli of $A_n$ infinitely often almost never: If $\phi$ is a rate of convergence for $\sum_{n\in\mathbb{N}}\PP(A_n)<\infty$, it is also a rate of metastability for $\sum_{n\in\mathbb{N}}\PP(A_n)<\infty$, so that we obtain 
\[
\forall m\in\mathbb{N}\forall g:\mathbb{N}\to\mathbb{N}\exists k\leq\phi(m)\left(\sum_{n=k}^{g(k)}\PP(A_n)\leq 2^{-m}\right).
\]
A simple argument via contradiction shows that this is elementarily equivalent to $\phi$ being a modulus of $A_n$ infinitely often almost never, as constructed in \cite{ArthanOliva2021}.\footnote{While such an argument may seem somewhat elaborate for a result with such a trivial proof, we nonetheless presented it in some detail above in order to emphasize a recurring theme in proof mining: The classical analysis of a constructive proof often yields a more general computational result that, in particular, contains the constructive interpretation as a special case.}

\subsection{Almost sure convergence}\label{sec:ASC}

Let $(X_n)_{n \in \NN}$ be a sequence of random variables. One of the most fundamental statements about such a stochastic process is that these variables converge almost surely, where we here focus on the corresponding Cauchy formulation. That $(X_n(\varpi))_{n\in\mathbb{N}}$ is Cauchy can be expressed via the formula
\[
\forall k^0\exists n^0\forall l^0 \forall i^0,j^0\in [n;n+l]\left( \vert X_{i}(\varpi)-X_j(\varpi)\vert \leq_\mathbb{R} 2^{-k} \right).
\]
We are now interested in different formal representations of the statement that \emph{$X_n$ converges almost surely}, together with the computational content that they entail.

Indeed, such convergence results so far are the most common type of theorems analysed in proof mining case studies, as aside from various infinitely often statements already discussed in the previous parts of this section, essentially all other major quantitative results in proof mining and probability theory so far focus on such a probabilistic convergence theorem in one way or the other.

Similarly to the previous case, with events occurring infinitely often almost never, we are here confronted with a $\Pi^0_3$-property and so it will generally make a difference whether the statement is proven constructively or not, in addition to the different interpretations already induced by the concrete formalizations of the associated properties in the style of \ref{strongProblem} -- \ref{uniformProblem}.

Let us again begin with the formulation with the weakest computational content in the sense of Section \ref{sec:meta}, that is with
\[
\forall k^0\left(\PP[\exists n^0\forall l^0 \forall i^0,j^0\in [n;n+l]\left( \vert X_{i}-X_j\vert \leq_\mathbb{R} 2^{-k} \right)]\geq 1\right).
\]
In a semi-constructive context, as illustrated by Theorem \ref{thrm:pulloutpi2-const}, the computational content of the above statement is given by a modulus $\Phi:\mathbb{N}^2\to\mathbb{N}$ such that\footnote{Note that such a bound would also depend on a majorant of the sequence of random variables $(X_n)$. In the context of the system $\mathcal{F}^\omega[\PP]$, random variables are represented as objects of type $1(\Omega)$ and hence, due to the uniform majorizability of the type $\Omega$, are all assumed to be bounded (see \cite{NeriPischke2025} for further discussions of this). In that way, all extracted moduli concerned with (sequences) of random variables actually retain such a dependence on a (sequence of) almost-sure bounds for the variable(s). This creates a mismatch with some applications, where it is possible to derive bounds which are even more uniform on the variables in the sense that they only depend on their $L_1$-norm or other measures related to integrability. These uniform bounds can be systematically accounted for via a slight extension of the systems presented in \cite{NeriPischke2025} and used here, based on the use of an additional abstract type for random variables and a point-free approach to the content space. While this will be fully explored in upcoming work with Thomas Powell, we want to highlight that this merely amounts to a modular extension of the present paper, tailored to a specific, more narrow notion of point-free proofs and corresponding uniform bounds, and in particular builds on the general translation developed in this paper. The resulting arguments for extracting such uniform bounds remain almost exactly the same as the ones given here, so that for the rest of this section, we reserve the right to be vague regarding the quantitative dependence of the bounds on the random variables, as we merely want to illustrate their structure. Regardless, we want to stress that the bounds are still guaranteed to be independent of all parameters relating to the underlying probability content space.}
\[
\forall m\in\mathbb{N} \forall k\in\mathbb{N}\exists n\leq\Phi(m,k)\forall l\in\mathbb{N}\left(\PP(\forall i,j\in [n;n+l]\left(\vert X_{i} - X_{j}\vert \le 2^{-k}\right))\geq 1-2^{-m}\right).
\]
We call such a modulus $\Phi$ a \emph{rate of almost sure convergence for $X_n$}, which, by the aforementioned theorem, can be extracted from large classes of semi-constructive proofs of the previous property. Such rates are abundant in the probability theory literature, and they feature crucially in recent case studies of proof mining in that area, such as laws of large numbers \cite{Neri2024,Neri2023,Neri2025a} and in particular stochastic optimization \cite{NeriPischkePowell2025a,Pischke2025,Pischke2026,PischkePowell2024}.  

In particular, as also highlighted before, these theorems will guarantee that the resulting rates will hold true for all (suitable) finitely additive probability spaces, which is a rather surprising result as in general, almost sure convergence does not imply the existence of a rate of almost sure convergence over finitely additive probability space.

In a classical context, one would as before first have to translate the respective statement into a $\forall\exists$-theorem in the style of the negative translation of the respective statement, turning the previous formulation of convergence into
\[
\forall k^0\forall g^1\exists n^0 \forall i^0,j^0\in [n;n+g(n)]\left( \vert X_{i}(\varpi)-X_j(\varpi)\vert \leq_\mathbb{R} 2^{-k} \right).
\]
Hence, there are again various options for representing the resulting probabilistic statement formally.

Let us first consider the statement 
\[
\forall k^0\forall g^1\PP[\exists n^0\forall i^0,j^0\in [n;n+g(n)]\left( \vert X_{i}-X_j\vert \leq_\mathbb{R} 2^{-k} \right)]\geq 1,
\]
that is a statement of the form \ref{problem}. By Theorem \ref{thrm:pulloutpi2}, from a proof of this we would be able to extract a modulus  $\Phi:\mathbb{N}^2\times \mathbb{N}^\mathbb{N}\to\mathbb{N}$ such that
\[
\forall m\in\mathbb{N} \forall k\in\mathbb{N} \forall g:\mathbb{N}\to\mathbb{N}\left(\PP[\exists n \le \Phi(m,k,g)\forall i,j\in [n;n+g(n)] ( \vert X_{i} - X_j\vert \le 2^{-k})]\geq 1-2^{-m}\right).
\]
We call such a modulus $\Phi$ a \emph{rate of metastable pointwise convergence for $X_n$}. This notion, as far as we know, first appeared in the work of Avigad, Dean and Rute \cite{AvigadDeanRute2012} (extending a notion of Tao \cite{Tao2008b}). In recent proof mining case studies, it appears prominently in \cite{NeriPischkePowell2026} for a quantitative version of a central convergence theorem in stochastic optimization.

The other immediate representation of the associated property is given by
\[
\forall m^0\forall k^0\forall g^1\exists n^0 \left(\PP[\forall i^0,j^0\in [n;n+g(n)]\left( \vert X_{i}-X_j\vert \leq_\mathbb{R} 2^{-k} \right)]\geq 1-2^{-m}\right)
\]
in the form of \ref{uniformProblem}. Note that also here, similar to before in the case of infinitely often almost never statements, the above property is not immediately mathematically equivalent to the formulation given by $\forall k^0\forall g^1\PP[\exists n^0\left( \vert X_{n+g(n)}(\varpi)-X_n(\varpi)\vert \leq_\mathbb{R} 2^{-k} \right)]\geq 1$ as the inner statement, by the presence of the term $g(n)$, is not monotone in $n$. Again, by Theorem \ref{thrm:pulloutpi2}, from a proof of this we would be able to extract a modulus $\Phi:\mathbb{N}^2\times \mathbb{N}^\mathbb{N}\to\mathbb{N}$ such that
\begin{gather*}
\forall m\in\mathbb{N}\ \forall k\in\mathbb{N}\ \forall g:\mathbb{N}\to\mathbb{N}\ \exists n \le \Phi(m,k,g)\\
\left( \PP[\forall i,j\in [n;n+g(n)]\left(\vert X_{i} - X_j\vert \le 2^{-k}\right)]\geq 1-2^{-m}\right).
\end{gather*}
We call such a modulus $\Phi$ a \emph{rate of metastable uniform convergence for $X_n$}, which again, by the aforementioned theorem, can be extracted from large classes of proofs. This notion, as far as we know, first appears in the work of Avigad, Gerhardy and Towsner \cite{AvigadGerhardyTowsner2010} on applications of proof mining to ergodic theory and was later more systematically synthesized in the work of Avigad, Dean and Rute \cite{AvigadDeanRute2012}. 

In particular, the work \cite{AvigadDeanRute2012} is concerned with the relation that rates of metastable uniform convergence have with rates of metastable pointwise convergence. Now, as already discussed in the preceding Section \ref{sec:meta} more generally, but as also immediately apparent from their formulation here, it is clear that a rate of metastable uniform convergence is a rate of metastable pointwise convergence, so only the converse direction is of interest. Now, in the context of probability measures, having a rate of metastable uniform convergence is immediately equivalent to the well-known notion of almost uniform convergence, while having a rate of metastable pointwise convergence is equivalent to pointwise convergence. However, almost sure and almost uniform convergence are equivalent over probability measures by the well-known theorem of Egorov (originally phrased in \cite{Egoroff1911} and later extended by Riesz and Sierpi\'nski), but simple counterexamples show that this equivalence does not extend to probability contents.

Nevertheless, these two quantitative notions of metastable pointwise and metastable uniform convergence remain equivalent over probability contents, as essentially already implicitly shown in \cite{AvigadDeanRute2012}, but only fully highlighted through a proof-theoretic lens in \cite{NeriPischke2025}. In particular, the work \cite{AvigadDeanRute2012} provides a quantitative version of Egorov's theorem on the level of these two metastable notions by means of an explicitly constructed functional which transforms a modulus of metastable pointwise convergence into a modulus of metastable uniform convergence. However, this functional has high computational complexity (bar-recursive in this case), reflecting the complexity of the proof of Egorov's theorem, as also justified from a logical perspective in \cite{NeriPischke2025}. So, the proof-theoretic methodology furnishes the two notions of almost sure and almost uniform convergence with uniform quantitative variants for which a variant of Egorov's theorem can be retained for probability contents. It is exactly this result of \cite{AvigadDeanRute2012} that we abstracted into a general relationship between problems of the form \ref{problem} and \ref{uniformProblem} in the context of measurable and suitably monotone $\Pi^0_1$-matrices in Section \ref{sec:meta}.

Lastly, we want to stress that a rate of almost sure convergence, as described above, is also a direct realiser of the formulation of almost uniform convergence as given above. Therefore, although almost sure convergence and almost uniform convergence are distinct notions for probability contents, if one can establish that a sequence of random variables converges almost surely via a semi-constructive proof as formulated by the systems used in this paper, then one can not only extract a rate but also derive the qualitative result that the sequence converges almost uniformly in the context of probability contents.

Despite the presence of this high-complexity translation relating metastable uniform convergence and metastable pointwise convergence, essentially all quantitative convergence results for stochastic processes obtained via proof mining that do not yield a rate of almost sure convergence already natively provide the stronger rate of metastable uniform convergence. In particular, these results give rise to such rates directly and with low complexity, rather than first establishing a rate of metastable pointwise convergence and then appealing to the translation from \cite{AvigadDeanRute2012}. This seems to be essentially because the respective proofs of the convergence results could be immediately formalized with the convergence explicitly phrased in the form 
\[
\forall m^0\forall k^0\forall g^1\exists n^0\left(\PP[ \forall i^0,j^0\in [n;n+g(n)]\left(\vert X_{i}(\varpi)-X_j(\varpi)\vert \leq_\mathbb{R} 2^{-k}\right) ]\geq 1-2^{-m}\right).
\]
Examples of this arise in the context of general stochastic processes, martingales and stochastic optimization \cite{NeriPowell2025, NeriPowell2024}, laws of large numbers \cite{Neri2023,Neri2025a}, and ergodic theory in finitely additive probability spaces \cite{Neri2025b}. In the latter case, the general considerations on quantitative modes of convergence for random variables, as obtained via proof mining in the present paper, have been instrumental in deriving a new notion of almost sure convergence for probability contents. Indeed, there has so far been only one proof-mining application to stochastic processes that directly yields the weaker rate of metastable pointwise convergence, which is given in the recent work \cite{NeriPischkePowell2026} on stochastic optimization already mentioned previously for its peculiar features regarding almost sure infinitely often statements.

In all these other cases, not only are these rates $\Phi$ of metastable uniform convergence of low complexity but they are in particular often of the special form
\[
\Phi(m,k,g):=\tilde{g}^{(\phi(m,k))}(0)
\]
for $\tilde{g}(n):=n+g(n)$ and another function $\phi(m,k)$ (commonly itself also a simple function), that is they are \emph{learnable}, following the terminology of the work of Kohlenbach and Safarik \cite{KohlenbachSafarik2014} (which actually studies a much broader concept, in a deterministic setting however).\footnote{In general, even the notion of a rate of metastable uniform/pointwise convergence can be related to as a notion of generalized learnability as discussed in \cite{NeriPischkePowell2025b}.} Indeed, it is shown in \cite{KohlenbachSafarik2014} that under a restriction on the amount of law-of-excluded-middle in a proof, one can guarantee the extractability of learnable rates of metastability for deterministic sequences, and we deem it highly likely that this result extends to the present probabilistic context, so that results such as the above can be a priori guaranteed by means of our present probabilistic metatheorems together with a restriction on the amount of law-of-excluded-middle in a proof. We, however, leave this for future work.

Lastly, we just want to remark that statements such as
\[
\forall k^0\left(\PP[\forall g^1\exists n^0\left( \vert X_{n+g(n)}(\varpi)-X_n(\varpi)\vert \leq_\mathbb{R} 2^{-k} \right)]\geq 1\right)
\]
or even $\PP[\forall k^0\forall g^1\exists n^0\left( \vert X_{n+g(n)}(\varpi)-X_n(\varpi)\vert \leq_\mathbb{R} 2^{-k} \right)]\geq 1$, in the form of \ref{strongProblem}, have never been observed in practice, so that we do not expand on them here.

\subsection{Finite fluctuations}\label{sec:FLUC}

The final important property we want to discuss is the property that a stochastic process $(X_n)_{n\in\mathbb{N}}$ has \emph{finite fluctuations almost surely}, that is that
\[
\forall k\in\mathbb{N}\left( \PP(J_{k}(X_n)<\infty)=1\right),
\]
where $J_{k}(X_n)$ is the total number of $2^{-k}$-fluctuations that the process $(X_n)_{n\in\mathbb{N}}$ experiences, that is $J_{k}(X_n(\varpi)) := \lim_{N\to\infty}J_{N,k}(X_n(\varpi))$ where $J_{N,k}(X_n)$ is the number of $2^{-k}$-fluctuations that the process $(X_n)_{n\in\mathbb{N}}$ experiences during the first $N$ elements, i.e.\ the maximal $n\in\mathbb{N}$ such that there are
\[
i_1<j_1\leq i_2<j_2\leq\dots\leq i_n<j_n<N \text{ with }\vert X_{i_l}(\varpi)-X_{j_l}(\varpi)\vert> 2^{-k},
\]
for all $l=1,\dots,n$. Fluctuations are a central tool in the study of stochastic processes and in particular their quantitative behaviour. In particular, they have featured in quite a few case studies on proof mining and probability theory, notably \cite{NeriPischkePowell2026,NeriPowell2025} (see also \cite{AvigadRute2015} and \cite{Neri2025b}). In fact, having finite fluctuations is (non-effectively) equivalent to the process converging almost-surely (see e.g.\ the discussion in \cite{NeriPowell2025}). However, the quantitative property of having a bound on the number of fluctuations, that is a so-called \emph{modulus of finite fluctuations almost surely} $\phi:\mathbb{N}^2\to\mathbb{N}$ with the property that
\[
\forall m\in\mathbb{N}\forall k\in\mathbb{N}\left( \PP(J_{k}(X_n)\leq\phi(m,k))\geq 1-2^{-m}\right),
\]
falls, as a quantitative property, strictly between having a rate of almost-sure convergence and having a learnable rate of metastable pointwise convergence, which is in fact already true for deterministic sequences (see \cite{KohlenbachSafarik2014}). We refer to \cite{KohlenbachSafarik2014} for further discussions on this and related notions in the deterministic case and to \cite{NeriPowell2025} for the stochastic case.

The question when the extractability of moduli of finite fluctuations almost surely from convergence proofs can be guaranteed is quite subtle, as already discussed at length in the work of Kohlenbach and Safarik \cite{KohlenbachSafarik2014} for the deterministic case. Indeed, it seems difficult to find any a priori guarantee for this, and \cite{KohlenbachSafarik2014} only provides a type of a posteriori condition where, if one obtains a learnable rate of metastability that satisfies a certain so-called gap condition, then such a rate can be converted to a modulus of finite fluctuations. However, as far as is known, this gap condition cannot be guaranteed a priori based on additional structure on the proof, such as restricted uses of the law-of-excluded-middle.

The circumstances in the case studies mentioned above are, however, more specific. On a closer look, we find that these case studies actually all analyse proofs where the property of having finite fluctuations is \emph{directly} derived from an associated strong inequality bounding the number of crossings for the stochastic process in question (see e.g.\ Theorem 5.1 in \cite{NeriPowell2025} and generally the exposition therein for further information on crossing inequalities). While the resulting proof of the almost-sure convergence of the process is not constructive, as having finite fluctuations almost surely only implies almost sure convergence non-effectively, this derivation of the property of finite fluctuations is in these cases always semi-constructive and can be analysed on its own, which provides a priori guarantees for the existence of modulus of finite fluctuations almost surely in these cases.

Indeed, we can formally recognize these cases as follows: Adapting Definition 2.2 from \cite{KohlenbachSafarik2014}, we write
\[
J^X_{k,\varpi}(n,i,j) \;:\equiv\; \begin{cases}\mathrm{lh}(i)=_0\mathrm{lh}(j)=_0n \\ \qquad \land \; \forall l<_0n( i_l<_0j_l)\\
\qquad\land \; \forall l<_0n-1( j_l\leq_0 i_{l+1}) \\ \qquad \land \; \forall l<_0n( \vert X_{i_l}(\varpi)-X_{j_l}(\varpi)\vert>_\mathbb{R}2^{-k}),\end{cases}
\]
where $\mathrm{lh}(i)$ and $\mathrm{lh}(j)$ represent the length of the finite sequence coded by $i$ and $j$ (under some canonical coding of finite sequences) and $i_l,j_l$ access their respective elements at index $l$. Therefore, $J^X_{k,\varpi}(n,i,j)$ formally expresses the property that $(X_n(\varpi))_{n\in\mathbb{N}}$ has $n$-many $2^{-k}$-fluctuations with indices coded by $i$ and $j$.  We write 
\[
J^X_{k,\varpi}\leq b \;:\equiv\; \forall n>_0 b\forall i^0\forall j^0 \neg J^X_{k,\varpi}(n,i,j),
\]
expressing that $(X_n(\varpi))_{n\in\mathbb{N}}$ has $\leq b$-many $2^{-k}$-fluctuations. That $(X_n(\varpi))_{n\in\mathbb{N}}$ has finite fluctuations can now be formally expressed via 
\[
\forall k^0\exists b^0\left( J^X_{k,\varpi}\leq b\right),
\]
so that the resulting property of having finite fluctuations almost surely translates to
\[
\forall k^0\left(\PP[\exists b^0\left( J^X_{k}\leq b\right)]\geq 1\right),
\]
that is a statement of type \ref{problem}. By Theorem \ref{thrm:pulloutpi2-constUniform}, from a semi-constructive proof of this (which is a necessary requirement as $J^X_{k}\leq b$ is, essentially, universal), we are then be able to extract a modulus $\phi:\mathbb{N}^2\to\mathbb{N}$ such that
\[
\forall m\in\mathbb{N}\forall k\in\mathbb{N}\left(\PP\left[\bigcup_{b\leq\phi(m,k)}\left( J^X_{k}\leq b\right)\right]\geq 1-2^{-m}\right).
\]
Noting that using the monotonicity of $J^X_k\leq b$ the above is equivalent to
\[
\forall m\in\mathbb{N}\forall k\in\mathbb{N}\left(\PP\left[J^X_{k}\leq \phi(m,k)\right]\geq 1-2^{-m}\right),
\]
this directly represents that $\phi$ is a modulus of finite fluctuations almost surely, as defined above. It is exactly such analyses of semi-constructive proofs of this property that give rise to the moduli of finite fluctuations almost surely constructed in the case studies mentioned previously, giving the first systematic proof-theoretic explanation thereof. \\

\noindent{\textbf{Acknowledgments:}} The authors are grateful to Ulrich Kohlenbach for numerous helpful comments on various drafts of this paper. In particular, he provided crucial insights on the treatment of uniform boundedness principles in higher types and of contra-collection principles in the context of the monotone functional interpretation. The authors also thank Thomas Powell for valuable remarks on a draft of the paper.

\bibliographystyle{plain}
\bibliography{ref}

\end{document}